\newtheorem{Thm}{Theorem}[section]
\newtheorem{Lem}{Lemma}[section]
\newtheorem{Prop}{Proposition}[section]
\newtheorem{Rem}{Remark}[section]
\def\XXint#1#2#3{{\setbox0=\hbox{$#1{#2#3}{\int}$}
\vcenter{\hbox{$#2#3$}}\kern-.5\wd0}}
\DeclareMathOperator*{\osc}{\rm osc}
\title{\LARGE \bf Blow up analysis for Boltzmann-Poisson equation in Onsager's theory for point vortices with multi-intensities
}
\author{Takashi Suzuki\footnote{Center for Mathematical Modeling and Data Science, Osaka University, Osaka 560-8531, Japan. (E-mail: suzuki@sigmath.es.osaka-u.ac.jp)} \and Yohei Toyota\footnote{Division of Mathematical Science, Department of Systems Innovation, Graduate School of Engineering Science, Osaka University, Osaka 560-8531, Japan. (E-mail: y-toyota@sigmath.es.osaka-u.ac.jp)} }
\date{}
\begin{document}

\maketitle
\begin{abstract}
In this paper we consider the minimizing sequence for some energy functional of an elliptic equation associated with the mean field limit of the point vortex distribution one-sided Borel probability measure. If such a sequence blows up, we derive some estimate which is related to the behavior of solution near the blow-up point. Moreover, we study the two-intensities case to consider the sufficient condition for this estimate. Our main results are new for the standard mean field equation as well.

\end{abstract}

\section{Introduction}
\noindent
Motivated by several mean field equations recently derived in the context of Onsager's statistical mechanics description of turbulence \cite{key040}, we consider the {Boltzmann}-Poisson equation:

\begin{equation} \label{eq100}
-\Delta v= \lambda \int_{I_+} \frac{\alpha e^{\alpha v}}{\int_{\Omega} e^{\alpha v} dx} \mathcal P(d\alpha)\quad {\rm in} \hspace{2mm}\Omega, \quad v=0 \hspace{2mm}{\rm on}\hspace{2mm} \partial\Omega,
\end{equation}
where $\Omega \subset \mathbb R^2$ is  a smooth bounded domain, $v$ denotes the stream function, $\lambda>0$ is a constant related to the inverse temperature and $\mathcal P(d\alpha)$ is a Borel probability measure on $I_+=[0, +1]$ denoting the distribution of the circulations. 
A formal derivation of (\ref{eq100}) is provided in \cite{key016, key052}.\

If $\mathcal P(d\alpha)=\delta_{+1}(d\alpha)$, corresponding to the case where all vortices have the same intensity and orientation, equation (\ref{eq100}) reduces to the Liouville type equation
\begin{equation} \label{eq102}
-\Delta v= \lambda \frac{e^{v}}{\int_{\Omega} e^{v} dx} \quad {\rm in} \hspace{2mm}\Omega, \quad v=0 \hspace{2mm}{\rm on}\hspace{2mm} \partial\Omega.
\end{equation}
\noindent
Equation (\ref{eq102}) is mathematically justified by the minimizing free energy method in the canonical formulation \cite{key04, key028}, and its mathematical analysis has revealed the quantized blow-up mechanism of sequences of solutions, see, e.g., \cite{key00, key032, key034, key060, key062, key064}.\

Especially, the Y. Y. Li type estimate which is the behavior of blow-up solutions for (\ref{eq102}) near the blow-up points has been studied \cite{key022, key030}. Let $\Omega$ be a unit ball and $(\lambda_k, v_k)$ satisfy (\ref{eq102}) without boundary condition and 

\begin{equation} \label{eq104}
\lambda_k \to \lambda_0 \geq 0, \quad \| v_k \|_{\infty}=v_k(x_k) \to +\infty, \quad x_k \to 0 \in \Omega
\end{equation}
as $k \to +\infty$ where $x_k$ is the maximizer of $v_k$ and $0$ is the only blow-up point of $v_k$. Then the following result holds:

\begin{Thm} {\rm(\cite{key022}, Theorem 0.3)} \label{th100}
Under the blow-up case (\ref{eq104}), {suppose that} there exists a constant $C>0$ such that
\begin{equation} \label{eq106}
\max_{\partial \Omega} v_k -\min_{\partial \Omega} v_k \leq C.
\end{equation}
Then it holds that
\begin{equation} \label{eq108}
v_k(x)-v_k(x_k)=-2\log\Bigg(1+\frac{\lambda_k}{8} \frac{e^{v_k(x_k)}}{\int_{\Omega} e^{v_k}}|x-x_k|^2\Bigg)+O(1)
\end{equation}
as $k \to \infty$ uniformly $x \in B_r(0)$ with some $0<r<1$.
\end{Thm}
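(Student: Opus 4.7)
The plan is to combine a rescaling around the concentration point $x_k$ with a Green-function far-field analysis, linked together by the boundary-oscillation hypothesis (\ref{eq106}). Define the concentration length
\[
\delta_k^{-2} := \frac{\lambda_k e^{v_k(x_k)}}{\int_\Omega e^{v_k}\,dx},
\]
so $\delta_k \to 0$ by (\ref{eq104}), and rescale $w_k(y) := v_k(x_k + \delta_k y) - v_k(x_k)$ on $B_{R_k}(0)$ with $R_k := (1-|x_k|)/\delta_k \to \infty$. A direct computation yields $-\Delta w_k = e^{w_k}$ with $w_k(0) = 0 = \max w_k$, and the target (\ref{eq108}) is equivalent to $w_k(y) = -2\log(1 + |y|^2/8) + O(1)$ uniformly on $B_{r/\delta_k}(0)$.

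First I would extract the bubble profile. Since $0$ is the only blow-up point, $e^{w_k}$ is locally uniformly bounded, so elliptic regularity and a diagonal extraction produce a subsequential limit $w_\infty \in C^2(\mathbb{R}^2)$ satisfying $-\Delta w_\infty = e^{w_\infty}$, $w_\infty \leq w_\infty(0) = 0$, and $\int_{\mathbb{R}^2} e^{w_\infty}\,dy < \infty$. The Chen--Li classification then forces $w_\infty(y) = -2\log(1 + |y|^2/8)$, which gives (\ref{eq108}) on the bubble scale $|x - x_k| \leq R\delta_k$ for any fixed $R$.

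The heart of the argument is to propagate this asymptotic to a fixed ball $B_r(0)$. Decompose $v_k = G_\Omega[f_k] + h_k$, where $f_k := \lambda_k e^{v_k}/\int_\Omega e^{v_k}$, $G_\Omega$ is the Dirichlet Green operator, and $h_k$ is harmonic with $h_k = v_k$ on $\partial\Omega$. Hypothesis (\ref{eq106}) and the maximum principle yield $h_k(x) - h_k(x_k) = O(1)$ on $B_r(0)$. The classification step above, together with the mass identity $\int_\Omega f_k = \lambda_k$ and the single blow-up point assumption, forces $\lambda_0 = 8\pi$ and $f_k \rightharpoonup 8\pi\delta_0$ weakly. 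The standard Green-function expansion then gives, for $R\delta_k \leq |x - x_k| \leq r$,
\[
v_k(x) - v_k(x_k) = -4\log\frac{|x - x_k|}{\delta_k} + O(1),
\]
which matches the large-$|y|$ expansion $-2\log(1 + |y|^2/8) = -4\log|y| + 2\log 8 + o(1)$ of the bubble.

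The main obstacle will be making the $O(1)$ remainder genuinely uniform across the transition annulus $c\delta_k \leq |x - x_k| \leq R\delta_k$ in the limits $R \to \infty$ and $c \to 0$, and in particular independent of the subsequence used to extract $w_\infty$. Here (\ref{eq106}) is indispensable: without it, $h_k$ could develop unbounded oscillation and destroy the gluing. The plan is to close this gap by a Brezis--Li--Shafrir sup$+$inf type inequality applied on dyadic annuli around $x_k$, combined with a contradiction argument against the Chen--Li classification, to obtain a uniform bound on the quantity $v_k(x) - v_k(x_k) + 2\log\bigl(1 + \delta_k^{-2}|x - x_k|^2/8\bigr)$ on $B_r(0)$, which is precisely the content of (\ref{eq108}).
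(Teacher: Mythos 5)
The paper does not prove Theorem \ref{th100}; it quotes it from Li \cite{key022} and notes two known proofs: Li's, via conformal transformation and moving planes, and Lin's \cite{key030}, via bubble-mass control, the Chen--Li classification \cite{key08}, and a Green-function gluing. Your sketch follows the Lin route, which is also the template the paper adapts to prove its own generalization Theorem \ref{th102} in Section 3, so the overall strategy --- rescale at $x_k$ to extract the Chen--Li profile, decompose $v_k = G_\Omega[f_k] + h_k$ with (\ref{eq106}) controlling the harmonic part $h_k$, then match near and far expansions --- is the right one.

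Two steps, however, are only named and not carried out, and they are exactly where the difficulty lies. First, the assertion that ``the mass identity $\int_\Omega f_k = \lambda_k$ and the single blow-up point assumption forces $\lambda_0 = 8\pi$ and $f_k \rightharpoonup 8\pi\delta_0$'' is not a proof: the tautology $\int_\Omega f_k = \lambda_k$ is not the mass identity the paper means. What is actually needed is (a) residual vanishing and (b) that the local mass $m(0)$ equals the mass $8\pi$ of the scaling limit, which is the nontrivial identity (\ref{eq120}); the paper attributes this, for the scalar equation, to Chen--Li \cite{key08} together with Lin's boundary-oscillation argument (see Remark \ref{rem104}), and in its own Theorem \ref{th102} simply assumes it as a hypothesis. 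Chen--Li only gives the bubble $8\pi$; ruling out residual and intermediate mass is separate work. Second, your proposed closure of the neck estimate via a sup$+$inf inequality on dyadic annuli is a plausible alternative tool but a different one from what the Lin/paper argument does: there the neck is handled by first proving the far-field pointwise bound (Lemma \ref{lem304}) with defect control, the moment bound $\int_{B_{R_0\sigma_k^{-1}}} \tilde{f}_k(y)\,|\log|y||\,dy = O(1)$ (Lemma \ref{lem306}), and the sharp outer asymptotics (Lemma \ref{lem308}), and then sandwiching $\tilde w_k$ between explicit radial barriers $\tilde w_k^\pm$ by the maximum principle on the annulus $A_k$. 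Your sketch names sup$+$inf and a contradiction argument but does not execute either; without filling in both of these steps it does not yet yield a uniform $O(1)$ remainder across the full ball $B_r(0)$.
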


\begin{Rem} \label{rem100}
We can understand (\ref{eq106}) as boundary condition in Theorem \ref{th100} and there are no need to suppose the zero Dirichlet boundary condition for Theorem \ref{th100}.  
\end{Rem}

Y. Y. Li type estimate of (\ref{eq108}) is valid for the computation of the Leray-Schauder degree for (\ref{eq102}), asymptotic non-degeneracy of multi-point blowup solutions to {the Liouville Gel'fand problem} and the Trudinger-Moser inequality with the extremal case, see \cite{key022, key036, key058}.\

It is known that there are two proofs for Theorem \ref{th100}. The first one which is the original way of Y. Y. Li, is the combination with some conformal transformation and the moving plane argument \cite{key022}. The other one is the argument of C. S. Lin \cite{key030}. In \cite{key030}, we can control the mass of bubble in the quantized blow-up argument thanks to the boundary condition (\ref{eq106}). By such a information of mass and a result of \cite{key08}, we obtain the {\it mass identity} which is described precisely later, and this identity plays an essential role in the proof of Theorem \ref{th100}. \

Comparing with the case $\mathcal P(d\alpha)=\delta_{+1}(d\alpha)$, however, there are no works of describing the Y. Y. Li type estimate for mean field equation in the multi-intensities case. Our aim in this paper is to derive the variant of Y. Y. Li type estimate in the multi-intensities case (\ref{eq100}). To {achieve this}, we shall employ the argument of \cite{key058}.\
Here, we introduce some notations and assumptions to describe our results. \\
Setting
\begin{equation*}
J_{\lambda}(v)=\frac{1}{2}\|\nabla v\|_2^2-\lambda\int_{I_+}\log\Big(\int_{\Omega} e^{\alpha v} dx \Big)\mathcal P(d\alpha), \quad v \in H_0^1(\Omega),
\end{equation*}
then equation (\ref{eq100}) is the Euler-Lagrange equation of this functional.\\
The extremal value of $\lambda$ for $\inf_{v \in H_0^1(\Omega)} J_{\lambda} (v)>-\infty$ is defined by
\begin{align}
\overline\lambda:&=\sup\Big\{\lambda>0 \mid \inf_{v\in H_0^1(\Omega)} J_{\lambda}(v)>-\infty\Big\}. \label{eq124'}
\end{align}
This extremal value is actually given by \cite{key044}, that is,
\begin{equation}\label{eq126}
\overline \lambda = \inf \Biggr\{ \frac{8\pi \mathcal P(K)}{\Big(\int_{K} \alpha \mathcal P(d\alpha)\Big)^2} \mid K \subset  \rm supp \mathcal P \Biggl\},
\end{equation}
where supp $\mathcal P$=$\{\alpha \in I_+ \mid \mathcal P(N)>0$ for any open neighborhood $N$ of $\alpha$ $\}$.\\
Then it holds that
\begin{align*}
\lambda<\overline \lambda \quad &\Rightarrow \inf_{v \in H_0^1(\Omega)} J_{\lambda} (v)>-\infty,\\
\lambda>\overline \lambda \quad &\Rightarrow \inf_{v \in H_0^1(\Omega)} J_{\lambda} (v)=-\infty.
\end{align*}
Therefore, given $\lambda_k \uparrow \overline \lambda$, we have a minimizer $v_k \in H_0^1(\Omega)$ of ${J_{\lambda_k}}$, and $(\lambda_k, v_k)$ satisfy (\ref{eq100}).  For the solution sequence to (\ref{eq100}), the following Brezis-Merle type blow-up alternatives holds \cite{key034, key046, key049}:

\begin{Prop}  \label{prop100}
Let $(\lambda_k, v_k)$ be a solution sequence of (\ref{eq100}) with {$\lambda_k >0$ and $\lambda_k \to \lambda_0$}. Assume that
\begin{equation} \label{eq112}
S \cap\partial\Omega=\emptyset
\end{equation}
holds, where $S=\{x_0 \in \overline\Omega \mid$ there exists $x_k \in \Omega$ such that ${x_k \to x_0}$ and $v_k(x_k) \to \infty\}$. Then, passing to a subsequence, we have the following alternatives.\\
(I) Compactness: $\limsup_{k \to \infty} \| v_k \|_{\infty}<+\infty$, that is, $S=\emptyset$.\

Then, there exists $v \in H_0^1(\Omega)$ such that $v_k \to v$ in $H_0^1(\Omega)$ and $v$ is a solution of (\ref{eq100}).\\
(II) Concentration: $\limsup_{k \to \infty} \| v_k \|_{\infty}=+\infty$, that is, $S \not= \emptyset$.\

Then, $S$ is finite and there exists $0\leq s(x) \in L^1(\Omega)\cap L_{loc}^{\infty}(\Omega\setminus S)$ such that
\begin{equation} \label{eq114}
\mu_k(dx) \equiv \lambda_k \int_{I_+} \frac{\alpha e^{\alpha v}}{\int_{\Omega} e^{\alpha v}dx} \mathcal P(d\alpha) dx \overset{*}{\rightharpoonup} s(x)dx+\sum_{x_0 \in S}m (x_0)\delta_{x_0}(dx) \quad {\rm in} \hspace{2mm} \mathcal M(\overline\Omega),
\end{equation}
with $m(x_0) \geq 4\pi$ where $\delta_{x_0}$ denotes the Dirac measure centered at $x_0$ and $\mathcal M(\overline\Omega)$ is the space of measures identified with the dual space of $C_0(\Omega)$.
\end{Prop}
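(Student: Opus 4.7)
The plan is to extend the Brezis--Merle blow-up alternative \cite{key034, key046, key049} to the $\mathcal P$-averaged right-hand side of (\ref{eq100}). First I note that the total mass of $\mu_k$ equals $\lambda_k\int_{I_+}\alpha\,\mathcal P(d\alpha)\leq\lambda_k$, so $\{\mu_k\}$ is uniformly bounded in $\mathcal M(\overline\Omega)$ and extracts a weak-$*$ convergent subsequence $\mu_k\overset{*}{\rightharpoonup}\mu=s(x)dx+\mu_{\rm sing}$. I then define the candidate concentration set
\[
\Sigma=\{x_0\in\overline\Omega\mid \mu(\{x_0\})\geq 4\pi\},
\]
which is finite since $\mu$ is a finite measure, and aim to prove $S=\Sigma$; combined with the hypothesis $S\cap\partial\Omega=\emptyset$, this will yield (\ref{eq114}) with $m(x_0)=\mu(\{x_0\})\geq 4\pi$ built in.

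The main step is the $\varepsilon$-regularity direction $\overline\Omega\setminus\Sigma\subset \overline\Omega\setminus S$. For $x_0\notin\Sigma$, one selects a small ball $B_r(x_0)$ so that $\|\mu_k\|_{L^1(B_r(x_0))}<4\pi-\delta$ for large $k$, decomposes $v_k=v_k^{(1)}+v_k^{(2)}$ on $B_r(x_0)$ with $-\Delta v_k^{(1)}=\mu_k$ under zero boundary data and $v_k^{(2)}$ harmonic, applies the Brezis--Merle inequality to get $e^{pv_k^{(1)}}$ uniformly bounded in $L^1(B_r(x_0))$ for some $p>1$, and controls the harmonic part $v_k^{(2)}$ via Green's representation together with the global $L^1$-bound on $\mu_k$ (the Dirichlet condition is used near $\partial\Omega$). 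Elliptic regularity then upgrades this into a uniform $L^\infty$-bound for $v_k$ on $B_{r/2}(x_0)$, so $x_0\notin S$.

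The reverse inclusion $\Sigma\subset S$ is immediate: if $v_k$ is uniformly bounded in $L^\infty$ near $x_0$, then $\mu_k$ has uniformly bounded density there, so the weak-$*$ limit $\mu$ cannot carry an atom at $x_0$. The same local $L^\infty$-bound on $v_k$ outside $S$ yields $s\in L^\infty_{\rm loc}(\Omega\setminus S)$, completing the decomposition in (\ref{eq114}).

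The main obstacle is that the Brezis--Merle machinery applies to a single exponential, whereas the nonlinearity in (\ref{eq100}) is a $\mathcal P$-average of $\alpha e^{\alpha v_k}/\int_\Omega e^{\alpha v_k}dx$ over $\alpha\in[0,1]$. Two structural features close this gap: the bound $\alpha\leq 1$ on $\operatorname{supp}\mathcal P\subset I_+$ makes $e^{v_k}$ pointwise dominate $e^{\alpha v_k}$, while the maximum principle applied to (\ref{eq100}) with its nonnegative right-hand side forces $v_k\geq 0$ and hence $\int_\Omega e^{\alpha v_k}dx\geq|\Omega|$ uniformly in $(k,\alpha)$. Together with Fubini, these let the scalar Brezis--Merle $L^p$-estimate pass under the $\mathcal P$-integral and recover the required bound on the full density.
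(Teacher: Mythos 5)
The paper does not prove Proposition \ref{prop100}: it is stated as a known Brezis--Merle type alternative with reference to \cite{key034, key046, key049}, so there is no in-paper proof to compare against. Your reconstruction follows the standard route those references adapt to the $\mathcal P$-averaged setting, and the structural observations you isolate to close the gap are exactly the right ones: the maximum principle gives $v_k\geq 0$, hence $e^{\alpha v_k}\leq e^{v_k}$ pointwise and $\int_\Omega e^{\alpha v_k}\,dx\geq |\Omega|$ uniformly in $\alpha$ and $k$, so the density of $\mu_k$ is dominated by $(\lambda_k/|\Omega|)e^{v_k}$ and the scalar Brezis--Merle $\varepsilon$-regularity passes under the $\mathcal P$-integral. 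The remaining steps (uniform mass bound and weak-$*$ extraction, Brezis--Merle inequality on the zero-boundary piece $v_k^{(1)}$, Harnack/Green control of the harmonic piece $v_k^{(2)}$, elliptic bootstrap, and the reverse inclusion $\Sigma\subset S$) are correctly sketched and yield the decomposition (\ref{eq114}) with $m(x_0)\geq 4\pi$.
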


\begin{Rem} \label{rem102}
If we apply Proposition \ref{prop100} to the solution $(\lambda_k, v_k)$ of (\ref{eq102}), then it is known that we get the more detail of the blow-up information. For example,  $s(x) \equiv 0$ in $\Omega$, which we call residual vanishing and $m(x_0) \in 8\pi \mathbb N $ for $x_0 \in S$ \cite{key00, key032}.
\end{Rem}

\noindent
{Since the minimizing sequence $(\lambda_k, v_k)$ satisfies (\ref{eq100}), we can apply Proposition \ref{prop100} to it if we get the condition (\ref{eq112}). In general, thanks to a result in \cite{key018}, p.223, (\ref{eq112}) follows for solution sequence to (\ref{eq100}). It is enough to check the following statement:} 

\begin{Lem} \label{lem100}
Let $(\lambda_k, v_k)$ be a solution sequence to (\ref{eq100}) with {$\lambda_k >0$ and $\lambda_k \to \lambda_0$}. There exists a tubular neighborhood $\Omega_{\delta}$ of $\partial\Omega$ and a constant $C>0$ such that $\| {v_k} \|_{L^{\infty}(\Omega_{\delta})} \leq C$ for any $k \in \mathbb N$.
\end{Lem}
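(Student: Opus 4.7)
The plan is to rule out boundary blow-up by combining a de Figueiredo--Lions--Nussbaum moving-plane monotonicity near $\partial \Omega$ with the finiteness of the blow-up set, which follows from the Brezis--Merle covering and the $L^1$ bound obtained by integrating the equation. I would first rewrite (\ref{eq100}) as the semilinear Dirichlet problem $-\Delta v_k = g_k(v_k)$ in $\Omega$, $v_k = 0$ on $\partial\Omega$, where $g_k(t) = \lambda_k \int_{I_+} \alpha e^{\alpha t}/c_{k,\alpha}\, \mathcal P(d\alpha)$ and $c_{k,\alpha} := \int_\Omega e^{\alpha v_k}\, dy$; for each $k$, $g_k$ is positive and strictly increasing in $t$ (the $x$-dependence enters only through the positive constants $c_{k,\alpha}$), so the strong maximum principle yields $v_k > 0$ in $\Omega$. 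Integrating gives $\|g_k(v_k)\|_{L^1(\Omega)} = \lambda_k \int_{I_+} \alpha\, \mathcal P(d\alpha) \leq \lambda_k$, a uniform $L^1$ bound.

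Next, using the $C^2$ regularity of $\partial \Omega$, I would straighten the boundary near each $x_0 \in \partial\Omega$ and compare $v_k$ with its reflection $v_k^\lambda(x', x_n) = v_k(x', 2\lambda - x_n)$ across $\{x_n = \lambda\}$. The difference $w_k = v_k - v_k^\lambda$ solves $-\Delta w_k = c_k(x) w_k$ on the slab $\Sigma_\lambda = \{0 < x_n < \lambda\} \cap B_\rho(x_0)$, with $c_k \geq 0$ by the mean-value theorem and the monotonicity of $g_k$, and $w_k \leq 0$ on $\partial \Sigma_\lambda$ by the Dirichlet data and the reflection geometry. The Alexandrov--Bakelman--Pucci estimate on thin slabs, together with $L^p$ ($p > 1$) control of $c_k$ supplied by Brezis--Merle's exponential integrability (a uniform consequence of the $L^1$ bound above), then yields $w_k \leq 0$ in $\Sigma_\lambda$ for every $\lambda \in (0, \lambda_0)$, with $\lambda_0 > 0$ depending only on $\Omega$ and $\sup_k \lambda_k$. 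Thus $v_k$ is monotone non-decreasing in the inward normal direction throughout the tubular neighborhood $\Omega_{\lambda_0}$.

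Assume now for contradiction that $\|v_k\|_{L^\infty(\Omega_\delta)} \to \infty$ for every $\delta > 0$. Extracting $x_k \in \Omega$ with $d(x_k, \partial \Omega) \to 0$, $v_k(x_k) \to \infty$, and, after a subsequence, $x_k \to x_0 \in \partial \Omega$, the monotonicity above gives, for every $\delta' \in (0, \lambda_0)$,
\[
v_k\bigl(z_k + \delta'\, \nu(z_k)\bigr) \geq v_k(x_k) \to +\infty,
\]
where $z_k \in \partial \Omega$ is the closest boundary point to $x_k$ and $\nu(z)$ denotes the inward unit normal; moreover $z_k + \delta'\, \nu(z_k) \to x_0 + \delta'\, \nu(x_0) \in \Omega$. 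Varying $\delta' \in (0, \lambda_0)$ produces an uncountable family of distinct interior blow-up points of $\{v_k\}$ along the inward normal segment at $x_0$, contradicting the finiteness of the blow-up set that follows from the Brezis--Merle covering lemma together with $\|g_k(v_k)\|_{L^1} \leq C$.

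The main obstacle is ensuring that the moving-plane depth $\lambda_0$ is uniform in $k$, since $g_k$ is not uniformly Lipschitz along the sequence. This is overcome by the two-dimensional ABP estimate combined with Brezis--Merle's exponential integrability, which upgrades the $L^1$ bound on $g_k(v_k)$ to the $L^p$ control on $c_k = g_k'(\xi_k)$ needed to close the thin-slab comparison argument.
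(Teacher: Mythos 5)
Your overall plan (moving planes near $\partial\Omega$ following Gidas--Ni--Nirenberg, then a contradiction with the finiteness of the blow-up set) is the same as the paper's, which invokes \cite{key018}, p.~223 and \cite{key050}, Lemma~2.5. However, the step you singled out as ``the main obstacle'' --- obtaining a $k$-uniform moving-plane depth $\lambda_0$ --- is not correctly resolved by the route you propose, and this is a genuine gap.

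You claim that the $L^1$ bound on $g_k(v_k)$ together with Brezis--Merle exponential integrability supplies a uniform $L^p$ ($p>1$) bound on the zeroth-order coefficient $c_k=g_k'(\xi_k)$, so that the small-measure ABP maximum principle applies on a $k$-independent thin slab. But Brezis--Merle gives $\int_\Omega e^{\delta |v_k|}\le C$ only for $\delta<4\pi/M_k$ with $M_k=\|g_k(v_k)\|_{L^1}=\lambda_k\int_{I_+}\alpha\,\mathcal P(d\alpha)$. In the regime of interest $\lambda_k\to\overline\lambda$, and one typically has $\overline\lambda\int_{I_+}\alpha\,\mathcal P(d\alpha)\ge 8\pi$ (indeed $\overline\lambda=8\pi$ and $\int\alpha\,\mathcal P=1$ already in the Liouville case $\mathcal P=\delta_1$), so the admissible $\delta$ is strictly below $1$. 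Hence you cannot even conclude $e^{v_k}\in L^1$ uniformly, let alone the $L^p$ ($p>1$) control on $e^{\alpha v_k}$ needed to bound $\|c_k\|_{L^p}$. Localizing Brezis--Merle to a tubular neighborhood would require knowing a priori that $\|g_k(v_k)\|_{L^1(\Omega_{3\delta})}$ is small, but that smallness is precisely the statement under proof, so the argument is circular. The mechanism that actually gives a $k$-independent $\lambda_0$ is the geometric boundary moving-plane theorem of Gidas--Ni--Nirenberg and de~Figueiredo--Lions--Nussbaum: since the nonlinearity $g_k(t)$ is $x$-independent, non-negative and non-decreasing, and $v_k>0$ in $\Omega$, $v_k=0$ on $\partial\Omega$, the plane can be moved up to a depth determined only by the geometry of $\partial\Omega$, with no integrability hypothesis on the linearized coefficient. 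Replacing your ABP/Brezis--Merle step by a direct appeal to this classical boundary lemma (which is exactly what the paper does via \cite{key018} and \cite{key050}) closes the argument; the remainder of your proof --- the $L^1$ bound from integrating the equation, the positivity via the strong maximum principle, and the contradiction from producing a non-degenerate segment of interior blow-up points --- is sound and matches the paper's structure.
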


\noindent
{The proof of Lemma \ref{lem100} is almost the same as in \cite{key050}, Lemma 2.5. Therefore, we have
\begin{equation} \label{eq116}
S \cap\partial\Omega=\emptyset, \quad \# S<\infty 
\end{equation}
for minimizing sequence $(\lambda_k, v_k)$. In the following, we consider the minimizing sequence $(\lambda_k, v_k)$ for ${J_{\lambda_k}}$ in the Concentration case, that is,
\begin{equation} \label{eq110}
\lambda_k \to \overline\lambda, \quad \| v_k \|_{\infty}=v_k(x_k) \to \infty \quad as \hspace{2mm} k \to \infty
\end{equation}
where $x_k$ is the maximizer of $v_k$. Indeed, if $\mathcal P$ is the one-intensity or two-intensity case and $\Omega$ is a ball then (\ref{eq110}) is justified \cite{key04, key048}.} By (\ref{eq116}), up to a subsequence, $x_k \to x_0 \in \Omega$.\

Next, we define
\begin{align*}
w_{k, \alpha}(x)&:=\alpha v_k(x+x_k)-\log \int_{\Omega} e^{\alpha v_k}, \quad k\in \mathbb N, \quad \alpha \in I_{+}\setminus\{0\} \hspace{2mm} {\rm and}\\
w_k(x)&:=w_{k, 1}(x).
\end{align*}
Then we have 
\begin{equation} \label{eq118}
-\Delta w_k=\lambda_k \int_{I_+} \alpha e^{w_{k, \alpha}}\mathcal P(d\alpha) \quad {\rm in} \hspace{2mm} \Omega, \quad \int_{\Omega} e^{w_{k, \alpha}}=1,
\end{equation}
and we shall show that for $\alpha \in I_+ \setminus\{0\}$,
\begin{equation*} 
w_k(0) \geq w_{k, \alpha}(0) \to +\infty, \quad k \to \infty.
\end{equation*}
Furthermore, setting
\begin{equation*}
\tilde{w}_{k, \alpha}(x):=w_{k, \alpha}(\sigma_k x)+2\log \sigma_k, \quad \sigma_k=e^{-w_k(0)/2}\to 0, \quad \tilde{w}_k:=\tilde{w}_{k, 1}
\end{equation*}
then, we obtain 
\begin{equation*}
-\Delta \tilde{w}_k =\tilde{f}_k, \quad \tilde{w}_k(x) \leq \tilde{w}_k(0)=0 \quad {\rm in} \hspace{2mm} B_{R_0{\sigma_k}^{-1}},
\end{equation*}
where $\tilde{f}_k:= \lambda_k \int_{I_+} \alpha e^{\tilde{w}_{k, \beta}}\mathcal P(d\alpha)$, $4R_0=dist(x_0, \partial\Omega)$. By {elliptic regularity arguments}, we can show that there exists $\tilde{w}$, $\tilde{f}$ $\in C^2(\mathbb R^2)$ such that
\begin{equation*}
\tilde{w}_k \to \tilde{w}, \quad \tilde{f}_k \to \tilde{f} \quad {\rm in} \hspace{2mm} C^2_{loc}(\mathbb R^2),
\end{equation*}
and 
\begin{equation*} 
-\Delta \tilde{w}=\tilde{f}\not\equiv0, \quad \tilde{w} \leq \tilde{w}(0)=0, \quad 0\leq \tilde{f} \leq {\overline\lambda}\int_{I_+} \alpha \mathcal P(d\alpha)\quad {\rm in} \hspace{2mm} \mathbb R^2,
\end{equation*}
\begin{equation*}
\int_{\mathbb R^2} e^{\tilde{w}} \leq1, \quad \int_{\mathbb R^2} \tilde{f} \leq {\overline\lambda}\int_{I_+} \alpha \mathcal P(d\alpha).
\end{equation*}
Then we assume that
\begin{equation} \label{eq120}
\beta_0:=\int_{\mathbb R^2} \tilde{f} dx= m(x_0),
\end{equation}
where $m(x_0)$ {is} as in (\ref{eq114}).
\begin{Rem} \label{rem104}
Since (\ref{eq120}) means that the total mass of scaling limit coincides with the local mass of bubble, we call (\ref{eq120}) mass identity. Indeed, in Theorem \ref{th100}, the both sides of (\ref{eq120}) coincides with $8\pi$ by a result of \cite{key08, key030}. 
\end{Rem}
\noindent
In addition to (\ref{eq110}) and (\ref{eq120}), we also assume 
\begin{equation} \label{eq121'}
{\alpha_{min}>0\quad and} \quad \mathcal P(\{\alpha_{min} \})>0,
\end{equation}
where supp $\mathcal P$=$\{\alpha \in I_+ \mid \mathcal P(N)>0$ for any open neighborhood $N$ of $\alpha \}$ and $\alpha_{min}=\inf_{\alpha \in supp \mathcal P} \alpha$.
Then the variant of Y. Y. Li type estimate holds:

\begin{Thm} \label{th102}
Suppose (\ref{eq110}), (\ref{eq120}), (\ref{eq121'}) and $s(x) \equiv 0$ as in (\ref{eq114}) then it holds that
\begin{equation} \label{eq122}
v_k(x) - v_k(x_k)=-\Big(\frac{\beta_0}{2\pi}+o(1)\Big)\log\Big(1+\Big(\frac{e^{v_k(x_k)}}{\int_{\Omega} e^{v_k}}\Big)^{\frac{1}{2}}|x-x_k|\Big)+O(1)
\end{equation}
as $k \to \infty$ uniformly in $B_{R_0/2}(x_0)$ where $\beta_0=\int_{\mathbb R^2} \tilde{f} (x) dx$.
\end{Thm}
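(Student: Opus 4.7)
The plan is to prove (\ref{eq122}) in two steps: first on the inner region $|x-x_k|\le M\sigma_k$ (with $M$ arbitrary but fixed) and then on the outer region $M\sigma_k\le |x-x_k|\le R_0/2$, and then splice them. The scaling identity $\sigma_k^{-2}=e^{v_k(x_k)}/\int_\Omega e^{v_k}$ and the relation $\tilde{w}_k(y)=v_k(x_k+\sigma_k y)-v_k(x_k)$, both read off from the definitions preceding the theorem, make the two regions cover $B_{R_0/2}(x_0)$.

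For the inner region I would exploit the $C^2_{\rm loc}(\mathbb R^2)$ convergence $\tilde{w}_k\to\tilde{w}$ together with the Brezis--Merle type asymptotic
\[
\tilde{w}(y)=-\frac{\beta_0}{2\pi}\log(1+|y|)+O(1),\qquad y\in\mathbb R^2.
\]
This asymptotic is obtained by writing $\tilde{w}=N+h$, where $N(y)=-\tfrac{1}{2\pi}\int_{\mathbb R^2}\log|y-z|\,\tilde{f}(z)\,dz$ is the Newton potential generated by $\tilde{f}\in L^1\cap L^\infty(\mathbb R^2)$ and $h$ is harmonic on $\mathbb R^2$; the upper bound $\tilde{w}\le 0$ combined with the logarithmic behaviour of $N$ at infinity constrains $h$ to be bounded via the one-sided Liouville theorem for harmonic functions of sublinear growth. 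Substituting $y=\sigma_k^{-1}(x-x_k)$ then yields (\ref{eq122}) on $|x-x_k|\le M\sigma_k$, where $\log(1+\sigma_k^{-1}|x-x_k|)$ is itself $O(1)$.

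For the outer region I would use the Dirichlet Green's function representation $v_k(x)=\int_\Omega G(x,y)\,\mu_k(dy)$, with $\mu_k$ the measure in (\ref{eq114}). The hypotheses $s\equiv 0$, (\ref{eq120}) and (\ref{eq116}) (after possibly shrinking $R_0$ so that $x_0$ is the only blow-up point in $B_{R_0}(x_0)$) yield $\mu_k|_{B_{R_0}(x_0)}\overset{*}{\rightharpoonup}\beta_0\,\delta_{x_0}$ in $\mathcal{M}(\overline{B_{R_0}(x_0)})$. Writing
\[
v_k(x)-v_k(x_k)=\int_\Omega [G(x,y)-G(x_k,y)]\,\mu_k(dy),
\]
I would split at an intermediate scale $r_k$ with $\sigma_k\ll r_k\ll 1$. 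The piece over $\Omega\setminus B_{r_k}(x_k)$ is $O(1)$ because $G$ is smooth off the diagonal and $\mu_k(\Omega\setminus B_{r_k}(x_k))=o(1)$, while after subtracting the regular part of $G$ the interior piece reduces to $-\tfrac{1}{2\pi}\int_{B_{r_k}(x_k)}\log\frac{|x-y|}{|x_k-y|}\,\mu_k(dy)$. A direct evaluation of this logarithmic integral via the rescaling $y=x_k+\sigma_k z$, the convergence $\sigma_k^2\mu_k(x_k+\sigma_k\,\cdot\,)\to \tilde{f}$ inherent in the definition of $\tilde{w}_k$, and the total mass $\int_{\mathbb R^2}\tilde{f}=\beta_0$, produces $-\bigl(\tfrac{\beta_0}{2\pi}+o(1)\bigr)\log(1+\sigma_k^{-1}|x-x_k|)+O(1)$, which is (\ref{eq122}) on the outer region.

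The main obstacle is expected to be this logarithmic integral on $B_{r_k}(x_k)$: extracting the sharp coefficient $\beta_0/(2\pi)$ up to an $o(1)$ error uniformly as $|x-x_k|$ varies from scale $\sigma_k$ up to order one requires a careful matching between the inner bubble and the outer Green's profile. Assumption (\ref{eq121'}) enters precisely here: the atom of $\mathcal{P}$ at $\alpha_{\min}>0$ forces the dominant exponential $\alpha_{\min}e^{\alpha_{\min}v_k}/\int_\Omega e^{\alpha_{\min}v_k}$ to produce a clean leading scaling for $\tilde{f}$, so that the contributions from $\alpha>\alpha_{\min}$ enter only as $o(1)$ corrections in the coefficient. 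The overall strategy follows \cite{key058}, adapted to the measure-valued right hand side of (\ref{eq100}).
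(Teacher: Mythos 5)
Your high-level plan (inner region by rescaled convergence, outer region by Green's representation, then splice) is essentially the one the paper follows, and you correctly flag the real difficulty at the end. But the proposal does not close that gap, and it is precisely where all the work lives. Two ingredients are missing.

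First, the ``direct evaluation of the logarithmic integral'' only works on compact sets in the scaled variable $y=\sigma_k^{-1}(x-x_k)$, because the convergence $\sigma_k^2\mu_k(x_k+\sigma_k\cdot)\to\tilde f$ is in $C^2_{\rm loc}(\mathbb R^2)$. To control $\int \tilde f_k(z)\log\frac{|z|}{|x-z|}\,dz$ uniformly for $|y|$ ranging from $O(1)$ up to $O(\sigma_k^{-1})$, one needs a quantitative, $k$-uniform pointwise decay of $\tilde f_k$ at infinity. The paper gets this through the chain: (\ref{eq120}) and (\ref{eq121'}) force $\alpha_{\min}\in\mathcal B$, hence $\int_{\mathbb R^2}e^{\tilde w_{\alpha_{\min}}}<\infty$, hence $\alpha_{\min}>4\pi/\beta_0$; combined with the a priori logarithmic upper bound $\tilde w_k(x)\le -(\beta_k/2\pi-\epsilon)\log|x|+C$ this yields $\tilde f_k(y)\le C|y|^{-(2+3\delta_0)}$ with some $\delta_0>0$, which is what makes $\int_{B_{R_0\sigma_k^{-1}}}\tilde f_k|\log|y||\,dy=O(1)$. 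Without this decay estimate the tail of your interior integral on $B_{r_k}(x_k)$ is not controllable, and the coefficient is not extractable as $\beta_0/2\pi+o(1)$. Your remark that (\ref{eq121'}) ``enters precisely here'' is qualitatively right but the mechanism (forcing $\alpha_{\min}\beta_0>4\pi$ and thence a polynomial decay rate) is not identified.

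Second, even with the decay estimate there is a middle annulus, roughly $R_0'\le |y|\le(\log\sigma_k^{-1})^{1/\delta_0}$, where neither the $C^2_{\rm loc}$ limit nor the far-field Green's expansion gives the pointwise two-sided bound directly. The paper handles it with explicit sub- and super-solutions $\tilde w_k^\pm(x)=-\frac{\beta_k}{2\pi}\log|x|\mp C\mp(\text{correction})$ and the maximum principle; some argument of this kind (or a genuinely quantitative version of your ``careful matching'') is needed, and your proposal stops at naming the obstacle rather than resolving it.

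So: the decomposition is the same as the paper's, the asymptotics of $\tilde w$ you cite are correct (Proposition \ref{pro201}), and the Green's function splitting is sound, but the key technical lemmas---the decay of $\tilde f_k$ via $\alpha_{\min}>4\pi/\beta_0$, the $L^1(\log)$ bound, and the barrier argument on the intermediate annulus---are absent, and without them the claimed $o(1)$ control of the coefficient is not established.
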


\begin{Rem}\label{rem105}
$s(x) \equiv 0$ which we call residual vanishing,  occurs under the suitable {assumptions on} $\mathcal P$. Indeed, if $\alpha_{min} > 1/2$ then the residual vanishing occurs to the $(\lambda_k, v_k)$ in (\ref{eq110}) (\cite{key056}, Theorem 3). Moreover, if the residual vanishing occurs to the above $(\lambda_k, v_k)$ then it follows that
\begin{equation*}
\# S=1, \quad \overline \lambda=\frac{8\pi}{\Big(\int_{I_+} \alpha \mathcal P(d\alpha) \Big)^2},
\end{equation*}
see \cite{key056}, Lemma 3.
\end{Rem}

\begin{Rem} \label{rem106}
{The estimate (\ref{eq122}) is weaker than (\ref{eq108}).} Indeed, if $\mathcal P(d\alpha)=\delta_1(d\alpha)$ then $\beta_0=8\pi$ by Chen-Li \cite{key08} and (\ref{eq122}) does not correspond to (\ref{eq108}). However, by a direct calculation, (\ref{eq122}) leads to (\ref{eq108}) with the case $\mathcal P(d\alpha)=\delta_1(d\alpha)$ {in the meaning of the log function term}. Indeed, suppose $(\lambda_k, v_k)$ satisfy (\ref{eq110}) then it holds that\\
(i)
\begin{equation*} 
\Bigg(1+\Big(\frac{e^{v_k(x_k)}}{\int_{\Omega} e^{v_k}}\Big)^{\frac{1}{2}}|x-x_k|\Bigg)^2=\Bigg(1+\frac{e^{v_k(x_k)}}{\int_{\Omega} e^{v_k}}|x-x_k|^2\Bigg)(1+o(1)) \quad as \quad k\to+\infty,
\end{equation*}
(ii)
\begin{equation*} 
\log\Bigg(1+\frac{e^{v_k(x_k)}}{\int_{\Omega} e^{v_k}}|x-x_k|^2\Bigg)=\log \Bigg(1+{\lambda_k}\frac{e^{v_k(x_k)}}{\int_{\Omega} e^{v_k}}|x-x_k|^2\Bigg)+O(1) \quad as \quad k\to+\infty,
\end{equation*}
(iii)
\begin{equation*}
1+\frac{\lambda_k}{8}\frac{e^{v_k(x_k)}}{\int_{\Omega} e^{v_k}}|x-x_k|^2=\Bigg(1+{\lambda_k}\frac{e^{v_k(x_k)}}{\int_{\Omega} e^{v_k}}|x-x_k|^2\Bigg) \cdot O(1) \quad as \quad k\to+\infty,
\end{equation*}
uniformly $B_{R_0/2}(x_0)$ as in Theorem \ref{th102}.
Applying (i), (ii) and (iii) to (\ref{eq122}), we have {the form of} (\ref{eq108}) as $k \to \infty$.
\end{Rem}

For the sufficient conditions of Theorem \ref{th102}, we consider the following identity:
\begin{equation} \label{eq124}
\int_{\mathbb R^2} \tilde{f} dx=\overline \lambda \int_{I_+} \alpha \mathcal P(d\alpha).
\end{equation}
The above identity {implies} the following Proposition.

\begin{Prop} \label{prop102}
Under the {assumption of} $(\lambda_k, v_k)$ in (\ref{eq110}), (\ref{eq124}) holds if and only if the residual vanishing occurs and mass identity (\ref{eq120}) holds.
\end{Prop}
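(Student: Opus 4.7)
The strategy is to exhibit (\ref{eq124}) as the extremal case of a chain of natural inequalities relating $\beta_0$, the local concentration masses $m(y)$, the residual density $s$, and the total mass of $\mu_k$. I would combine a global mass identity obtained by testing (\ref{eq114}) against the constant function $1$ with the elementary bound $\beta_0 \leq m(x_0)$ obtained by a scaling comparison, and then observe that (\ref{eq124}) forces equality throughout.

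For the global mass identity, Fubini applied to the left-hand side of (\ref{eq114}) gives
\begin{equation*}
\int_{\Omega} \mu_k(dx) = \lambda_k \int_{I_+} \alpha \Bigg( \int_{\Omega} \frac{e^{\alpha v_k}}{\int_{\Omega} e^{\alpha v_k} dx} dx \Bigg) \mathcal P(d\alpha) = \lambda_k \int_{I_+} \alpha \mathcal P(d\alpha),
\end{equation*}
which tends to $\overline\lambda \int_{I_+} \alpha \mathcal P(d\alpha)$. Comparing with the mass of the weak-$*$ limit measure yields
\begin{equation*}
\overline\lambda \int_{I_+} \alpha \mathcal P(d\alpha) = \int_{\Omega} s(x)\, dx + \sum_{y \in S} m(y).
\end{equation*}

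For the local bound $\beta_0 \leq m(x_0)$, I would fix $R>0$ and a small $\epsilon>0$ such that $\overline{B_\epsilon(x_0)} \cap S = \{x_0\}$ and the limit measure does not charge $\partial B_\epsilon(x_0)$; for $k$ large, $R \sigma_k < \epsilon/2$ and $x_k \in B_{\epsilon/2}(x_0)$, so the change of variables $y = x_k + \sigma_k x$ gives
\begin{equation*}
\int_{B_R(0)} \tilde f_k(x)\, dx = \int_{B_{R \sigma_k}(x_k)} f_k(y)\, dy \leq \int_{B_\epsilon(x_0)} f_k(y)\, dy,
\end{equation*}
where $f_k$ denotes the density of $\mu_k$. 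Passing $k \to \infty$ via the $C^2_{loc}$ convergence $\tilde f_k \to \tilde f$ on the left and the weak-$*$ convergence of $\mu_k$ on the right, then sending $\epsilon \to 0$ and $R \to \infty$, produces $\beta_0 = \int_{\mathbb R^2} \tilde f\, dx \leq m(x_0)$.

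Combining the two ingredients with $s \geq 0$ and $m(y) \geq 4\pi$ for every $y \in S$ yields
\begin{equation*}
\beta_0 \leq m(x_0) \leq \sum_{y \in S} m(y) \leq \int_{\Omega} s\, dx + \sum_{y \in S} m(y) = \overline\lambda \int_{I_+} \alpha \mathcal P(d\alpha).
\end{equation*}
Thus (\ref{eq124}) is equivalent to equality throughout, which simultaneously forces $\beta_0 = m(x_0)$ (the mass identity (\ref{eq120})), $S = \{x_0\}$ (since any additional blow-up point would contribute at least $4\pi$), and $s \equiv 0$ (residual vanishing). Conversely, if residual vanishing occurs then Remark \ref{rem105} ensures $\#S = 1$, so $S=\{x_0\}$, and the global mass identity combined with (\ref{eq120}) immediately yields (\ref{eq124}). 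The only nontrivial technical point I anticipate is the justification of $\beta_0 \leq m(x_0)$: one must interchange the scaling limit with the weak-$*$ limit and choose $\epsilon$ so that the boundary $\partial B_\epsilon(x_0)$ is null for the limit measure.
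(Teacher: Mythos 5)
Your proof is correct, and it is a genuinely different argument from the one in the paper. In the paper's forward direction, after obtaining $\tilde\psi\equiv 1$ $\mathcal P$-a.e. from (\ref{eq240}), one deduces $\alpha_{\min}=\beta_{\inf}\geq 4\pi/\beta_0$ through Proposition \ref{pro205} and Lemma \ref{lem208}, runs the Green-function comparison argument of Proposition \ref{prop300} to get $\int_\Omega e^{\alpha v_k}\,dx\to+\infty$ for every $\alpha\in\mathrm{supp}\,\mathcal P$, and then invokes Lemma 4 of \cite{key056} to conclude residual vanishing; $\#S=1$ and the mass identity are then read off from Remark \ref{rem105} and the global mass balance. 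You bypass this machinery entirely with the single chain
\begin{equation*}
\beta_0\leq m(x_0)\leq\sum_{y\in S}m(y)\leq\int_\Omega s\,dx+\sum_{y\in S}m(y)=\overline\lambda\int_{I_+}\alpha\,\mathcal P(d\alpha),
\end{equation*}
observing that (\ref{eq124}) is precisely the saturation of this chain. The ingredient you supply that the paper never states explicitly is the local bound $\beta_0\leq m(x_0)$, which you obtain cleanly from the scaling identity $\int_{B_R}\tilde f_k=\int_{B_{R\sigma_k}(x_k)}f_k\leq\mu_k(B_\epsilon(x_0))$ and Portmanteau-type convergence on a continuity ball; the limit interchange you flag as the delicate point is indeed the only thing to get right, and you handle it correctly. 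What you gain is a more elementary, self-contained argument that does not lean on (\ref{eq121'}) or the external residual-vanishing lemma; what the paper's route buys is that it surfaces the $\tilde\psi\equiv1$ and $\alpha_{\min}>4\pi/\beta_0$ structure, which is reused directly in the proof of Theorem \ref{th106}. Both directions of your equivalence are sound, including the use of Remark \ref{rem105} to supply $\#S=1$ in the converse.
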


Lastly, we derive the identity (\ref{eq124}) in the minimizing problem with $\mathcal P(d\alpha)$ two-intensities, that is,
\begin{equation} \label{eq136}
\mathcal P(d\alpha)=\tau \delta_1(d\alpha)+(1-\tau)\delta_{\gamma}(d\alpha),
\end{equation}
where $\tau, \gamma \in (0, 1)$ and note that
\begin{equation} \label{eq137}
\overline \lambda
=\begin{cases}
   \frac{8\pi}{\tau}, &\gamma \leq \frac{\sqrt{\tau}}{1+\sqrt{\tau}} \\
   \frac{8\pi}{(\tau+(1-\tau)\gamma)^2},  &\gamma >\frac{\sqrt{\tau}}{1+\sqrt{\tau}}.
  \end{cases}
\end{equation}
The following statements hold under the assumption of $(\lambda_k, v_k)$ in (\ref{eq110}):
\begin{Thm} \label{th106}
(i) If $\mathcal P(d\alpha)$ is as in (\ref{eq136}) and $\gamma \in (\sqrt{\tau}/(1+\sqrt{\tau}), 1)$ then the  identity (\ref{eq124}) holds and the Y. Y. Li type estimate as in (\ref{eq122}) also holds.\\
(ii) If $\mathcal P(d\alpha)$ is as in (\ref{eq136}) and $\gamma \in (0, \sqrt{\tau}/(1+\sqrt{\tau}))$ then the identity (\ref{eq124}) does not hold.
\end{Thm}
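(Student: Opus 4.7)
The overall strategy is to separate the two cases according to which subset $K \subset \mathrm{supp}\,\mathcal{P} = \{\gamma, 1\}$ achieves the infimum in (\ref{eq126}): in case (i) the infimum is realized at the full set $K = \{\gamma, 1\}$, whereas in case (ii) it is realized at the singleton $K = \{1\}$. In both cases I would use Proposition \ref{prop102} as the bridge: (\ref{eq124}) is equivalent to the conjunction of residual vanishing and the mass identity (\ref{eq120}).

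Part (ii) is handled quickly via the second statement in Remark \ref{rem105} (\cite{key056}, Lemma 3): if residual vanishing were to occur, then $\overline{\lambda}$ would be forced to equal $8\pi/(\int_{I_+} \alpha\,\mathcal{P}(d\alpha))^2 = 8\pi/(\tau + (1-\tau)\gamma)^2$. But the hypothesis $\gamma < \sqrt{\tau}/(1+\sqrt{\tau})$ is equivalent to $\sqrt{\tau} > \tau + (1-\tau)\gamma$, which squares to $\tau > (\tau + (1-\tau)\gamma)^2$, so the value $\overline{\lambda} = 8\pi/\tau$ given by (\ref{eq137}) is strictly less than $8\pi/(\tau + (1-\tau)\gamma)^2$. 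These two values disagree, so residual vanishing must fail, and Proposition \ref{prop102} then forces the failure of (\ref{eq124}).

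For part (i), (\ref{eq137}) gives $\overline{\lambda} = 8\pi/(\tau + (1-\tau)\gamma)^2$, which is numerically consistent with the value demanded by residual vanishing, so now both residual vanishing and the mass identity must be established rather than ruled out. I would first prove residual vanishing: for $\gamma > 1/2$ this is the first part of Remark \ref{rem105}, while for the subrange $\gamma \in (\sqrt{\tau}/(1+\sqrt{\tau}), 1/2]$ I would give a separate energy comparison, using the minimizing property of $v_k$ together with test functions modeled on the optimal bubble for $K = \{\gamma, 1\}$ in (\ref{eq126}), to rule out any positive residual density $s(x)$. Once residual vanishing holds, Remark \ref{rem105} gives $\#S = 1$, and $m(x_0) = \lim_{k \to \infty} \int_\Omega \mu_k = \overline{\lambda}(\tau + (1-\tau)\gamma) = 8\pi/(\tau + (1-\tau)\gamma)$. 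To establish the mass identity (\ref{eq120}), I would show that in this regime, in contrast to part (ii), $\tilde{w}_{k,\gamma}$ stays locally uniformly bounded and converges to $\gamma\tilde{w} + c$ for some finite constant $c$, so that the scaled limit equation on $\mathbb{R}^2$ reads
\begin{equation*}
-\Delta \tilde{w} = \overline{\lambda}\bigl[\tau e^{\tilde{w}} + (1-\tau)\gamma e^{\gamma \tilde{w} + c}\bigr],
\end{equation*}
and a Pohozaev-type computation on this equation, together with the decay implied by $\int_{\mathbb{R}^2}\tilde f \le \overline{\lambda}\int_{I_+}\alpha\,\mathcal{P}(d\alpha)$, yields $\int_{\mathbb{R}^2} \tilde{f}\,dx = 8\pi/(\tau + (1-\tau)\gamma) = m(x_0)$. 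Proposition \ref{prop102} then gives (\ref{eq124}), and Theorem \ref{th102} delivers the Y.\,Y. Li type estimate.

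The main obstacle is the verification of the mass identity in the low-$\gamma$ subrange $\gamma \in (\sqrt{\tau}/(1+\sqrt{\tau}), 1/2]$ of case (i): since the standard Chen--Li bubble $(1 + c|y|^2)^{-2}$ is not $\gamma$-integrable on $\mathbb{R}^2$ when $\gamma \leq 1/2$, the limiting profile cannot be the classical Liouville one and must instead be shaped by the mixed nonlinearity. Pinning down the finite limit of $(\gamma - 1)v_k(x_k) + \log(\int_\Omega e^{v_k}/\int_\Omega e^{\gamma v_k})$, controlling the tail behavior of the resulting two-term equation on $\mathbb{R}^2$, and executing the Pohozaev identity precisely enough to extract the value $8\pi/(\tau + (1-\tau)\gamma)$, is the core technical step; establishing residual vanishing in this same subrange (which lies outside the scope of Remark \ref{rem105}) is a comparably delicate prerequisite.
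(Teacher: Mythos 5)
Your treatment of part (ii) matches the paper exactly: assume (\ref{eq124}), invoke Proposition \ref{prop102} to get residual vanishing, use Remark \ref{rem105} to force $\overline\lambda = 8\pi/(\tau+(1-\tau)\gamma)^2$, and contradict (\ref{eq137}). That part is fine.

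For part (i), however, you are attempting the logically reverse route through Proposition \ref{prop102}, and that route has gaps you correctly flag but do not close. You propose to establish residual vanishing and the mass identity (\ref{eq120}) directly, and then conclude (\ref{eq124}). The paper does the opposite: it proves (\ref{eq124}) first, purely from the extremality machinery of Section 4, and then reads off residual vanishing and the mass identity as consequences via Proposition \ref{prop102}. Concretely, the paper combines the Pohozaev relation (\ref{eq257}) (rewritten as (\ref{eq512})), the uniqueness of the maximizer $\psi_d$ in Lemma \ref{lem504}, and the concavity analysis of $H(\tau)$ in (\ref{eq530})--(\ref{eq532}) --- which crucially uses the explicit formula (\ref{eq126}) for $\overline\lambda$ --- to show that $\tilde\psi$ must equal $\chi_{I_d}$, and then Proposition \ref{prop502} upgrades this to $\tilde\psi = \chi_{\{\gamma,1\}}$. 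Substituting into (\ref{eq240}) yields (\ref{eq124}) immediately, with no need to verify residual vanishing or $\gamma\in\mathcal B$ separately.

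The specific gaps in your version are precisely the steps the paper sidesteps. (1) Residual vanishing for $\gamma\in(\sqrt\tau/(1+\sqrt\tau),\,1/2]$: Remark \ref{rem105} does not apply, and the ``energy comparison with test bubbles'' you sketch is not carried out; this is a substantial open sub-problem in your plan, not a routine verification. (2) The claim that $\tilde w_{k,\gamma}$ stays locally bounded, i.e.\ $\gamma\in\mathcal B$, is asserted without proof; in the paper this is a nontrivial output of the optimization argument ($\tilde\psi=\chi_{\{\gamma,1\}}$ forces $\gamma\in\mathcal B$, since $\tilde\psi(\gamma)=1>0$ entails $\gamma\in\mathcal B$ by the definition of $\tilde\psi$ and Proposition \ref{pro203}), not an independent analytic estimate. (3) The Pohozaev computation on the two-term equation, while plausible, would have to be executed with care about the decay of $e^{\gamma\tilde w}$ when $\gamma\le 1/2$, which is again the hard regime. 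In short, your plan identifies the right sub-problems but leaves all three unresolved, whereas the paper's extremality argument delivers (\ref{eq124}) in one stroke; you should instead apply (\ref{eq534}) and Proposition \ref{prop502}, splitting on $\beta_d>\beta_{inf}$ versus $\beta_d\le\beta_{inf}$ as the paper does, and obtain a contradiction in the first sub-case and $\tilde\psi=\chi_{\{\gamma,1\}}$ in the second.
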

{
\begin{Rem} \label{rem108}
Proposition \ref{prop100} and Lemma \ref{lem100} follow for the general solution sequence $(\lambda_k, v_k)$, while our main results Theorem \ref{th102}-\ref{th106} describe just for minimizing sequence $(\lambda_k, v_k)$. In particular, to obtain the estimate (\ref{eq122}) for the general blow-up solution sequence, we have to assume the identity like (\ref{eq124}). In such a case, however, we do not know this identity holds or not. For the proof of Theorem \ref{th106}, we need the property of $\overline \lambda$. This detail shall be mentioned  as Remark \ref{rem400}-\ref{rem402} in Section 4. 
\end{Rem}
}

Our paper is composed of four sections and Appendix. First, we shall discuss the blow-up argument  for general $\mathcal P$ as Preliminary in Section 2. Next, we show Theorem \ref{th102} in Section 3. Lastly, we prove Theorem \ref{th106} and Proposition \ref{prop102} in Section 4. An auxiliary lemma of Section 2 in Appendix.

\section{Preliminary}
In this section, we discuss the blow-up argument for $(\lambda_k, v_k)$ in (\ref{eq110}) without residual vanishing.

\begin{Lem} \label{lem201}
For $\alpha \in I_+$, we have
\begin{equation}\label{eq200}
\frac{d}{d\alpha}w_{k, \alpha}(0) \geq 0,
\end{equation}
where ${w}_{k, \alpha}(x)=\alpha v_k(x+x_k)-\log\int_{\Omega}e^{\alpha v_k}$.
\end{Lem}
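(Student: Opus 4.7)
The plan is to differentiate $w_{k,\alpha}(0)$ with respect to $\alpha$ explicitly and then invoke the maximality of $v_k$ at $x_k$.

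First I would compute
\begin{equation*}
w_{k,\alpha}(0) = \alpha v_k(x_k) - \log \int_{\Omega} e^{\alpha v_k}\,dx,
\end{equation*}
so that a direct differentiation under the integral sign (which is justified because $v_k$ is bounded on $\Omega$ for each fixed $k$ and $\alpha$ ranges over the compact set $I_+$) gives
\begin{equation*}
\frac{d}{d\alpha}w_{k,\alpha}(0) = v_k(x_k) - \frac{\int_{\Omega} v_k\, e^{\alpha v_k}\,dx}{\int_{\Omega} e^{\alpha v_k}\,dx}.
\end{equation*}

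The second step is the observation that by the definition of $x_k$ as the maximizer of $v_k$ on $\overline{\Omega}$, we have $v_k(x) \le v_k(x_k)$ pointwise. Since $e^{\alpha v_k} \ge 0$, multiplying through and integrating yields
\begin{equation*}
\int_{\Omega} v_k\, e^{\alpha v_k}\,dx \le v_k(x_k)\int_{\Omega} e^{\alpha v_k}\,dx,
\end{equation*}
which rearranges to exactly the inequality $\frac{d}{d\alpha}w_{k,\alpha}(0) \ge 0$ that we want. (Equivalently, the quotient on the right is the expectation of $v_k$ under the probability density $e^{\alpha v_k}/\int e^{\alpha v_k}$, and an expectation cannot exceed the essential supremum.)

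I do not expect any real obstacle here: the statement reduces to a one-line calculation followed by the elementary pointwise bound $v_k \le v_k(x_k)$. The only thing to be slightly careful about is justifying differentiation under the integral, which is standard since $v_k \in L^\infty(\Omega)$ for each fixed $k$ (so $\alpha \mapsto e^{\alpha v_k(x)}$ and its derivative $v_k(x)e^{\alpha v_k(x)}$ are dominated by integrable functions on $\Omega$ uniformly in $\alpha$ on compact subsets of $I_+$).
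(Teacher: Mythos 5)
Your proof is correct and follows the same route as the paper: compute $\frac{d}{d\alpha}w_{k,\alpha}(0) = v_k(x_k) - \int_\Omega v_k e^{\alpha v_k}/\int_\Omega e^{\alpha v_k}$, then use $v_k \le v_k(x_k)$ pointwise (hence $\int_\Omega v_k e^{\alpha v_k} \le v_k(x_k)\int_\Omega e^{\alpha v_k}$) to conclude. The remark justifying differentiation under the integral is a reasonable addition the paper leaves implicit.
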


\begin{proof}
For $k$ and $\alpha \in I_+$, we have
\begin{equation*}
\frac{d}{d\alpha}w_{k, \alpha}(0) =v_k(x_k)-\frac{\int_{\Omega} v_k e^{\alpha v_k}}{\int_{\Omega} e^{\alpha v_k}}\geq v_k(x_k)\Big(1-\frac{\int_{\Omega} e^{\alpha v_k}}{\int_{\Omega} e^{\alpha v_k}} \Big)=0,
\end{equation*}
recalling that $x_k$ is the maximizer of $v_k$. 
\end{proof}
Henceforth, we put
\begin{equation*}
w_k(x)=w_{k, 1}(x).
\end{equation*}
It follows from (\ref{eq200}) that
\begin{equation} \label{eq203}
w_{k, 1}(0)=\max_{\alpha \in I_+}w_{k, \alpha}(0).
\end{equation}
The following Lemma is the starting point of our {blow-up} analysis.
\begin{Lem} \label{lem202}
For every $\alpha \in I_+ \setminus\{0\}$, it holds that
\begin{equation*}
w_{k, \alpha}(0)=\max_{\Omega} w_{k, \alpha}\to +\infty \quad as \hspace{2mm}k\to\infty.
\end{equation*}
\end{Lem}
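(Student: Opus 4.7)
The plan is to address the two parts of the conclusion in turn. First, the identity $w_{k,\alpha}(0) = \max_\Omega w_{k,\alpha}$ is immediate from the definition of $w_{k,\alpha}$: the only $x$-dependent term in $w_{k,\alpha}(x) = \alpha v_k(x+x_k) - \log\int_\Omega e^{\alpha v_k}$ is $\alpha v_k(x+x_k)$, which is maximized at $x=0$ because $x_k$ is the global maximizer of $v_k$.

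The substance of the statement is the divergence. Writing
\[
w_{k,\alpha}(0) = \alpha v_k(x_k) - \log\int_\Omega e^{\alpha v_k}
\]
and invoking $v_k(x_k) \to +\infty$ from (\ref{eq110}), it suffices to derive a matching upper bound of the form $\log \int_\Omega e^{\alpha v_k} \leq \alpha v_k(x_k) + \eta_r$ for all large $k$, where the constant $\eta_r$ tends to $-\infty$ as a parameter $r\to 0$. The heuristic to exploit is that the mass of $e^{\alpha v_k}$ concentrates on arbitrarily small neighborhoods of the finite blow-up set $S$, so that the integral is essentially $|B_r|\cdot e^{\alpha v_k(x_k)}$.

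To implement this, I would first establish the uniform bound $\sup_k \|v_k\|_{L^\infty(\overline\Omega \setminus \bigcup_{x_0 \in S} B_r(x_0))} \leq C_r$ for each small $r>0$. The portion near $\partial\Omega$ is covered by Lemma \ref{lem100}, and on the remaining interior compact set $K$, which by (\ref{eq116}) is disjoint from $S$, the bound follows by contradiction from the very definition of the blow-up set: if $\sup_K v_k$ were unbounded along a subsequence, the corresponding maximizers in $K$ would subconverge to a limit $y_\ast \in K \cap S$, contradicting $K \cap S = \emptyset$. Given this bound and using $v_k \leq v_k(x_k)$ throughout $\Omega$, splitting yields
\[
\int_\Omega e^{\alpha v_k} \leq \#S \cdot |B_r|\cdot e^{\alpha v_k(x_k)} + |\Omega|\,e^{\alpha C_r},
\]
and since $v_k(x_k)\to+\infty$ while $C_r$ is fixed, for $k$ large (depending on $\alpha$ and $r$) the first term dominates, giving $\int e^{\alpha v_k} \leq 2\#S\cdot|B_r|\cdot e^{\alpha v_k(x_k)}$. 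Hence $w_{k,\alpha}(0) \geq -\log(2\#S\cdot|B_r|)$ for all sufficiently large $k$, and letting $r\to 0$ after taking the liminf in $k$ produces $w_{k,\alpha}(0)\to+\infty$.

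The main obstacle is exactly the uniform $L^\infty$ bound of $v_k$ off the blow-up set; once that is in hand, the rest is a clean two-region splitting that is uniform in $\alpha \in I_+ \setminus \{0\}$. That bound is itself encapsulated in Lemma \ref{lem100} (boundary region) together with the structure of $S$ supplied by Proposition \ref{prop100} and (\ref{eq116}) (interior compact region), so no further heavy analysis is needed specifically for the present lemma.
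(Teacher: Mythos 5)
Your proof is correct but follows a genuinely different route from the paper's. The paper first reduces to $\alpha=1$ via H\"older's inequality, which gives $w_{k,\alpha}(0) \ge \alpha\,w_{k,1}(0) + (\alpha-1)\log|\Omega|$; it then argues by contradiction: if $w_{k,1}(0)=O(1)$, the monotonicity of Lemma \ref{lem201} gives $w_{k,\alpha}(0)=O(1)$ for every $\alpha$, so the nonlinearity in (\ref{eq100}) is uniformly bounded and elliptic regularity with the zero Dirichlet condition bounds $\|v_k\|_\infty$, contradicting (\ref{eq110}). You instead estimate $\int_\Omega e^{\alpha v_k}$ directly, splitting $\Omega$ into $r$-balls around the finitely many points of $S$ and their complement, to obtain the explicit bound $w_{k,\alpha}(0)\ge -\log(2\#S\cdot|B_r|)$ for $k$ large, and then send $r\to 0$. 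The paper's route is more elementary and uses nothing about $S$ beyond its existence; yours is more quantitative, avoids both the H\"older reduction and Lemma \ref{lem201}, but leans on the uniform bound of $v_k$ on compact sets disjoint from $S$. That bound is true, but be careful with the justification: it is not a formal consequence of ``the very definition'' of $S$ as stated (which refers to full sequences, whereas your contradiction only produces a sub-subsequence); it is rather the Brezis--Merle $\varepsilon$-regularity conclusion valid along the subsequence chosen in Proposition \ref{prop100}, and should be invoked as such.
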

\begin{proof}
Since $e^{w_{k, \alpha}(0)}=e^{\alpha v_k}/\int_{\Omega} e^{\alpha v_k} \geq |\Omega|^{\alpha-1}e^{\alpha w_{k, 1}(0)}$ for $\alpha \in I_+ \setminus\{0\}$, it suffices to show that $w_k(0)=w_{k, 1}(0) \to +\infty$ as $k \to +\infty$. Suppose $w_k(0)=O(1)$ as $k \to +\infty$, from (\ref{eq200}) we have $w_{k, \alpha}(0)=O(1)$ as $k \to +\infty$ for all $\alpha \in I_+ \setminus\{0\}$. Therefore the right-hand side on the equation (\ref{eq100}) is uniformly bounded. This contradicts to (\ref{eq110}) from {elliptic regularity arguments}. 
\end{proof}

Putting
\begin{equation} \label{eq204}
\tilde{w}_{k, \alpha}(x):=w_{k, \alpha}(\sigma_k x)+2\log \sigma_k, \quad \sigma_k=e^{-w_k(0)/2}\to 0, \quad \tilde{w}_k:=\tilde{w}_{k, 1}.
\end{equation}
Then, we have 
\begin{equation}  \label{eq205}
-\Delta \tilde{w}_k =\tilde{f}_k, \quad \tilde{w}_k(x) \leq \tilde{w}_k(0)=0 \quad {\rm in} \hspace{2mm} B_{R_0    \sigma_k^{-1} (0)},
\end{equation}
\begin{equation} \label{eq207}
\int_{B_{R_0 \sigma_k^{-1}}(0)} e^{\tilde{w}_{k, \beta}} \leq 1, \quad \int_{B_{R_0 \sigma_k^{-1}}(0)} \tilde{f}_k \leq \lambda_k \int_{I_+} \beta\mathcal P(d\beta),
\end{equation}
where
\begin{align} \label{eq209}
\tilde{f}_k:= \lambda_k \int_{I_+} \beta e^{\tilde{w}_{k, \beta}}\mathcal P(d\alpha), \quad 4R_0 ={dist(x_0, \partial\Omega)}.
\end{align}
We shall use a fundamental fact of which proof is provided in Appendix.
\begin{Lem} \label{lem203}
Given $f \in L^1 \cap L^{\infty}(\mathbb R^2)$, let
\begin{equation*}
z(x)=\frac{1}{2\pi}\int_{\mathbb R^2}f(y)\log\frac{|x-y|}{1+|y|}dy.
\end{equation*}
Then, it holds that
\begin{equation*}
\lim_{|x|\to+\infty} \frac{z(x)}{\log|x|} \equiv \frac{1}{2\pi}\int_{\mathbb R^2} f.
\end{equation*}
\end{Lem}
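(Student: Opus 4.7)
The plan is to set $M := \int_{\mathbb R^2} f$ and show $2\pi z(x) = M \log|x| + o(\log|x|)$ as $|x|\to\infty$, by partitioning $\mathbb R^2$ into three regions according to the size of $|y|$ relative to $R := |x|$. Given $\varepsilon>0$, I would first use $f \in L^1$ to choose $R_0$ with $\int_{|y|>R_0}|f| < \varepsilon$, and then, for $R > 2R_0$, split the integral over $D_1 = \{|y|\leq R_0\}$, the annulus $D_2 = \{R_0 < |y| < 2R\}$, and the exterior $D_3 = \{|y|\geq 2R\}$.

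On $D_1$ the ratio $|x-y|/R$ lies in $[1/2,3/2]$ and $\log(1+|y|) \leq \log(1+R_0)$, so $\log\frac{|x-y|}{1+|y|} = \log R + O(1)$ uniformly in $y$; this produces the main contribution $(\log R)\int_{D_1} f = (M + O(\varepsilon))\log R + O(1)$. On $D_3$ one has $|x-y| \in [|y|/2, 3|y|/2]$, so both $|x-y|/|y|$ and $|y|/(1+|y|)$ are bounded between positive constants, hence $\log\frac{|x-y|}{1+|y|} = O(1)$ uniformly in $y$ and this region contributes at most $C\int_{D_3}|f| \leq C\varepsilon$.

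The annulus $D_2$ is the main obstacle, since the singularity $y = x$ lies inside it and $\log(1+|y|)$ is comparable to $\log R$ there. I would separate the two logarithms. The bound $\log(1+|y|) \leq \log(1+2R)$ combined with $\int_{D_2}|f| \leq \varepsilon$ controls the $\log(1+|y|)$-piece by $\varepsilon\log(1+2R)$. For $\int_{D_2} f(y)\log|x-y|\,dy$ I would isolate the unit ball $B_1(x)$, where the logarithmic singularity is integrable and $\|f\|_\infty$ gives an $O(1)$ bound, and use $|\log|x-y|| \leq \log(3R)$ on $D_2 \setminus B_1(x)$ together with the tail bound, producing $\varepsilon \log(3R) + O(1)$. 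Summing the three regions gives $2\pi z(x) = M\log R + O(\varepsilon \log R) + O(1)$, so $\limsup_{|x|\to\infty}\bigl|2\pi z(x)/\log|x| - M\bigr| \leq C\varepsilon$, and letting $\varepsilon \to 0$ finishes the proof. The same three-region decomposition, applied without tracking $\log R$, also confirms that $z(x)$ is well-defined pointwise under the hypothesis $f \in L^1 \cap L^\infty$.
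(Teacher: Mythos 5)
Your argument is correct, and it follows the same basic strategy as the paper's Appendix proof — split $\mathbb R^2$ into three regions, use $f\in L^\infty$ to tame the logarithmic singularity, use $f\in L^1$ to control the tail, and read off the main term from the region where $|y|$ is bounded — but the actual partition is different. The paper cuts along $|y-x|=1$ and $|y|=K$: a fixed unit ball $D_1=\{|y-x|<1\}$ around the singularity, then a bounded region $D_{2,K}=\{|y-x|>1,\,|y|\le K\}$ that carries the main contribution, then a tail $D_{3,K}=\{|y-x|>1,\,|y|>K\}$; it then works with the normalized integrand $\frac{\log|x-y|-\log(1+|y|)-\log|x|}{\log|x|}$ and shows each of $I_1$, $I_{2,K}$, $I_{3,K}$ is small, sending $|x|\to\infty$ before $K\to\infty$. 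You instead cut purely along level sets of $|y|$ — the ball $\{|y|\le R_0\}$, the annulus $\{R_0<|y|<2|x|\}$, and the exterior $\{|y|\ge 2|x|\}$ — and directly estimate $2\pi z(x)-M\log|x|$. This moves the singularity $y=x$ into your middle annulus, which you then handle by peeling off $B_1(x)$ there (the same $L^\infty$ trick the paper uses on its $D_1$). The upshot is the same double limit ($|x|\to\infty$ then $\varepsilon\to 0$), with roughly the same amount of work; your version has the mild aesthetic advantage that the regions are described only in terms of $|y|$, while the paper's version keeps the singular region cleanly separate from the start. Both are standard and fine.
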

The following lemma is also classical (see \cite{key042} p. 130).
\begin{Lem} \label{lem204}
If $\phi=\phi(x)$ is a harmonic function on the whole space $\mathbb R^2$ such that
\begin{equation*}
\phi(x) \leq C_1(1+\log|x|), \quad x \in \mathbb R^2 \setminus B_1
\end{equation*}
then it is a constant function.
\end{Lem}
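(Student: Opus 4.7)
The plan is to reduce the statement to showing $\nabla\phi\equiv 0$ on $\mathbb{R}^2$, from which connectedness yields that $\phi$ is constant. Since the hypothesis supplies only a one-sided upper bound on $\phi$, the standard gradient estimate cannot be applied to $\phi$ itself; instead I would subtract $\phi$ from its supremum on a large ball centered at the point of interest, producing a nonnegative harmonic function to which the usual derivative bound does apply.

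Fix an arbitrary $x_0\in\mathbb{R}^2$ and, for each large $R$, set $M_R:=\sup_{B_R(x_0)}\phi$ and $u_R:=M_R-\phi$. Then $u_R\geq 0$ is harmonic on $B_R(x_0)$. Because $\phi$ is harmonic, hence continuous on $\overline{B_1}$, the growth hypothesis extends to all of $\mathbb{R}^2$ in the form $\phi(x)\leq C_2\bigl(1+\log(1+|x|)\bigr)$ for some constant $C_2$ depending on $C_1$ and $\max_{\overline{B_1}}\phi$; in particular,
\begin{equation*}
M_R\leq C_2\bigl(1+\log(1+R+|x_0|)\bigr).
\end{equation*}

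For any nonnegative harmonic function $u$ on $B_R(x_0)$, each partial derivative $\partial_i u$ is harmonic, so the volume mean value property combined with the divergence theorem gives
\begin{equation*}
\partial_i u(x_0)=\frac{1}{|B_R|}\int_{\partial B_R(x_0)} u(y)\,\nu_i(y)\,dS(y),
\end{equation*}
whence $|\nabla u(x_0)|\leq \frac{1}{|B_R|}\int_{\partial B_R(x_0)} u\,dS = \frac{C}{R}\,u(x_0)$ by the spherical mean value property applied to $u\geq 0$ (with $C$ an absolute constant). Applying this to $u_R$ at $x_0$,
\begin{equation*}
|\nabla\phi(x_0)|=|\nabla u_R(x_0)|\leq \frac{C}{R}\bigl(M_R-\phi(x_0)\bigr)\leq \frac{C}{R}\bigl(C_2(1+\log(1+R+|x_0|))-\phi(x_0)\bigr).
\end{equation*}
Sending $R\to\infty$, the right-hand side vanishes because $R^{-1}\log R\to 0$ while $\phi(x_0)$ is fixed; hence $\nabla\phi(x_0)=0$. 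Since $x_0$ was arbitrary, $\phi$ is constant on $\mathbb{R}^2$.

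The only delicate point is producing a nonnegative harmonic function to which the gradient estimate applies, which the subtraction $u_R=M_R-\phi$ accomplishes; the logarithmic growth is then just mild enough for the $R^{-1}$ prefactor to kill the $\log R$ term, which is precisely the sharp threshold that makes a one-sided Liouville theorem work in the two-dimensional setting.
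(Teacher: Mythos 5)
Your proof is correct. Note that the paper does not actually prove this lemma: it is quoted as classical with a citation to Protter--Weinberger, so there is no in-paper argument to compare against line by line. Your route --- fixing $x_0$, subtracting $\phi$ from $M_R=\sup_{B_R(x_0)}\phi$ to get a nonnegative harmonic function, and applying the interior gradient estimate $|\nabla u_R(x_0)|\le \frac{C}{R}u_R(x_0)$ (obtained from the mean value property of $\partial_i u_R$ plus the divergence theorem and the spherical mean value property) --- is a standard self-contained proof of this one-sided Liouville theorem, and the logarithmic bound $M_R\le C_2(1+\log(1+R+|x_0|))$ is exactly weak enough for the $R^{-1}$ factor to force $\nabla\phi(x_0)=0$. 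This is essentially equivalent to the classical Harnack-inequality argument (apply Harnack to $M_{2R}-\phi$ on $B_{2R}$ to upgrade the one-sided bound to a two-sided $O(\log R)$ bound, then use interior derivative estimates); your version shortcuts the two-sided step by exploiting that only an upper bound on the spherical average of $u_R$ is needed. Two cosmetic points you may wish to tighten: the componentwise estimate gives $|\partial_i u_R(x_0)|\le \frac{2}{R}u_R(x_0)$ in the plane, so the gradient bound carries an extra factor $\sqrt{2}$ absorbed into your constant $C$; and the nonnegativity of $u_R$ on the closed ball (hence on $\partial B_R(x_0)$, where it is actually used) follows from continuity of $\phi$, since the supremum over the open ball equals the maximum over its closure.
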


\begin{Prop} \label{pro201}
There exists $\tilde{w}$, $\tilde{f}$ $\in C^2(\mathbb R^2)$ such that
\begin{equation} \label{eq210}
\tilde{w}_k \to \tilde{w}, \quad \tilde{f}_k \to \tilde{f} \quad {\rm in} \hspace{2mm} C^2_{loc}(\mathbb R^2),
\end{equation}
and 
\begin{equation} \label{eq212}
-\Delta \tilde{w}=\tilde{f}\not\equiv0, \quad \tilde{w} \leq \tilde{w}(0)=0, \quad 0\leq \tilde{f} \leq {\overline\lambda}\int_{I_+} \beta \mathcal P(d\alpha)\quad {\rm in} \hspace{2mm} \mathbb R^N,
\end{equation}
\begin{equation*}
\int_{\mathbb R^2} e^{\tilde{w}} \leq1, \quad \int_{\mathbb R^2} \tilde{f} \leq {\overline\lambda} \int_{I_+} \beta\mathcal P(d\beta).
\end{equation*}
In addition, for $x \in \mathbb R^2$,
\begin{equation} \label{eq214}
\tilde{w}(x) \geq -\frac{\beta_0}{2\pi}\log(|x|+1)+\frac{1}{2\pi}\int_{\mathbb R^2} \tilde{f}(y) \log\frac{|y|}{1+|y|}
\end{equation}
where $\beta_0=\int_{\mathbb R^2} \tilde{f}(y)dy$.
\end{Prop}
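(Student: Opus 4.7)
I would begin with a uniform $L^\infty$ bound on $\tilde f_k$: by (2.3) and the fact that $x_k$ maximizes $v_k$, $\tilde w_{k,\beta}(x)\le \tilde w_{k,\beta}(0)\le \tilde w_{k,1}(0)=0$ on $B_{R_0\sigma_k^{-1}}$, hence $e^{\tilde w_{k,\beta}}\le 1$ and $0\le \tilde f_k\le \overline\lambda\int_{I_+}\beta\,\mathcal P(d\beta)$. For a fixed $R>0$ (with $k$ large enough that $B_R\subset B_{R_0\sigma_k^{-1}}$), I would upgrade this to an $L^\infty$ bound on $\tilde w_k$ itself by splitting $\tilde w_k=u_k+h_k$ on $B_R$, where $-\Delta u_k=\tilde f_k$ with $u_k|_{\partial B_R}=0$ (so $0\le u_k\le C$ by the maximum principle) and $h_k$ is harmonic: then $-h_k\ge 0$ on $\partial B_R$ (from $\tilde w_k\le 0$) and $(-h_k)(0)=u_k(0)\le C$, so the Harnack inequality yields $|\tilde w_k|\le C'$ on $B_{R/2}$. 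Standard $W^{2,p}$ theory and Schauder bootstrapping, using the identity $\tilde w_{k,\beta}(x)=\beta\tilde w_k(x)+\tilde w_{k,\beta}(0)$ to transfer the regularity of $\tilde w_k$ to $\tilde f_k$, then produce uniform $C^{2,\alpha}_{loc}$ bounds.

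Arzel\`a--Ascoli next extracts a subsequence with $\tilde w_k\to\tilde w$ in $C^2_{loc}(\mathbb R^2)$, and $\tilde f_k=-\Delta \tilde w_k\to -\Delta\tilde w=:\tilde f$; passing to pointwise limits in (2.5)--(2.7) and applying Fatou gives all the stated properties except $\tilde f\not\equiv 0$. For that last point I would argue by contradiction: if $\tilde f\equiv 0$, then $\tilde w$ is harmonic with maximum $0$ at the origin, so $\tilde w\equiv 0$ by the strong maximum principle, which forces $\int_{B_R}\tilde f_k\to 0$ for every fixed $R$. But the change of variables $y=x_k+\sigma_k z$ yields $\int_{B_R(0)}\tilde f_k=\mu_k(B_{R\sigma_k}(x_k))$, and choosing $R_k\to\infty$ with $R_k\sigma_k\to 0$ diagonally, the weak-$*$ convergence $\mu_k\overset{*}{\rightharpoonup}s\,dx+\sum_{x_0\in S}m(x_0)\delta_{x_0}$ together with $s\in L^1$ gives $\liminf_k \mu_k(B_{R_k\sigma_k}(x_k))\ge m(x_0)\ge 4\pi>0$, a contradiction.

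For the lower bound (2.14) I would set $z(x):=\frac{1}{2\pi}\int_{\mathbb R^2}\tilde f(y)\log\frac{|x-y|}{1+|y|}\,dy$, which is well-defined since $\tilde f\in L^1\cap L^\infty(\mathbb R^2)$ and satisfies $-\Delta z=-\tilde f$ by direct computation, so $h:=\tilde w+z$ is harmonic on $\mathbb R^2$. By Lemma 2.3, $z(x)\sim\frac{\beta_0}{2\pi}\log|x|$ at infinity, and combined with $\tilde w\le 0$ this yields $h(x)\le C_1(1+\log|x|)$ for $|x|$ large, so Lemma 2.4 forces $h\equiv h(0)=z(0)=\frac{1}{2\pi}\int\tilde f(y)\log\frac{|y|}{1+|y|}\,dy$. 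The elementary pointwise estimate $\frac{|x-y|}{1+|y|}\le 1+|x|$ (which follows from $|x-y|\le |x|+|y|$ and $\frac{|x|+|y|}{1+|y|}\le 1+|x|$) then gives $z(x)\le \frac{\beta_0}{2\pi}\log(1+|x|)$, and substituting into $\tilde w=h-z$ completes (2.14).

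The main obstacle is the nontriviality $\tilde f\not\equiv 0$: since the scaling $\sigma_k=e^{-w_k(0)/2}$ is tuned to $\beta=1$, if $\mathcal P$ has no mass near $1$ the rescaled profile $\tilde f_k$ might \emph{a priori} lose mass to infinity, and a quantitative use of the weak-$*$ convergence of $\mu_k$ near $x_0$ together with $s\in L^1$ is needed to guarantee that the singular mass $m(x_0)$ actually survives at scale $\sigma_k$.
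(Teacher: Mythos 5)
Your derivation of the uniform $L^\infty$ bound on $\tilde w_k$, the bootstrap, the Arzel\`a--Ascoli extraction, and the proof of the lower bound (\ref{eq214}) all match the paper's argument (same decomposition into the zero-boundary Dirichlet part plus a harmonic remainder, same Harnack step, same use of Lemmas \ref{lem203} and \ref{lem204}). The one place you deviate from the paper is the proof of $\tilde f\not\equiv 0$, and there you have a genuine gap.

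Your contradiction route asserts that, after choosing $R_k\to\infty$ with $R_k\sigma_k\to 0$, the weak-$*$ convergence $\mu_k\overset{*}{\rightharpoonup}s\,dx+m(x_0)\delta_{x_0}$ together with $s\in L^1$ forces $\liminf_k\mu_k(B_{R_k\sigma_k}(x_k))\ge m(x_0)$. That implication is false. Weak-$*$ convergence only yields lower semicontinuity on \emph{fixed} open sets; it gives no control whatsoever over the mass captured by a family of balls shrinking to $x_0$. Concretely, $\mu_k=\delta_{x_0+1/k}$ converges weak-$*$ to $\delta_{x_0}$, yet $\mu_k(B_{1/(2k)}(x_0))=0$ for every $k$. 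The fact that the concentration scale coincides with $\sigma_k$ is precisely part of what has to be proved, so it cannot be cited as a consequence of the weak-$*$ limit, and ``$s\in L^1$'' does not repair this since the problem is with the singular part, not the absolutely continuous one.

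The irony is that you had already assembled the pieces for the correct argument and then walked past them: once $\tilde w\equiv 0$, you immediately get $\int_{\mathbb R^2}e^{\tilde w}=\int_{\mathbb R^2}1=+\infty$, which contradicts the constraint $\int_{\mathbb R^2}e^{\tilde w}\le 1$ that you obtained by Fatou from $\int_{B_{R_0\sigma_k^{-1}}}e^{\tilde w_k}\le 1$. This is exactly what the paper does: $\tilde w$ harmonic and bounded above, so constant by the Liouville theorem, so identically $0$, so $\int e^{\tilde w}=+\infty>1$. No information about the weak-$*$ limit of $\mu_k$ is needed. Replacing your last step with this one-line observation closes the gap and makes your proof essentially identical to the paper's.
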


\begin{proof}
We have
\begin{equation} \label{eq216}
\tilde{w}_{k, \beta}(x) =\beta\tilde{w}_k(x)+(w_{k, \beta}(0)-w_k(0))
\end{equation}
for any $\beta \in I_+ \setminus\{0\}$, and also
\begin{equation} \label{eq218}
\tilde{w}_k \leq \tilde{w}_k(0)=0, \quad w_{k, \beta}(0) \leq w_k(0), \quad \beta \in I_+\setminus\{0\}
\end{equation}
by (\ref{eq200}). Hence $\tilde{f}_k=\tilde{f}_k(x)$ satisfies
\begin{equation} \label{eq220}
0 \leq \tilde{f}_k(x) \leq \lambda_k\int_{I_+} \beta\mathcal P(d\beta) \quad {\rm in} \hspace{2mm} B_{R_0 \sigma_k^{-1}}(0).
\end{equation}
Fix $L>0$ and decompose $\tilde{w}_k$, $k \gg 1$, as $\tilde{w}_k=\tilde{w}_{1, k}+\tilde{w}_{2, k}$ where $\tilde{w}_{j, k}$, $j=1, 2$, are the solutions to
\begin{align*}
-\Delta \tilde{w}_{1, k}&=\tilde{f}_k \quad {\rm in} \hspace{2mm} B_L, \quad \tilde{w}_{1, k}=0 \quad \hspace{2mm}{\rm on} \hspace{2.5mm} \partial B_L,\\
-\Delta \tilde{w}_{2, k}&=0 \quad \hspace{1.5mm}{\rm in} \hspace{2mm} B_L, \quad \tilde{w}_{2, k}=\tilde{w}_{k} \quad {\rm on} \hspace{2mm} \partial B_L.
\end{align*}

First, by (\ref{eq220}) and {elliptic regularity arguments}, there exists $C_{1, L}>0$ such that
\begin{equation*}
0 \leq \tilde{w}_{1, k} \leq C_{1, L} \quad {\rm on} \hspace{2mm} \overline B_L.
\end{equation*}
Next it follows from $\tilde{w}_k \leq 0$ that
\begin{equation*}
\tilde{w}_{2, k} \leq 0 \quad {\rm on} \hspace{2mm} \overline B_L.
\end{equation*}
Hence $\tilde{w}_{2, k}=\tilde{w}_{2, k}(x)$ is a negative harmonic function in $B_L$. Then the Harnack inequality yield $C_{2, L}>0$ such that
\begin{equation*}
\tilde{w}_{2, k} \geq -C_{2, L} \quad {\rm in} \hspace{2mm} \overline B_{L/2}.
\end{equation*}
We thus end up with 
\begin{equation} \label{eq222}
-C_{2, L} \leq \tilde{w}_k \leq \tilde{w}_k(0)=0 \quad {\rm in} \hspace{2mm} B_{L/2},
\end{equation}
and then {standard elliptic regularity arguments assure} the limit (\ref{eq210}) and (\ref{eq212}) thanks to (\ref{eq220}) and (\ref{eq222}).\

If $\tilde{f} \equiv 0$ then
\begin{equation*}
-\Delta \tilde{w}=0, \quad \tilde{w} \leq \tilde{w}(0) =0 \quad {\rm in} \hspace{2mm} \mathbb R^2, \quad \int_{\mathbb R^2} e^{\tilde{w}} \leq 1,
\end{equation*}
which is impossible by the Liouville theorem, and hence $\tilde{f} \not\equiv 0$.\

Since $\tilde{f} \in L^1 \cap L^{\infty}(\mathbb R^2)$, the function
\begin{equation}\label{eq224}
\tilde{z}(x)=\frac{1}{2\pi}\int_{\mathbb R^2} \tilde{f}(y)\log\frac{|x-y|}{1+|y|}dy
\end{equation}
is well-defined, and satisfies
\begin{equation}\label{eq226}
\frac{\tilde{z}(x)}{\log |x|} \to \frac{\beta_0}{2\pi}=\frac{1}{2\pi}\int_{\mathbb R^2} \tilde{f} \quad as \hspace{2mm} |x| \to \infty
\end{equation}
by Lemma \ref{lem203}. Also (\ref{eq226}) implies
\begin{equation*}
-\Delta \tilde{w}=\tilde{f}, \quad -\Delta \tilde{z}=-\tilde{f}, \quad \tilde{w} \leq \tilde{w}(0)=0 \quad {\rm in} \hspace{2mm} \mathbb R^2,
\end{equation*}
\begin{equation*}
\tilde{z}(x) \leq \Big(\frac{\beta_0}{2\pi}+1\Big)\log |x|, \quad x \in \mathbb R^2 \setminus B_r
\end{equation*}
for some $r>0$ by (\ref{eq226}). Hence we obtain $\tilde{u} \equiv \tilde{w}+\tilde{z} \equiv$ constant by Lemma \ref{lem204}. Since $\tilde{w}(0)=0$ it holds that
\begin{equation} \label{eq228}
\tilde{w}(x)=-\tilde{z}+\tilde{z}(0).
\end{equation}
Now we note
\begin{align*}
\tilde{z}(x) &\leq\frac{1}{2\pi} \int_{\mathbb R^2} \tilde{f} \log\frac{|x|+|y|}{1+|y|}dy \\
&\leq \log(1+|x|)\cdot \frac{1}{2\pi} \int_{\mathbb R^2} \tilde{f}=\frac{\beta_0}{2\pi}\log(1+|x|)
\end{align*}
by $\tilde{f} \geq 0$. Hence, $\tilde{w}(x) \geq -\frac{\beta_0}{2\pi}\log(1+|x|)+\tilde{z}(0)$, and the proof is completed. 
\end{proof}

Next we focus on the quantity $\beta_0=\int_{\mathbb R^2} \tilde{f}$. 

\begin{Lem}\label{lem206}
For any bounded open set $\omega \subset \mathbb R^2$, there exists $\tilde{\zeta}^{\omega}=\tilde{\zeta}^{\omega}(d\beta) \in \mathcal M(I_+)$ such that
\begin{equation}\label{eq236}
\Big(\int_{\omega} e^{\tilde{w}_{k, \beta}}dx \Big)\mathcal P(d\beta) \overset{*}{\rightharpoonup} \tilde{\zeta}^{\omega}(d\beta) \quad {\rm in} \hspace{2mm} \mathcal M(I_+).
\end{equation}
Furthermore, there exists $\tilde{\psi}^{\omega} \in L^1(I_+, \mathcal P)$ such that 0 $\leq \tilde{\psi}^{\omega} \leq 1$ $\mathcal P$-a.e. on $I_+$ and
\begin{equation*}
\tilde{\zeta}^{\omega}(\eta)=\int_{\eta} \tilde{\psi}^{\omega}(\beta) \mathcal P(d\beta)
\end{equation*}
for any Borel set $\eta \subset I_+$.
\end{Lem}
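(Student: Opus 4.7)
The statement is purely functional-analytic, so the plan is to apply Banach-Alaoglu to extract a weak-$*$ convergent subsequence and then Radon-Nikodym to identify the limit as an absolutely continuous measure with respect to $\mathcal{P}$.

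First I introduce the density $g_k(\beta) := \int_{\omega} e^{\tilde{w}_{k,\beta}}\,dx$, so that the measure appearing in (\ref{eq236}) is $\mu_k^{\omega}(d\beta) = g_k(\beta)\mathcal{P}(d\beta)$. The crucial observation is the uniform pointwise bound $0 \leq g_k(\beta) \leq 1$ for every $\beta \in I_+\setminus\{0\}$ and all sufficiently large $k$, which follows from (\ref{eq207}): since $\omega$ is bounded and $\sigma_k \to 0$, one has $\omega \subset B_{R_0\sigma_k^{-1}}(0)$ for $k$ large, and therefore $g_k(\beta) \leq \int_{B_{R_0\sigma_k^{-1}}(0)} e^{\tilde{w}_{k,\beta}}\,dx \leq 1$. (The point $\beta = 0$ is harmless because $e^{\tilde{w}_{k,0}} = \sigma_k^2/|\Omega| \to 0$.) In particular $\mu_k^{\omega}(I_+) \leq \mathcal{P}(I_+) = 1$, so the total variation norms are uniformly bounded in $k$. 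Since $I_+ = [0,1]$ is a compact metric space, $C(I_+)$ is separable and $\mathcal{M}(I_+) = C(I_+)^{*}$, so Banach-Alaoglu (together with metrizability of bounded sets in the weak-$*$ topology) produces a subsequence (not relabeled) and some $\tilde{\zeta}^{\omega} \in \mathcal{M}(I_+)$ with $\mu_k^{\omega} \overset{*}{\rightharpoonup} \tilde{\zeta}^{\omega}$, which is exactly (\ref{eq236}).

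For the second claim, I test against an arbitrary non-negative $\phi \in C(I_+)$:
\[
0 \leq \int_{I_+} \phi(\beta) g_k(\beta)\,\mathcal{P}(d\beta) \leq \int_{I_+} \phi(\beta)\,\mathcal{P}(d\beta).
\]
Passing $k \to \infty$ preserves both inequalities, showing that $0 \leq \tilde{\zeta}^{\omega} \leq \mathcal{P}$ as Borel measures on $I_+$. The Radon-Nikodym theorem then yields $\tilde{\psi}^{\omega} \in L^1(I_+, \mathcal{P})$ with $0 \leq \tilde{\psi}^{\omega} \leq 1$ $\mathcal{P}$-a.e.\ satisfying $\tilde{\zeta}^{\omega}(\eta) = \int_{\eta} \tilde{\psi}^{\omega}(\beta)\,\mathcal{P}(d\beta)$ for every Borel set $\eta \subset I_+$. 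The main (and essentially only) obstacle is securing the uniform bound $g_k \leq 1$, which rests on the scaled normalization (\ref{eq207}) and on $\omega$ eventually fitting inside $B_{R_0\sigma_k^{-1}}(0)$; once this is in hand the remainder is textbook functional analysis.
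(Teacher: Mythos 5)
Your proof is correct and follows the paper's own route: uniform total-variation bound on $\mu_k^\omega$ gives weak-$*$ compactness, and the pointwise bound $g_k(\beta)\le 1$ (via $\omega\subset B_{R_0\sigma_k^{-1}}$ for large $k$) yields $\tilde\zeta^\omega\le\mathcal P$, after which Radon--Nikod\'ym finishes. The only cosmetic difference is that you pass from the testing inequality $\int\phi\,d\tilde\zeta^\omega\le\int\phi\,d\mathcal P$ for $0\le\phi\in C(I_+)$ directly to $\tilde\zeta^\omega(\eta)\le\mathcal P(\eta)$ for all Borel $\eta$, whereas the paper spells out the underlying regularity/Urysohn argument; both rest on the same key pointwise bound.
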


\begin{proof}
Given bounded open set $\omega \subset \mathbb R^2$, we have
\begin{equation*}
\int_{I_+} \Big(\int_{\omega} e^{\tilde{w}_{k, \beta}} dx \Big) \mathcal P(d\beta) \leq 1.
\end{equation*}
Hence it holds that
\begin{equation} \label{eq238}
\Big(\int_{\omega} e^{\tilde{w}_{k, \beta}}dx \Big)\mathcal P(d\beta) \overset{*}{\rightharpoonup} \tilde{\zeta}^{\omega}(d\beta) \quad {\rm in} \hspace{2mm} \mathcal M(I_+).
\end{equation}
Now we shall show that the limit measure $\tilde{\zeta}^{\omega}=\tilde{\zeta}^{\omega}(d\beta) \in \mathcal M(I_+)$ is absolutely continuous with respect to $\mathcal P$.\

Let $\eta \subset I_+$ be a Borel set and $\epsilon>0$. Then each compact set $K \subset \eta$ admits an open set $J \subset I_+$ such that 
\begin{equation*}
K \subset \eta \subset J, \quad \mathcal P(J) \leq \epsilon+ \mathcal P(K).
\end{equation*}
Now we take $\varphi \in C(I_+)$ satisfying
\begin{equation*}
\varphi=1 \quad {\rm on} \hspace{2mm} K, \quad 0 \leq \varphi \leq 1 \quad {\rm on} \hspace{2mm} I_+, \quad {\rm supp}\varphi \subset J.
\end{equation*}
Then (\ref{eq238}) implies 
\begin{align*}
\tilde{\zeta}^{\omega}(K) &=\int_{K} \tilde{\zeta}^{\omega}(d\beta) \leq \int_{I_+} \varphi(\beta) \tilde{\zeta}^{\omega}(d\beta)\\
&=\lim_{k \to \infty} \int_{I_+} \varphi(\beta) \Big( \int_{\omega} e^{\tilde{w}_{k, \beta}}\Big)\mathcal P(d\beta) \leq \int_{I_+} \varphi(\beta) \mathcal P(d\beta)\\
& \leq \int_{J} \mathcal P(d\beta)=\mathcal P(J) \leq \epsilon+ \mathcal P(\eta),
\end{align*}
and therefore
\begin{equation*}
0 \leq \tilde{\zeta}^{\omega}(\eta) = \sup\{ \tilde{\zeta}^{\omega}(K) \mid K \subset \eta : {\rm compact} \} \leq \epsilon+\mathcal P(\eta).
\end{equation*}
This shows the absolute continuity of $\tilde{\zeta}^{\omega}$ with respect to $\mathcal P$. Therefore, by the Radon-Nikod$\rm \acute{y}$m theorem, there exists $\tilde{\psi}^{\omega} \in L^1(I_+, \mathcal P)$ such that 0 $\leq \tilde{\psi}^{\omega} \leq 1$ $\mathcal P$-a.e. on $I_+$ and
\begin{equation*}
\tilde{\zeta}^{\omega}(\eta)=\int_{\eta} \tilde{\psi}^{\omega}(\beta) \mathcal P(d\beta)
\end{equation*}
for any Borel set $\eta \subset I_+$. 
\end{proof}

\begin{Prop} \label{pro203}
There exists $\tilde{\psi} \in L^1(I_+, \mathcal P)$ and $0 \leq \tilde{\psi}(\beta) \leq 1$ $\mathcal P$-a.e $\beta$ such that
\begin{equation} \label{eq240}
\int_{\mathbb R^2} \tilde{f} dy=\overline\lambda \int_{I_+} \beta \tilde{\psi}(\beta) \mathcal P(d\beta).
\end{equation}
\end{Prop}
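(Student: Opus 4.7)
The plan is to derive the claim by localizing on bounded open sets, applying Lemma \ref{lem206} together with the local convergence (\ref{eq210}), and then exhausting $\mathbb R^2$ by an increasing sequence of balls.

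First, for any bounded open $\omega \subset \mathbb R^2$ and $k$ large enough that $\omega \subset B_{R_0\sigma_k^{-1}}(0)$, Fubini--Tonelli applied to the nonnegative integrand in (\ref{eq209}) gives
\begin{equation*}
\int_{\omega} \tilde{f}_k \, dx = \lambda_k \int_{I_+} \beta \Bigl(\int_{\omega} e^{\tilde{w}_{k,\beta}} \, dx\Bigr) \mathcal P(d\beta).
\end{equation*}
The $C^2_{loc}$ convergence $\tilde{f}_k \to \tilde{f}$ from (\ref{eq210}) implies $\int_{\omega} \tilde{f}_k \to \int_{\omega} \tilde{f}$. On the right-hand side, $\lambda_k \to \overline\lambda$; the measures $\bigl(\int_{\omega} e^{\tilde{w}_{k,\beta}} dx\bigr)\mathcal P(d\beta)$ converge weakly-$*$ in $\mathcal M(I_+)$ to $\tilde{\psi}^{\omega}(\beta) \mathcal P(d\beta)$ by Lemma \ref{lem206}; and $\beta \mapsto \beta$ is continuous on the compact set $I_+$. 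Passing to the limit therefore yields the localized identity
\begin{equation*}
\int_{\omega} \tilde{f} \, dx = \overline\lambda \int_{I_+} \beta \tilde{\psi}^{\omega}(\beta) \mathcal P(d\beta).
\end{equation*}

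Next, for $\omega_1 \subset \omega_2$ the inequality $\int_{\omega_1} e^{\tilde{w}_{k,\beta}} \leq \int_{\omega_2} e^{\tilde{w}_{k,\beta}}$ holds for every $\beta$, and integrating against an arbitrary nonnegative $\varphi \in C(I_+)$ and passing to the limit in $k$ yields $\int_{I_+} \varphi \, d\tilde{\zeta}^{\omega_1} \leq \int_{I_+} \varphi \, d\tilde{\zeta}^{\omega_2}$; hence $\tilde{\zeta}^{\omega_1} \leq \tilde{\zeta}^{\omega_2}$ as Radon measures, so $\tilde{\psi}^{\omega_1} \leq \tilde{\psi}^{\omega_2}$ $\mathcal P$-a.e. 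Taking the exhaustion $\omega_n = B_n(0)$ and redefining the countably many Radon--Nikod\'ym representatives on a single $\mathcal P$-null set to obtain a pointwise nondecreasing sequence in $[0,1]$, set
\begin{equation*}
\tilde{\psi}(\beta) := \lim_{n \to \infty} \tilde{\psi}^{\omega_n}(\beta) \in [0,1] \quad \mathcal P\text{-a.e.}
\end{equation*}
Then $\tilde{\psi} \in L^1(I_+, \mathcal P)$ since $\mathcal P$ is a probability measure.

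Finally, $\tilde{f} \geq 0$ gives $\int_{\omega_n} \tilde{f} \to \int_{\mathbb R^2} \tilde{f}$ by monotone convergence, while on the right-hand side the bounded monotone convergence of $\beta \tilde{\psi}^{\omega_n}(\beta)$ yields $\int_{I_+} \beta \tilde{\psi}^{\omega_n} \mathcal P(d\beta) \to \int_{I_+} \beta \tilde{\psi} \mathcal P(d\beta)$; combining these with the localized identity above establishes (\ref{eq240}). The only mildly delicate point is the a.e.\ consistency of the Radon--Nikod\'ym densities across the exhaustion, but since only countably many sets are involved this is standard.
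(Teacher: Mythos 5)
Your proof is correct and follows essentially the same route as the paper: localize on bounded open sets, pass to the limit via Lemma \ref{lem206} and the $C^2_{\mathrm{loc}}$ convergence in (\ref{eq210}) to get the identity $\int_{\omega}\tilde{f}\,dx=\overline\lambda\int_{I_+}\beta\,\tilde{\psi}^{\omega}(\beta)\,\mathcal P(d\beta)$, then use monotonicity in $\omega$ together with the monotone convergence theorem on both sides while exhausting $\mathbb R^2$ by balls. You merely spell out a few steps the paper leaves implicit (the Fubini--Tonelli exchange and the a.e.\ consistency of the Radon--Nikod\'ym representatives), which is fine.
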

\begin{proof}
$\omega$ and $\tilde{\psi}^{\omega}$ as in Lemma \ref{lem206}. Taking $R_j \uparrow +\infty$ and $\omega_j=B_{R_j}$, by the monotonicity of $\tilde{\psi}^{\omega}$ with respect to $\omega$, there exists $\tilde{\zeta} \in \mathcal M(I_+)$ and $\tilde{\psi} \in L^1(I_+, \mathcal P)$ such that 
\begin{align*}
0&\leq \tilde{\psi}(\beta) \leq 1, \quad \mathcal P{\mathchar `-}a.e. \hspace{1mm}\beta\\
0&\leq \tilde{\psi}^{\omega_1}(\beta)\leq \tilde{\psi}^{\omega_2}(\beta)\leq\cdot\cdot\cdot \to \tilde{\psi}(\beta), \quad \mathcal P{\mathchar `-}a.e. \hspace{1mm} \beta\\
\tilde{\zeta}(\eta)&=\int_{\eta} \tilde{\psi}(\beta)\mathcal P(d\beta)\quad {\rm for \hspace{1mm}any \hspace{1mm}Borel \hspace{1mm}set}\hspace{1mm}\eta \subset I_+.
\end{align*}
First, (\ref{eq210}) implies
\begin{equation*}
\overline\lambda \int_{I_+} \beta\tilde{\psi}^{\omega_j}(\beta) \mathcal P(d\beta)=\lim_{k \to \infty} \lambda_k \int_{I_+} \beta \Big(\int_{\omega_j} e^{\tilde{w}_{k, \beta}} dx \Big) \mathcal P(d\beta)=\int_{\omega_j} \tilde{f}.
\end{equation*}
Then we obtain
\begin{equation} \label{eq240'}
\beta_0:= \int_{\mathbb R^2} \tilde{f} = \overline\lambda \int_{I_+} \beta \tilde{\psi}(\beta) \mathcal P(d\beta)
\end{equation}
{by the monotone convergence theorem.}
\end{proof}

Let
\begin{equation} \label{eq230}
\mathcal B=\{\beta \in {\rm supp} \mathcal P \mid \limsup_{k \to \infty} (w_{k, \beta}(0)-w_k(0))>-\infty \}.
\end{equation}
From the proof of Proposition \ref{pro201}, it follows that if $\mathcal P(\mathcal B)=0$ then $\tilde{f} \equiv 0$, a contradiction. Hence $\mathcal P(\mathcal B)>0$, and the value 
\begin{equation} \label{eq232}
\beta_{inf}=\inf_{\beta \in \mathcal B} \beta
\end{equation}
is well-defined. Then we find 
\begin{equation} \label{eq234}
\mathcal B=I_{inf} \cap {\rm supp}\mathcal P
\end{equation}
by the monotonicity (\ref{eq200}), where
\begin{equation*}
  I_{inf} = \begin{cases}
    [\beta_{inf}, 1] &{\rm if} \hspace{2mm} \beta_{inf}\in  \mathcal B,\\
    (\beta_{inf}, 1] &{\rm if} \hspace{2mm} \beta_{inf}\not\in  \mathcal B.
  \end{cases}
\end{equation*}
\begin{Lem} \label{lem208}
For any $\beta \in I_{inf}$, it holds that
\begin{equation*}
\beta> \frac{4\pi}{\beta_0}.
\end{equation*}
\end{Lem}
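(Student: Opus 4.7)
The plan is to combine the pointwise lower bound (\ref{eq214}) for $\tilde{w}$ with an $L^1$-type upper bound on $e^{\beta \tilde{w}}$ extracted from (\ref{eq207}), and to read off the sharp threshold $4\pi/\beta_0$ from the tail behaviour at infinity.

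Fix $\beta \in \mathcal{B}$. By the definition (\ref{eq230}), $\limsup_k (w_{k,\beta}(0) - w_k(0)) > -\infty$, so after passing to a sub-subsequence of the one producing $\tilde{w}$ in Proposition \ref{pro201} we may assume $w_{k,\beta}(0) - w_k(0) \to c_\beta$ for some finite $c_\beta \leq 0$ (non-positivity following from the monotonicity (\ref{eq200})). Combining the identity (\ref{eq216}) with the $C^2_{loc}$ convergence $\tilde{w}_k \to \tilde{w}$, one gets $\tilde{w}_{k,\beta} \to \beta \tilde{w} + c_\beta$ locally uniformly on $\mathbb{R}^2$. Applying Fatou's lemma to the bound $\int_{B_R} e^{\tilde{w}_{k,\beta}} \leq 1$ from (\ref{eq207}) on each fixed ball and sending $R \to \infty$ yields
$$\int_{\mathbb{R}^2} e^{\beta \tilde{w}(x)}\,dx \leq e^{-c_\beta} < \infty.$$

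On the other hand, (\ref{eq214}) gives $\tilde{w}(x) \geq -\frac{\beta_0}{2\pi}\log(1+|x|) + c_0$ for a finite constant $c_0$, so $e^{\beta \tilde{w}(x)} \geq e^{\beta c_0}(1+|x|)^{-\beta\beta_0/(2\pi)}$ on $\mathbb{R}^2$. Integrating in polar coordinates, finiteness of the integral of the right-hand side over $\mathbb{R}^2$ forces $\beta\beta_0/(2\pi) > 2$, i.e.\ $\beta > 4\pi/\beta_0$, proving the lemma for every $\beta \in \mathcal{B}$.

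For $\beta \in I_{inf} \setminus \mathcal{B}$, the definition of $I_{inf}$ together with (\ref{eq234}) forces $\beta > \beta_{inf}$; since $\beta_{inf} = \inf \mathcal{B}$, one may choose $\beta' \in \mathcal{B}$ with $\beta' < \beta$ and conclude $\beta > \beta' > 4\pi/\beta_0$ from the preceding case. The main delicate point is the simultaneous subsequence extraction guaranteeing both the $C^2_{loc}$ convergence of $\tilde{w}_k$ and the numerical convergence of $w_{k,\beta}(0) - w_k(0)$; once a common subsequence is fixed, everything else reduces to matching the upper and lower bounds coming from Proposition \ref{pro201}.
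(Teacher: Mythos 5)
Your proof is correct and follows essentially the same route as the paper: extract a finite limit $c_\beta$ of $w_{k,\beta}(0)-w_k(0)$ for $\beta\in\mathcal B$, pass the mass constraint (\ref{eq207}) to the limit to get $\int_{\mathbb R^2}e^{\beta\tilde w}<\infty$, and then compare against the pointwise lower bound (\ref{eq214}) to read off $\beta\beta_0/(2\pi)>2$. The one cosmetic difference is that you use the scaling identity (\ref{eq216}) to identify the limit of $\tilde w_{k,\beta}$ directly as $\beta\tilde w+c_\beta$, whereas the paper re-runs the Proposition~\ref{pro201} machinery to produce a separate function $\tilde w_\beta$ and restate the analogue (\ref{eq242}) of (\ref{eq214}) for it; these are the same object up to an additive constant, so both approaches yield the same decay rate at infinity. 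Your explicit handling of $\beta\in I_{inf}\setminus\mathcal B$ by picking $\beta'\in\mathcal B$ with $\beta'<\beta$ fills in a step the paper leaves implicit, and is a small improvement in rigor.
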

\begin{proof}
By the definition, every $\beta \in \mathcal B$ admits a subsequence such that $\tilde{w}_{k, \beta}(0)=w_{k, \beta}(0)-w_k(0)=O(1)$. From (\ref{eq216}), $\tilde{w}_{k, \beta}$ satisfies
\begin{equation*}
-\Delta\tilde{w}_{k, \beta}=\beta(-\Delta\tilde{w}_{k})=\beta \tilde{f}_k.
\end{equation*}
By the argument developed for the proof of (\ref{eq210})-(\ref{eq214}), we have $\tilde{w}_{\beta}=\tilde{w}_{\beta}(x) \in C^2(\mathbb R^2)$ such that
\begin{equation*}
\tilde{w}_{k, \beta} \to \tilde{w}_{\beta} \quad {\rm in} \hspace{2mm} C^2_{loc}(\mathbb R^2).
\end{equation*}
The limit $\tilde{w}_{\beta}$ satisfies
\begin{equation*}
-\Delta \tilde{w}_{\beta}=\beta \tilde{f}, \quad \tilde{w}_{\beta} \leq \tilde{w}_{\beta}(0)=0 \quad {\rm in }\hspace{2mm} \mathbb R^2, \quad \int_{\mathbb R^2} e^{\tilde{w}_{\beta}}\leq 1
\end{equation*}
and 
\begin{equation} \label{eq242}
\tilde{w}_{\beta}(x) \geq -\beta \frac{\beta_0}{2\pi}\log(1+|x|)+\frac{\beta}{2\pi}\int_{\mathbb R^2} \tilde{f}(y) \log\frac{|y|}{1+|y|}
\end{equation}
with $\tilde{f}=\tilde{f}(x)$ given in Proposition \ref{pro201}.\

Since $\tilde{f} \in L^1 \cap L^{\infty}(\mathbb R^2)$ and $\int_{\mathbb R^2} e^{\tilde{w}_\beta}<+\infty$ for any $\beta \in I_{inf}$, we obtain $\beta>4\pi/\beta_0$. 
\end{proof}

Similarly to \cite{key010}, on the other hand, we have the following lemma, where $(r, \theta)$ denotes the polar coordinate in $\mathbb R^2$.
\begin{Lem} \label{lem210}
We have 
\begin{equation*}
\lim_{r\to+\infty}r\tilde{w}_r=-\frac{\beta_0}{2\pi}, \quad \lim_{r\to+\infty}\tilde{w}_{\theta}=0
\end{equation*}
uniformly in $\theta$.
\end{Lem}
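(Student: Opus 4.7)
The plan is to work from the explicit Newton potential representation $\tilde{w}(x) = \tilde{z}(0) - \tilde{z}(x)$ in (\ref{eq228}), combined with polynomial decay of $\tilde{f}$ at infinity. Passing to the limit in $\tilde{f}_k = \lambda_k\int_{I_+}\beta e^{\tilde{w}_{k,\beta}}\mathcal{P}(d\beta)$ yields the pointwise identity $\tilde{f}(x) = \overline{\lambda}\int_{I_{inf}}\beta e^{\tilde{w}_\beta(x)}\mathcal{P}(d\beta)$, and since $\tilde{w}_\beta = \beta\tilde{w} + O(1)$ with $\tilde{w}(x)/\log|x| \to -\beta_0/(2\pi)$ by (\ref{eq228}) and Lemma \ref{lem203}, Lemma \ref{lem208} forces $\tilde{f}(y) = O(|y|^{-p})$ for some $p > 2$.

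Given this decay, I would differentiate under the integral to obtain
\begin{equation*}
\nabla\tilde{w}(x) = -\frac{1}{2\pi}\int_{\mathbb{R}^2}\tilde{f}(y)\frac{x-y}{|x-y|^2}\,dy,
\end{equation*}
and then establish the asymptotic expansion
\begin{equation*}
\nabla\tilde{w}(x) = -\frac{\beta_0}{2\pi}\cdot\frac{x}{|x|^2} + o\Bigl(\frac{1}{|x|}\Bigr),\quad |x|\to\infty,
\end{equation*}
uniformly in $\theta = \arg x$. The natural split is $\mathbb{R}^2 = \{|y|\leq|x|/2\}\cup\{|x|/2<|y|\leq 2|x|\}\cup\{|y|>2|x|\}$. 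On the inner region the expansion $\frac{x-y}{|x-y|^2} = \frac{x}{|x|^2} + O(|y|/|x|^2)$ extracts the leading term through $\int_{|y|<|x|/2}\tilde{f}\to\beta_0$, leaving a remainder $O\bigl(|x|^{-2}\int_{|y|<|x|/2}|y|\tilde{f}(y)\,dy\bigr) = o(|x|^{-1})$ by $p>2$. On the outer region the crude bound $|(x-y)/|x-y|^2|\leq 2/|y|$ and the tail estimate $\int_{|y|>R}\tilde{f}(y)/|y|\,dy = O(R^{1-p})$ give $o(|x|^{-1})$. On the middle annulus, the pointwise bound $\sup_{|y|\geq|x|/2}\tilde{f}(y) = O(|x|^{-p})$ together with the 2D local integrability $\int_A 1/|x-y|\,dy = O(|x|)$ delivers $O(|x|^{1-p}) = o(|x|^{-1})$.

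The two desired conclusions follow immediately. Since $r\tilde{w}_r = x\cdot\nabla\tilde{w}$, dotting with $x$ gives $r\tilde{w}_r = -\beta_0/(2\pi) + o(1)$. Writing $\tilde{w}_\theta = x^\perp\cdot\nabla\tilde{w}$ with $x^\perp = (-x_2,x_1)$ and using $x^\perp\cdot x \equiv 0$ yields $\tilde{w}_\theta = o(1)$. Uniformity in $\theta$ is built into each piece of the estimates above, since all bounds depend only on $|x|$.

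The main obstacle is the middle annulus $\{|x|/2\leq|y|\leq 2|x|\}$, where the 2D Newton kernel has a local singularity at $y=x$ and direct pointwise control of the integrand diverges. The remedy is to couple the polynomial decay of $\tilde{f}$ (which makes $\tilde{f}$ uniformly small there) with the local integrability of $1/|x-y|$ in $\mathbb{R}^2$, so that even though the integral of the kernel grows, its product with the small sup of $\tilde{f}$ decays faster than $1/|x|$.
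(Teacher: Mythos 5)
Your argument is correct and rests on the same two ingredients the paper uses: the Newton potential representation $\tilde{w}=-\tilde z+\tilde z(0)$ from (\ref{eq228}) (via Lemmas~\ref{lem203}, \ref{lem204}), and the polynomial decay of the source term that ultimately comes from Lemma~\ref{lem208}. The organization differs in two small but worth-noting ways. First, you extract the leading term $-\frac{\beta_0}{2\pi}\frac{x}{|x|^2}$ by Taylor-expanding the kernel $\frac{x-y}{|x-y|^2}$ on the inner region $|y|\le|x|/2$, whereas the paper pulls it out exactly via the algebraic identity $x\cdot(x-y)=|x-y|^2+y\cdot(x-y)$, reducing the task to showing the two remainder integrals $I_1$, $I_2$ (split by $|x-y|\gtrless|x|/2$) vanish. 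Second, for the region far from the singularity the paper simply invokes dominated convergence with $\tilde f\in L^1$ and the bound $|y|/|x-y|\le 3$, whereas you work out explicit $o(|x|^{-1})$ rates on your inner and outer regions using $\tilde f=O(|y|^{-p})$, $p>2$; this is slightly more than is needed there, though correct. Your handling of the middle annulus is essentially the paper's $I_2$ estimate: decay of the integrand (from Lemma~\ref{lem208}) against the local integrability of $1/|x-y|$ in $\mathbb R^2$. One small technical remark: your step ``passing to the limit in $\tilde f_k$ yields $\tilde f(x)=\overline\lambda\int_{I_{inf}}\beta e^{\tilde w_\beta(x)}\mathcal P(d\beta)$'' requires justifying the limit inside the $\mathcal P(d\beta)$ integral (a diagonal subsequence in $\beta$ plus dominated convergence, using (\ref{eq216}), (\ref{eq218})); the paper sidesteps this by keeping $k$ finite, bounding $e^{\tilde w_{k,\beta}(y)}\le e^{\beta(-\tilde z(y)+\tilde z(0))+o(1)}$ via (\ref{eq251}), and only then passing to the limit. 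The pointwise bound $\tilde f(y)=O(|y|^{-(2+3\delta_0)})$ you want does appear in the paper as (\ref{eq344}), but only in Section~3 under the assumptions of Theorem~\ref{th102}; in the generality of Section~2 you should either reproduce that derivation carefully or, more economically, mimic the paper's route of bounding $e^{\tilde w_{k,\beta}}$ directly before integrating.
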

\begin{proof}
From (\ref{eq224}) and (\ref{eq228}), it follows that
\begin{align*}
r\tilde{w}_r(x)&=-\frac{\beta_0}{2\pi}-\frac{1}{2\pi}\int_{\mathbb R^2} \frac{y \cdot (x-y)}{|x-y|^2}\tilde{f}(y) dy,\\
\tilde{w}_{\theta}(x)&=\frac{1}{2\pi}\int_{\mathbb R^2} \frac{\overline y \cdot (x-y)}{|x-y|^2}\tilde{f}(y) dy, \quad \overline y=(y_2, -y_1).
\end{align*}
Hence it suffices to show 
\begin{equation*}
\lim_{|x| \to +\infty} I_1(x)=\lim_{|x| \to +\infty} I_2(x)=0,
\end{equation*}
where
\begin{equation*}
I_1(x)=\int_{|x-y|>|x|/2} \frac{|y|}{|x-y|}\tilde{f}(y)dy, \quad I_2(x)=\int_{|x-y|\leq |x|/2} \frac{|y|}{|x-y|}\tilde{f}(y)dy.
\end{equation*}
Since $\tilde{f} \in L^1(\mathbb R^2)$, we have $\displaystyle \lim_{|x|\to+\infty}I_1(x)=0$ by the dominated convergence theorem.\

Next, (\ref{eq210}) implies
\begin{align*}
I_2(x)&=\lim_{k \to \infty} \int_{|x-y|\leq|x|/2} \frac{|y|}{|x-y|} \Big(\lambda_k \int_{I_+} \beta e^{\tilde{w}_{k, \beta}(y)}\mathcal P(d\beta)\Big)dy\\
&={\overline\lambda} \lim_{k \to \infty}\int_{[\beta_{inf}, 1)} \beta \Big(\int_{|x-y|\leq|x|/2} \frac{|y|}{|x-y|}e^{\tilde{w}_{k, \beta}} dy\Big)\mathcal P(d\beta),
\end{align*}
recalling (\ref{eq230}) and (\ref{eq232}). Now we use (\ref{eq216}), (\ref{eq218}) and (\ref{eq210}) with (\ref{eq228}), to confirm
\begin{equation} \label{eq251}
\tilde{w}_{k, \beta}(x) \leq \beta \tilde{w}_k(x) =\beta(-\tilde{z}(x)+\tilde{z}(0))+o(1)
\end{equation}
as $k \to \infty$, locally uniformly in $x \in \mathbb R^2$. Hence it holds that
\begin{equation*}
0 \leq I_2(x) \leq C_4\int_{|x-y|\leq |x|/2} \frac{|y|}{|x-y|} \cdot \int_{[\beta_{inf}, 1]} e^{-\beta \tilde{z}(y)}\mathcal P(d\beta)dy.
\end{equation*}
Then (\ref{eq226}) and Lemma \ref{lem208} imply
\begin{equation*}
0 \leq I_2(x) \leq C_5 |x|^{-(1+\epsilon_0)} \int_{|x-y|\leq |x|/2} \frac{dy}{|x-y|} \leq C_6 |x|^{-(2+\epsilon_0 )}
\end{equation*}
with some $\epsilon_0>0$, where we have used 
\begin{equation*}
|x-y| \leq \frac{|x|}{2} \Rightarrow \frac{1}{2} \leq |y| \leq \frac{3}{2}
\end{equation*}
Hence $\lim_{|x|\to\infty} I_2(x)=0$ follows.

\end{proof}

The Pohozaev identity 
\begin{align}\label{eq253}
R\int_{\partial B_R}\frac{1}{2}|\nabla u|^2-u_r^2 ds&= R\int_{\partial B_R} A(x) F(u) ds \nonumber \\
&-\int_{B_R} 2A(x)F(u)+F(u)(x \cdot \nabla A(x)) dx
\end{align}
is valid to $u=u(x) \in C^2(\overline B_R)$ satisfying
\begin{equation}\label{eq255}
-\Delta u=A(x)F'(u) \quad {\rm in} \hspace{2mm} B_R,
\end{equation}
where $F \in C^1(\mathbb R)$, $A \in C^1(\overline B_R)$, and $ds$ denote the surface element on the boundary. By this identity and Lemma {\ref{lem210}}, we obtain the following fact.
\begin{Lem} \label{lem212}
It holds that
\begin{equation} \label{eq257}
\int_{I_+} \tilde{\psi}(\beta) \mathcal P(d\beta)=\frac{\overline\lambda}{8\pi}\Big(\int_{I_+} \beta \tilde{\psi}(\beta) \mathcal P(d\beta)\Big)^2.
\end{equation}
\end{Lem}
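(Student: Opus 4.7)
The approach is to apply the Pohozaev identity (\ref{eq253}) to the limit profile $\tilde{w}$ on $B_R$, let $R \to +\infty$ using Lemma \ref{lem210} to evaluate the boundary terms, and then match the resulting volume integral with $\int_{I_+} \tilde{\psi}(\beta)\,\mathcal{P}(d\beta)$ via Fubini and the characterization of $\tilde{\psi}$ afforded by Lemma \ref{lem206} and Proposition \ref{pro203}.

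Passing to a subsequence, for each $\beta \in \mathcal{B}$ set $c_\beta := \lim_k (w_{k,\beta}(0) - w_k(0)) \in (-\infty, 0]$. From (\ref{eq216}) and $\tilde{w}_k \to \tilde{w}$ in $C^2_{loc}(\mathbb{R}^2)$ we have $\tilde{w}_{k,\beta} \to \tilde{w}_\beta := \beta \tilde{w} + c_\beta$ locally uniformly, while for $\beta \notin \mathcal{B}$ the same relation forces $\tilde{w}_{k,\beta}(x) \to -\infty$. Hence dominated convergence gives $\tilde{f}(x) = \overline{\lambda}\, F'(\tilde{w}(x))$ on $\mathbb{R}^2$ with $F(u) := \int_{\mathcal{B}} e^{\beta u + c_\beta}\,\mathcal{P}(d\beta)$, so $-\Delta \tilde{w} = \overline{\lambda}\, F'(\tilde{w})$ in $\mathbb{R}^2$ with constant coefficient $A \equiv \overline{\lambda}$. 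Applying (\ref{eq253}) yields, for every $R > 0$,
\[
R \int_{\partial B_R}\Bigl(\tfrac{1}{2}|\nabla \tilde{w}|^2 - \tilde{w}_r^2\Bigr)\,ds = R\,\overline{\lambda} \int_{\partial B_R} F(\tilde{w})\,ds - 2\overline{\lambda} \int_{B_R} F(\tilde{w})\,dx.
\]

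Now I let $R \to +\infty$. By Lemma \ref{lem210}, $r\tilde{w}_r \to -\beta_0/(2\pi)$ and $\tilde{w}_\theta \to 0$ uniformly in $\theta$, so a direct calculation shows that the left-hand side tends to $-\beta_0^2/(4\pi)$. For the boundary term on the right, combining (\ref{eq214}) with Lemma \ref{lem208} gives $\epsilon_0 > 0$ such that $F(\tilde{w}(x)) \le C\, e^{\beta_{inf}\tilde{w}(x)} = O(|x|^{-2-\epsilon_0})$ as $|x| \to +\infty$, since $\tilde{w} \le 0$ and $\beta \ge \beta_{inf}$. Consequently $R \int_{\partial B_R} F(\tilde{w})\,ds = O(R^{-\epsilon_0}) \to 0$ and $\int_{B_R} F(\tilde{w})\,dx \to \int_{\mathbb{R}^2} F(\tilde{w})\,dx < +\infty$, so
\[
\int_{\mathbb{R}^2} F(\tilde{w})\,dx = \frac{\beta_0^2}{8\pi\,\overline{\lambda}}.
\]

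It remains to match this integral with $\int_{I_+} \tilde{\psi}(\beta)\,\mathcal{P}(d\beta)$. For each $j$ and $\beta \in \mathcal{B}$, applying bounded convergence ($e^{\tilde{w}_{k,\beta}} \le 1$) to the weak-$*$ limit of Lemma \ref{lem206} with $\omega_j = B_{R_j}$ identifies $\tilde{\psi}^{\omega_j}(\beta) = \int_{\omega_j} e^{\tilde{w}_\beta(x)}\,dx$; for $\beta \notin \mathcal{B}$ one likewise obtains $\tilde{\psi}^{\omega_j}(\beta) = 0$. Letting $j \to +\infty$ and invoking the monotone convergence $\tilde{\psi}^{\omega_j}(\beta) \uparrow \tilde{\psi}(\beta)$ from the proof of Proposition \ref{pro203}, I conclude $\tilde{\psi}(\beta) = \int_{\mathbb{R}^2} e^{\tilde{w}_\beta(x)}\,dx$ for $\beta \in \mathcal{B}$ and $\tilde{\psi}(\beta) = 0$ otherwise. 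Fubini then gives $\int_{\mathbb{R}^2} F(\tilde{w})\,dx = \int_{I_+} \tilde{\psi}(\beta)\,\mathcal{P}(d\beta)$, and substituting the identity $\beta_0 = \overline{\lambda}\int_{I_+} \beta\,\tilde{\psi}(\beta)\,\mathcal{P}(d\beta)$ from (\ref{eq240'}) produces (\ref{eq257}). The main obstacle is extracting a uniform decay exponent for the surface term in Pohozaev: Lemma \ref{lem208} yields $\beta > 4\pi/\beta_0$ for each individual $\beta \in \mathcal{B}$, but a single $\epsilon_0 > 0$ must be argued from $\beta \ge \beta_{inf} \ge 4\pi/\beta_0$ with due attention to whether $\beta_{inf}$ belongs to $\mathcal{B}$ or not.
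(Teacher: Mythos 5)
Your proposal uses the same core tools as the paper---the Pohozaev identity together with Lemma \ref{lem210} for the radial rate and (\ref{eq240'}) to close the algebra---but takes the limits in the opposite order. The paper applies (\ref{eq253}) at finite $k$ to $\tilde{w}_k$ on $B_R$, obtaining (\ref{eq259}), then lets $k\to\infty$ followed by $R\to\infty$; in that route the volume term is identified directly with $-2\overline\lambda\int_{I_+}\tilde{\psi}^{B_R}(\beta)\mathcal P(d\beta)$ via Lemma \ref{lem206}, and then with $-2\overline\lambda\int_{I_+}\tilde{\psi}(\beta)\mathcal P(d\beta)$ by monotone convergence, so no explicit formula for the limiting nonlinearity is ever needed. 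You instead pass to the limit first, writing $-\Delta\tilde{w}=\overline\lambda F'(\tilde{w})$ with $F(u)=\int_{\mathcal B}e^{\beta u+c_\beta}\mathcal P(d\beta)$, and apply Pohozaev directly to $\tilde{w}$. That is cleaner in spirit, but you owe two technical points before the Fubini step matching $\int_{\mathbb R^2}F(\tilde{w})$ with $\int_{I_+}\tilde{\psi}\,d\mathcal P$ is legitimate: that the constants $c_\beta$ exist simultaneously for $\mathcal P$-a.e.\ $\beta$ along a single subsequence (a Helly selection applied to the nondecreasing functions $\beta\mapsto w_{k,\beta}(0)-w_k(0)$ supplies this), and that $\beta\mapsto c_\beta$ is measurable (again by monotonicity). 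On the issue you single out at the end---extracting one $\epsilon_0>0$ so that $R\int_{\partial B_R}F(\tilde{w})\,ds\to 0$---your caution is warranted, and the paper is no more explicit: it simply cites Lemma \ref{lem208} and (\ref{eq226}) for the first term of (\ref{eq261}), which give $\beta>4\pi/\beta_0$ for each individual $\beta\in I_{inf}$ but only $\beta_{inf}\ge 4\pi/\beta_0$ for the infimum. When $\beta_{inf}\in\mathcal B$ or $\beta_{inf}\notin{\rm supp}\,\mathcal P$ the strict inequality $\beta_{inf}>4\pi/\beta_0$ follows and your bound $F(\tilde{w})\le e^{\beta_{inf}\tilde{w}}$ yields uniform decay; the remaining case $\beta_{inf}\in{\rm supp}\,\mathcal P\setminus\mathcal B$ requires an extra word that neither you nor the paper spell out. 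Modulo that shared caveat, your argument is a correct variant of the paper's proof.
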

\begin{proof}
We apply (\ref{eq253}) for (\ref{eq255}) to (\ref{eq205}) where $u=\tilde{w}_k$ and
\begin{equation*}
F(\tilde{w}_k)=\lambda_k \int_{I_+} e^{\tilde{w}_{k, \beta}}\mathcal P(d\beta), \quad A(x) \equiv 1.
\end{equation*}
It follows that 
\begin{align} \label{eq259}
R\int_{\partial B_R}\frac{1}{2}|\nabla \tilde{w}_k|^2-(\tilde{w}_k)_r^2 ds&=-2\lambda_k \int_{I_+} \Big(\int_{B_R} e^{\tilde{w}_{k, \beta}} dx \Big)\mathcal P(d \beta) \nonumber \\
&+R\lambda_k \int_{I_+} \Big(\int_{\partial B_R} e^{\tilde{w}_{k, \beta}} \Big)\mathcal P(d \beta){.}
\end{align}
By Lemma \ref{lem210}, we have
\begin{equation*}
[L. H. S \hspace{1mm} of \hspace{1mm} (\ref{eq259})] \to -\pi \bigg(\frac{\beta_0}{2\pi}\bigg)^2 \quad as\hspace{2mm}  k \to \infty\hspace{2mm} and \hspace{2mm}R \to \infty.
\end{equation*}
The second term of right hand side of (\ref{eq259}) tends to 0 as  $k \to \infty$ and $R \to \infty$. Indeed, we have 
\begin{align} \label{eq261}
\int_{I_+} \Big(\int_{\partial B_R} e^{\tilde{w}_{k, \beta}} \Big)\mathcal P(d \beta)&=\int_{I_{inf}} \Big(\int_{\partial B_R} e^{\tilde{w}_{k, \beta}} \Big)\mathcal P(d \beta) \nonumber \\
&+\int_{I_+ \setminus I_{inf}}\Big(\int_{\partial B_R} e^{\tilde{w}_{k, \beta}} \Big)\mathcal P(d \beta).
\end{align}
Thanks to Lemma \ref{lem208} and (\ref{eq226}), the first term of the right hand side of (\ref{eq261}) tends to 0. And the second term also so because of the definition of $I_{inf}$.
Therefore, we have
\begin{equation*}
-\pi\Big(\frac{\beta_0}{2\pi}\Big)^2=-2\overline\lambda \int_{I_+} \tilde{\psi}(\beta) \mathcal P(d\beta),
\end{equation*}
and imply that (\ref{eq257}) holds. 

\end{proof}

\section{Proof of Theorem \ref{th102}}
By our assumption, residual vanishing occurs and we have
\begin{equation} \label{eq263}
\int_{\mathbb R^2} \tilde{f} dx=m(x_0)=\overline \lambda \int_{I_+} \alpha \mathcal P(d\alpha).
\end{equation}
Moreover from (\ref{eq263}), (\ref{eq240'}) and $0 \leq \tilde{\psi}(\beta) \leq 1$ $\mathcal P{\mathchar `-}$a.e. on $I_+$, it follows that
\begin{equation} \label{eq265}
\tilde{\psi}(\beta)=1 \quad \mathcal P{\mathchar `-} a.e. \hspace{1mm} on \hspace{1mm} I_+.
\end{equation}

\begin{Prop} \label{pro205}
Under the assumptions of Theorem \ref{th102}, it follows that
\begin{equation}\label{eq243}
\alpha_{min}=\beta_{inf}\in \mathcal B, \quad \alpha_{min}>\frac{4\pi}{\beta_0}.
\end{equation}
\end{Prop}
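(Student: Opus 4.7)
The plan is to show $\mathcal P(I_+\setminus\mathcal B)=0$, which combined with $\mathcal P(\{\alpha_{min}\})>0$ will force $\alpha_{min}\in\mathcal B$; then $\beta_{inf}=\alpha_{min}$ follows from obvious two-sided inequalities and $\alpha_{min}>4\pi/\beta_0$ is immediate from Lemma \ref{lem208}. The essential input from the hypotheses of Theorem \ref{th102} is the identity $\tilde\psi(\beta)=1$ $\mathcal P$-a.e.\ on $I_+$ from (\ref{eq265}); against this, I want to show $\tilde\psi(\beta)=0$ $\mathcal P$-a.e.\ on $I_+\setminus\mathcal B$.

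First, for $\beta\in I_+\setminus\mathcal B$ the definition of $\mathcal B$ in (\ref{eq230}) gives $w_{k,\beta}(0)-w_k(0)\to-\infty$ along the full sequence, and combined with (\ref{eq216}) and $\tilde w_k\leq 0$ from (\ref{eq218}) this yields
\[
\tilde w_{k,\beta}(x)=\beta\tilde w_k(x)+(w_{k,\beta}(0)-w_k(0))\leq w_{k,\beta}(0)-w_k(0)\to-\infty
\]
uniformly in $x\in\mathbb R^2$. Hence for each $R>0$ the function $h_k^R(\beta):=\int_{B_R}e^{\tilde w_{k,\beta}}\,dx$ converges pointwise to $0$ on $I_+\setminus\mathcal B$, while remaining uniformly bounded by $1$.

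Second, the weak-$*$ convergence $h_k^R\mathcal P\overset{*}{\rightharpoonup}\tilde\psi^{B_R}\mathcal P$ from Lemma \ref{lem206}, together with the uniform bound $h_k^R,\tilde\psi^{B_R}\leq 1$ and the $L^1(\mathcal P)$-density of $C(I_+)$, upgrades to $\int_A h_k^R\,\mathcal P(d\beta)\to\int_A\tilde\psi^{B_R}\,\mathcal P(d\beta)$ for every Borel $A\subset I_+$: any indicator $\chi_A$ can be approximated in $L^1(\mathcal P)$ by continuous functions, and the uniform $L^\infty$ bound controls the error against both $h_k^R$ and $\tilde\psi^{B_R}$. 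Choosing $A=I_+\setminus\mathcal B$ and applying dominated convergence on the left gives $\int_{I_+\setminus\mathcal B}\tilde\psi^{B_R}\,\mathcal P=0$, so $\tilde\psi^{B_R}=0$ $\mathcal P$-a.e.\ on $I_+\setminus\mathcal B$. Letting $R\uparrow\infty$ with the monotonicity of $\tilde\psi^{B_R}$ used in Proposition \ref{pro203} yields $\tilde\psi=0$ $\mathcal P$-a.e.\ on $I_+\setminus\mathcal B$.

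Combined with (\ref{eq265}), this forces $\mathcal P(I_+\setminus\mathcal B)=0$, so the atom $\{\alpha_{min}\}$ with $\mathcal P(\{\alpha_{min}\})>0$ cannot lie in $I_+\setminus\mathcal B$; thus $\alpha_{min}\in\mathcal B$. Since $\mathcal B\subset{\rm supp}\,\mathcal P$ gives $\beta_{inf}\geq\alpha_{min}$, and $\alpha_{min}\in\mathcal B$ gives $\beta_{inf}\leq\alpha_{min}$, we conclude $\alpha_{min}=\beta_{inf}\in\mathcal B$, and Lemma \ref{lem208} applied to $\beta=\beta_{inf}\in I_{inf}$ delivers $\alpha_{min}>4\pi/\beta_0$. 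The only real obstacle is the second step: transferring Lemma \ref{lem206}'s weak-$*$ convergence against continuous test functions to integrals against rough indicators, which is precisely what the uniform $L^\infty(\mathcal P)$-bound on $h_k^R$ is there to handle.
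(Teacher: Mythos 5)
Your proof is correct, and it takes a genuinely different route from the paper's. The paper establishes (\ref{eq243}) in two separate contradiction arguments: first, assuming $\beta_{inf}>\alpha_{min}$, it asserts (citing only ``the definition of $\beta_{inf}$ and $\tilde\psi$'') that supp\,$\tilde\psi\subset[\beta_{inf},1]$, so $\tilde\psi$ would vanish $\mathcal P$-a.e.\ on an interval of positive $\mathcal P$-measure just above $\alpha_{min}$, contradicting (\ref{eq265}); second, assuming $\alpha_{min}\notin\mathcal B$, it passes to the pointwise limit of $\tilde f_k$, notes that the $\alpha_{min}$-atom then contributes nothing to $\tilde f$, and integrates over $\mathbb R^2$ to contradict the mass identity (\ref{eq263}). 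You instead isolate a single, stronger fact---$\mathcal P(I_+\setminus\mathcal B)=0$---by upgrading the weak-$*$ convergence of Lemma \ref{lem206} from continuous test functions to Borel indicators (legitimate because the Radon--Nikod\'{y}m densities $h_k^R$ and $\tilde\psi^{B_R}$ are uniformly bounded by $1$, so the $L^1(\mathcal P)$-density of $C(I_+)$ controls the error), then applying dominated convergence and the monotone limit $\tilde\psi^{\omega_j}\uparrow\tilde\psi$. From $\mathcal P(I_+\setminus\mathcal B)=0$ and $\mathcal P(\{\alpha_{min}\})>0$ both conclusions drop out with no further analysis, so the paper's second contradiction (the decomposition of $\tilde f_k$ and integration step) is bypassed entirely; your argument also supplies the precise justification the paper elides for ``supp\,$\tilde\psi\subset[\beta_{inf},1]$''. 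Two small cosmetic points: the uniform decay $\tilde w_{k,\beta}\to-\infty$ only makes sense on $B_{R_0\sigma_k^{-1}}$ rather than all of $\mathbb R^2$ (irrelevant since $B_R\subset B_{R_0\sigma_k^{-1}}$ for large $k$), and the pointwise convergence $h_k^R(\beta)\to 0$ is only asserted for $\beta\in{\rm supp}\,\mathcal P\setminus\mathcal B$ rather than all of $I_+\setminus\mathcal B$, but this is harmless since $\mathcal P$ does not see the complement of its support.
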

\begin{proof}
First of all, we have
\begin{equation}\label{eq245}
\alpha_{min}=\beta_{inf}.
\end{equation}
Indeed, $\beta_{inf} \geq \alpha_{\rm \min}$ is obvious, we assume the contrary, $\beta_{inf} > \alpha_{\rm \min}$. Then it holds that supp $\tilde{\psi} \subset$ $[\beta_{inf}, 1]$ by the definition of $\beta_{inf}$ and $\tilde{\psi}$, thus we obtain $\mathcal P([\alpha_{\rm min}, (\beta_{inf}+\alpha_{\rm min})/2])>0$ and $\tilde{\psi}=0$ $\mathcal P$-a.e. on $[\alpha_{\rm min}, (\beta_{inf}+\alpha_{\rm min})/2]$. However, this is impossible by (\ref{eq265}).\

Next, it holds that
\begin{equation} \label{eq247}
\alpha_{min} \in \mathcal B.
\end{equation}
If not, $\alpha_{min} \not\in \mathcal B$, by our assumption of $\mathcal P(\{\alpha_{min}\})>0$,
\begin{equation*}
\tilde{f}_k(x)=\lambda_k \Big( \int_{I_+ \setminus \{\alpha_{min}\}} \alpha e^{\tilde{w}_{k, \alpha}(x)} \mathcal P(d\alpha) +\alpha_{min} e^{\tilde{w}_{k, \alpha_{min}}(x)} \mathcal P(\{\alpha_{min}\}) \Big),
\end{equation*}
and then passing to a subsequence, by Proposition \ref{pro201} and the definition of $\mathcal B$,
\begin{equation} \label{eq249}
\tilde{f}(x)= \overline\lambda \int_{I_+ \setminus \{\alpha_{min}\}} \alpha e^{\tilde{w}_{\alpha}(x)} \mathcal P(d\alpha)
\end{equation}
From (\ref{eq249}) and the integral condition $\int_{\mathbb R^2} e^{\tilde{w_{\alpha}}} \leq 1$, we have
\begin{align*}
\int_{\mathbb R^2} \tilde{f}(x) dx&=\overline\lambda \int_{I_+ \setminus \{\alpha_{min}\}} \alpha \int_{\mathbb R^2} e^{\tilde{w}_{\alpha}(x)} dx \mathcal P(d\alpha)\\
& \leq \overline\lambda \int_{I_+ \setminus \{\alpha_{min}\}} \alpha \mathcal P(d\alpha) < \overline\lambda \int_{I_+} \alpha \mathcal P(d\alpha),
\end{align*}
which is a contradiction to (\ref{eq263}). Therefore (\ref{eq247}) holds.\

Finally, (\ref{eq247}) and Lemma \ref{lem208} imply that $\alpha_{min}>4\pi /\beta_0$ holds. Now we complete the proof of Proposition \ref{pro205}.
\end{proof}
Let $G=G(x, y)$ be the Green function:
\begin{equation*}
-\Delta_x G(x, y)=\delta_y \quad in \hspace{2mm} \Omega, \quad G(\cdot, y)=0 \quad on \hspace{2mm} \partial \Omega.
\end{equation*}

\begin{Prop} \label{prop300}
Under the assumption of Theorem \ref{th102}, it follows that
\begin{equation*}
\lim_{k \to \infty} \int_{\Omega} e^{\alpha_{min} v_k} dx=+\infty.
\end{equation*}
\end{Prop}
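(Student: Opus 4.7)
The plan is to prove the claim by combining two ingredients already in hand: on the one hand, the residual vanishing $s(x)\equiv 0$ together with Proposition \ref{prop100} and Remark \ref{rem105} yields $\#S=1$, $S=\{x_0\}$, and $-\Delta v_k \overset{*}{\rightharpoonup} m(x_0)\delta_{x_0}$ in $\mathcal M(\overline\Omega)$; on the other hand, the mass identity (\ref{eq263}) gives $\beta_0 = m(x_0)$, and Proposition \ref{pro205} provides the critical inequality $\alpha_{\min} > 4\pi/\beta_0$. I will then identify the pointwise limit of $v_k$ and show that the limiting profile is not $L^1$ against the weight $e^{\alpha_{\min}\,\cdot}$ in any neighborhood of $x_0$, forcing $\int_\Omega e^{\alpha_{\min} v_k}$ to diverge.

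First I would establish that $v_k \to v_\infty := m(x_0) G(\cdot, x_0)$ uniformly on compact subsets of $\overline\Omega \setminus \{x_0\}$. From the Green's representation $v_k(x)=\int_\Omega G(x,y)(-\Delta v_k)(y)\,dy$, the weak-$*$ convergence of $-\Delta v_k$ paired with continuity of $G(x,\cdot)$ for $x\neq x_0$ gives pointwise convergence; the uniform-on-compacts upgrade comes from the local $L^\infty$ bound on $-\Delta v_k$ outside any fixed ball around $x_0$ (which is where residual vanishing enters, since $s\equiv 0$ in $L^\infty_{\rm loc}(\Omega\setminus\{x_0\})$), the boundary $L^\infty$ bound of Lemma \ref{lem100}, and standard elliptic regularity.

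Next, for fixed $\epsilon>0$, uniform convergence on $\overline\Omega\setminus B_\epsilon(x_0)$ implies
\[
\int_{\Omega\setminus B_\epsilon(x_0)} e^{\alpha_{\min} v_k}\,dx \;\longrightarrow\; \int_{\Omega\setminus B_\epsilon(x_0)} e^{\alpha_{\min} v_\infty}\,dx \qquad (k\to\infty).
\]
Using the decomposition $G(x,x_0)=-\frac{1}{2\pi}\log|x-x_0|+R(x,x_0)$ with $R$ smooth, the integrand on the right behaves like $|x-x_0|^{-\alpha_{\min} m(x_0)/(2\pi)}$ near $x_0$. By Proposition \ref{pro205}, $\alpha_{\min} m(x_0)/(2\pi)>2$, so $e^{\alpha_{\min} v_\infty}$ fails to be integrable in any neighborhood of $x_0$, and therefore $\int_{\Omega\setminus B_\epsilon(x_0)} e^{\alpha_{\min} v_\infty}\,dx\to +\infty$ as $\epsilon\to 0$.

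Given $M>0$, I would then choose $\epsilon>0$ so that $\int_{\Omega\setminus B_\epsilon(x_0)} e^{\alpha_{\min} v_\infty}\,dx>2M$, and $k_0$ so that the displayed limit gives $\int_{\Omega\setminus B_\epsilon(x_0)} e^{\alpha_{\min} v_k}\geq M$ for $k\geq k_0$, whence $\int_\Omega e^{\alpha_{\min} v_k}\geq M$. Since $M$ is arbitrary the claim follows. The main technical step to verify is the uniform convergence $v_k\to v_\infty$ up to $\partial\Omega$ outside a neighborhood of $x_0$; once that is in place the proof amounts to a direct exploitation of the exponent inequality supplied by Proposition \ref{pro205}, which is precisely the arithmetic content of ``$\alpha_{\min}>4\pi/\beta_0$ forces non-integrability of the blow-up profile.''
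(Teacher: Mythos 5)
Your proposal reaches the same conclusion as the paper, but by a genuinely different route, and it is worth flagging where the two diverge and where yours carries a bit more technical debt. The paper's proof is one-sided: since $v_k\geq 0=:C_0$ on $\partial\omega$ (with $\omega=B_{R_0}(x_0)$), it introduces $z_k$ solving $-\Delta z_k=\mu_k$ in $\omega$, $z_k=C_0$ on $\partial\omega$, gets $v_k\geq z_k$ from the maximum principle, extracts a \emph{lower} bound $z(x)\geq \frac{m(x_0)}{2\pi}\log\frac{1}{|x-x_0|}-C$ on the local limit $z$, multiplies by $\alpha_{\min}$, invokes Proposition~\ref{pro205} together with (\ref{eq120}) to get an exponent $>2$, and applies Fatou. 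Only a $\liminf$ is needed, so no identification of the full limit of $v_k$ is required. You instead assert the two-sided uniform convergence $v_k\to m(x_0)G(\cdot,x_0)$ on compacta of $\overline\Omega\setminus\{x_0\}$ and then pass $\epsilon\to 0$ after $k\to\infty$. This works, but it is strictly more than what the proposition needs, and it has two points to be careful about. First, $G(x,\cdot)$ is \emph{not} continuous on $\overline\Omega$ for $x\in\Omega$ (it is singular at $y=x$), so ``weak-$*$ convergence of $\mu_k$ paired with continuity of $G(x,\cdot)$'' does not by itself give the convergence; you need a uniform local $L^\infty$ (or at least $L^p$, $p>1$) bound on $\mu_k$ away from $x_0$ to treat the near-diagonal contribution, and Proposition~\ref{prop100} as stated only asserts $s\in L^\infty_{\rm loc}(\Omega\setminus S)$ for the \emph{limit}, not a bound on the sequence $\mu_k$ itself (that bound is standard Brezis--Merle material, but it is not quoted in the text, so it should be). Second, the convergence $v_k\to m(x_0)G(\cdot,x_0)$ away from $x_0$ is exactly the content of Lemma~\ref{lem300}, whose proof in the paper \emph{uses} Proposition~\ref{prop300}; you avoid circularity only because you derive the convergence independently via the Green representation, so you should make that independence explicit rather than risk the reader reading it as an appeal to Lemma~\ref{lem300}. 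With those caveats addressed, your diagonal argument ($\epsilon$ then $k$) and the identification $\alpha_{\min}m(x_0)/(2\pi)>2$ from Proposition~\ref{pro205} and (\ref{eq120}) are correct and give the claim. In short: same key inequality and same non-integrability mechanism, but the paper's sub-solution-plus-Fatou route is the leaner one because it never needs the upper bound on $v_k$ that your uniform convergence carries along.
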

\begin{proof}
By an argument of \cite{key00}, we shall establish the desired result. \

Indeed, setting $\omega=B_{R_0}(x_0)$ and note that $v_k \geq C_0$ on $\partial \omega$ and $z_k$ be a solution of
\begin{equation*}
-\Delta z_k=\mu_k \quad in \hspace{2mm} \omega, \quad z_k=C_0 \quad on \hspace{2mm} \partial \omega
\end{equation*}
where $\mu_k$ in (\ref{eq114}). By the maximum principle, we have 
\begin{equation} \label{eq300'}
v_k \geq z_k \quad in \hspace{2mm} \omega.
\end{equation}
On the other hand $z_k \to z$ locally uniformly in $\omega \setminus \{x_0 \}$ and 
\begin{equation*}
\mu_k(dy) \overset{*}{\rightharpoonup} s'(y)dy+m (x_0)\delta_{x_0}(dy) \quad in \hspace{2mm} \mathcal M(\overline \omega),
\end{equation*}
where $s' \in L^1(B_{R_0})$ is nonnegative. Therefore,
\begin{equation*}
-\Delta z=\mu \quad in \hspace{2mm} \omega, \quad z=C_0 \quad on \hspace{2mm} \partial\omega,
\end{equation*}
and
\begin{equation} \label{eq300}
z(x) \geq \frac{m(x_0)}{2\pi} \log\frac{1}{|x-x_0|}-C, \quad x \in \overline \omega \setminus \{x_0\}.
\end{equation}
By Proposition \ref{pro205}, we have
\begin{equation} \label{eq301}
\alpha_{min}=\beta_{inf} > \frac{4\pi}{\beta_0}.
\end{equation}
By (\ref{eq300'})-(\ref{eq301}) and (\ref{eq120}) we obtain 
\begin{equation} \label{eq302}
\alpha_{min} \big(\liminf_{k\to\infty} v_k(x) \big)\geq \log\frac{1}{|x-x_0|^2}-C', \quad x \in \overline \omega \setminus \{x_0\}.
\end{equation}
Therefore from Fatou's lemma and the definition of $\alpha_{min}$, 
\begin{equation} \label{eq303}
\liminf_{k \to \infty}\int_{\Omega} e^{\alpha_{min} v_k} \geq C'' \int_{\omega} \frac{1}{|x-x_0|^2} dx=+\infty.
\end{equation}
\end{proof}

\begin{Lem} \label{lem300}
It holds that
\begin{equation} \label{eq304}
w_k(x)+\log\int_{\Omega}e^{v_k} \to \overline\lambda \int_{I_+} \alpha \mathcal P(d\alpha) G(\cdot+x_0, x_0) \quad in \hspace{2mm} C_{loc}^2(B_{3R_0} \setminus \{0\}),
\end{equation}
as $k \to \infty$. For every $\omega \subset \subset B_{3R_0} \setminus \{0\}$, there exists $C_{1, \omega}>0$ such that
\begin{equation} \label{eq306}
\osc_{\omega} w_{k} \equiv \sup_{\omega} w_k -\inf_{\omega} w_k \leq C_{1, \omega}
\end{equation}
for large $k \in \mathbb N$.
\end{Lem}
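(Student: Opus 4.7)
The plan is to exploit the trivial identity
\[
w_k(x) + \log\int_{\Omega} e^{v_k} \;=\; v_k(x+x_k),
\]
which is immediate from the definition $w_k=w_{k,1}$, and then to pass to the limit in $v_k(\cdot+x_k)$ via the Green's representation. Since $4R_0=\mathrm{dist}(x_0,\partial\Omega)$ and $x_k\to x_0$, the translated ball $\{x+x_k : x\in B_{3R_0}\}$ is, for large $k$, compactly contained in $\Omega$. Thus it suffices to establish
\[
v_k(y) \;\to\; \overline\lambda\int_{I_+}\alpha\,\mathcal P(d\alpha)\; G(y,x_0)
\quad\text{in } C^2_{\mathrm{loc}}\bigl(B_{3R_0}(x_0)\setminus\{x_0\}\bigr),
\]
after which translation by $x_k$ and smoothness of $G(\cdot,x_0)$ away from $x_0$ give (\ref{eq304}).

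The core tool is the Green representation $v_k(y)=\int_{\Omega} G(y,z)\,\mu_k(dz)$, with $\mu_k$ from (\ref{eq114}). Under the residual vanishing hypothesis $s\equiv 0$, Remark~\ref{rem105} (or direct consequence of Proposition~\ref{prop100}) gives $\#S=1$ together with the weak-$*$ convergence $\mu_k\overset{*}{\rightharpoonup} m(x_0)\,\delta_{x_0}$ in $\mathcal M(\overline\Omega)$, and the identity (\ref{eq263}) identifies $m(x_0)=\overline\lambda\int_{I_+}\alpha\,\mathcal P(d\alpha)$. The Brezis--Merle dichotomy underlying Proposition~\ref{prop100} gives uniform $L^\infty$ bounds on $v_k$ on any compact $K\subset\Omega\setminus\{x_0\}$; combined with the fact that $\int_{\Omega} e^{\alpha v_k}\to+\infty$ (since $v_k(x_k)\to+\infty$), this yields locally uniform convergence $\mu_k(y)\to 0$ on such $K$, hence in particular uniform $L^\infty_{\mathrm{loc}}$ control of the density away from $x_0$. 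For $y$ in a fixed $\omega\subset\subset B_{3R_0}(x_0)\setminus\{x_0\}$ the kernel $G(y,\cdot)$ is continuous on a neighborhood of $x_0$, so the weak-$*$ convergence of $\mu_k$ produces the pointwise limit $v_k(y)\to m(x_0) G(y,x_0)$. Standard interior Schauder estimates applied to $-\Delta v_k=\mu_k$ on a slightly larger neighborhood of $\omega$, using the $L^\infty$ bound on $\mu_k$ there, upgrade this to $C^2$ convergence and complete the proof of (\ref{eq304}).

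The oscillation bound (\ref{eq306}) is then automatic: because $\log\int_\Omega e^{v_k}$ is independent of $x$, $\osc_\omega w_k = \osc_\omega\bigl(w_k+\log\int_\Omega e^{v_k}\bigr)$, and the right-hand side converges in $C^0(\omega)$ to the bounded continuous function $\overline\lambda\int_{I_+}\alpha\,\mathcal P(d\alpha)\,G(\cdot+x_0,x_0)$, whose oscillation on $\omega$ is finite.

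The only delicate step is the passage from the weak-$*$ convergence of $\mu_k$ to locally uniform $L^\infty$ control of the density $\mu_k$ away from $x_0$; this is precisely where residual vanishing enters essentially, ensuring that no diffuse mass survives in the limit and that the normalizations $\int_\Omega e^{\alpha v_k}$ dominate $e^{\alpha v_k(y)}$ uniformly on compact subsets of $\Omega\setminus\{x_0\}$. Everything else is routine Green's function bookkeeping and interior elliptic regularity.
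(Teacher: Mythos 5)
The overall route you take — rewrite $w_k(x)+\log\int_\Omega e^{v_k}$ via the Green representation, identify $m(x_0)=\overline\lambda\int_{I_+}\alpha\,\mathcal P(d\alpha)$ from residual vanishing and (\ref{eq263}), and argue that the density of $\mu_k$ vanishes locally uniformly away from $x_0$ — is the same as the paper's. The gap is in the one step that carries all the weight: you assert that ``$\int_\Omega e^{\alpha v_k}\to+\infty$ (since $v_k(x_k)\to+\infty$)''. That parenthetical is not a valid deduction. The fact that $v_k$ blows up at a single interior point says nothing about $\int_\Omega e^{\alpha v_k}$: a peak of arbitrarily small support is compatible with $\int_\Omega e^{\alpha v_k}$ staying bounded, and indeed for small $\alpha>0$ there is no a priori reason for this integral to diverge. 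Yet without it, the density $\lambda_k\int_{I_+}\frac{\alpha e^{\alpha v_k}}{\int_\Omega e^{\alpha v_k}}\mathcal P(d\alpha)$ on compacts of $\Omega\setminus\{x_0\}$ stays of order one, not $o(1)$, and the second piece of the Green integral (the part carrying the singularity of $G(y,\cdot)$ at $z=y$, which weak-$*$ convergence of $\mu_k$ against continuous test functions does not control) does not go to zero. So your ``locally uniform convergence $\mu_k\to 0$'' is unproved.

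This is exactly why the paper proves Proposition \ref{prop300} immediately before Lemma \ref{lem300}: one first shows $\alpha_{\min}>4\pi/\beta_0$ (Proposition \ref{pro205}, which uses the mass identity (\ref{eq120}) and the hypothesis (\ref{eq121'}) on $\mathcal P$), then uses the comparison $v_k\ge z_k$ together with the Green lower bound $z(x)\ge\frac{m(x_0)}{2\pi}\log\frac{1}{|x-x_0|}-C$ to conclude $\alpha_{\min}\liminf_k v_k(x)\ge\log|x-x_0|^{-2}-C'$, whence by Fatou $\int_\Omega e^{\alpha_{\min}v_k}\to+\infty$ (and hence the same for all $\alpha\ge\alpha_{\min}$ by Jensen). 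This is precisely where the hypotheses (\ref{eq120}) and (\ref{eq121'}) of Theorem \ref{th102} enter the lemma; your argument uses neither, which is a sign the justification is too weak. If you instead invoke Proposition \ref{prop300} at that point, the remainder of your write-up (the split of the Green kernel into the region near $x_0$, where weak-$*$ convergence applies, and the region carrying the $z=y$ singularity, where the $L^\infty$ decay of the density is used, followed by interior elliptic estimates for the $C^2_{\mathrm{loc}}$ upgrade and the trivial deduction of (\ref{eq306})) does go through and matches the paper's argument.
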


\begin{proof}
Without loss of generality, we may assume that $B_{3R_0} \subset \Omega_k \equiv \Omega-\{x_k\}$ for large $k \in \mathbb N$. By the definition of $w_k$, we have
\begin{align*}
w_k(x)+\log\int_{\Omega}e^{v_k}&=\int_{\Omega} G(x+x_k, y)\mu_k(dy),
\end{align*}
for $x \in B_{3R_0}$ where $\mu_k(dy)$ as in Proposition \ref{prop100}. Note that
\begin{align} \label{eq306'}
w_k(x)+\log\int_{\Omega}e^{v_k}&=G(x+x_k, x_0) \int_{\Omega} \mu_k(dy) \nonumber \\
&+\int_{\Omega} [G(x+x_k, y) -G(x+x_k, x_0)]\mu_k(dy). 
\end{align}
From the fact that $[G(x+x_k, y) -G(x+x_k, x_0)] \to 0$ as $y \to x_0$ locally uniformly for $x \in B_{3R_0} \setminus \{0\}$ and (\ref{eq303}), the second term of right hand side in the above relation tends to 0 as $k \to \infty$. Indeed, we set for any $r \in (0, R_0)$,
\begin{align*}
&\int_{\Omega} [G(x+x_k, y) -G(x+x_k, x_0)]\mu_k(dy)\\
&=\int_{B_r(x_0)} [G(x+x_k, y) -G(x+x_k, x_0)]\mu_k(dy)\\
&+\int_{\Omega \setminus B_r(x_0)} [G(x+x_k, y) -G(x+x_k, x_0)]\mu_k(dy)\\
&=: I_1^{k, r}(x)+I_2^{k, r}(x).
\end{align*}
By the direct calculation and (\ref{eq303}), we have for $r>0$,
\begin{align} \label{eq307}
|I_2^{k, r}(x)|& \leq \int_{\Omega \setminus B_r(x_0)} |G(x+x_k, y) -G(x+x_k, x_0)| \lambda_k \int_{I_+} \frac{\alpha e^{\alpha v_k}}{\int_{\Omega} e^{\alpha v_k} dx} \mathcal P(d\alpha) dy \nonumber \\
&\leq \frac{\lambda_k}{\int_{\Omega} e^{\alpha_{min} v_k} dx} \int_{\Omega \setminus B_r(x_0)} |G(x+x_k, y) -G(x+x_k, x_0)| e^{v_k}dy \nonumber \\
& \to 0
\end{align}
as $k \to \infty$ locally uniformly in $x \in B_{3R_0} \setminus \{0\}$. On the other hands, for large $k \in \mathbb N$,
\begin{align} \label{eq307'}
|I_1^{k, r}(x)|& \leq \int_{B_r(x_0)} |G(x+x_k, y) -G(x+x_k, x_0)| \mu_k(dy) \nonumber \\
&\leq \sup_{B_r(x_0)}|G(x+x_k, y) -G(x+x_k, x_0)| \nonumber \\
& \to 0
\end{align}
as $r\to0$ locally uniformly in $x \in B_{3R_0} \setminus \{0\}$. From (\ref{eq306'})-(\ref{eq307'}), it follows that
\begin{equation*} 
w_k(x)+\log\int_{\Omega}e^{v_k} \to \lambda_0 \int_{I_+} \alpha \mathcal P(d\alpha) G(\cdot+x_0, x_0),
\end{equation*}
as $k \to \infty$ locally uniformly in $x \in B_{3R_0} \setminus \{0\}$. Furthermore, we have
\begin{equation*}
\frac{\partial}{\partial x_{ij}} \big(w_k(x)+\log\int_{\Omega}e^{v_k} \big)=\int_{\Omega} \frac{\partial}{\partial x_{ij}} G(x+x_k, y)\mu_k(dy),
\end{equation*}
for $i, j=1, 2$ so that by the same argument here we obtain (\ref{eq304}). (\ref{eq306}) is the direct consequence of (\ref{eq304}). 
\end{proof}

Note that $B_{2R_0} \subset \Omega-\{x_k\}$ for large $k \in \mathbb N$. We decompose $w_k$ as $w_k=w_k^{(1)}+w_k^{(2)}$, using the solutions $w_k^{(1)}$ and $w_k^{(2)}$ to
\begin{align*}
-\Delta w_k^{(1)}&=g_k \quad {\rm in} \hspace{2mm}B_{2R_0}, \quad w_k^{(1)}=0 \quad \hspace{2.8mm} {\rm on} \hspace{2mm} \partial B_{2R_0},\\
-\Delta w_k^{(2)}&=0 \quad \hspace{1.8mm} {\rm in} \hspace{2mm}B_{2R_0}, \quad w_k^{(2)}=w_k \quad {\rm on} \hspace{2mm} \partial B_{2R_0}.
\end{align*}
where
\begin{equation*}
g_k=g_k(x)=\lambda_k \int_{I_+} \alpha e^{w_{k, \alpha}(y)} \mathcal P(d\alpha),
\end{equation*}
and $R_0>0$ as in (\ref{eq209}). By the maximum principle and Lemma \ref{lem300}, we also have $C_2>0$ independent of $k$ such that
\begin{equation*}
\osc_{ \overline B_{2R_0}} w_k^{(2)}\leq C_2.
\end{equation*}
Thus it holds that
\begin{equation} \label{eq308}
w_k(x)-w_k(0)=w_k^{(1)}(x)-w_k^{(1)}(0)+O(1)
\end{equation}
as $k \to \infty$ uniformly in $x \in B_{2R_0}$.\

Let $G_0=G_0(x, y)$ be the another Green function defined by
\begin{equation*}
-\Delta_x G_0(\cdot, y)=\delta_y \quad {\rm in} \hspace{2mm} B_{2R_0}, \quad G_0(\cdot, y)=0 \quad {\rm on} \hspace{2mm} \partial B_{2R_0}.
\end{equation*}
Then it holds that 
\begin{equation} \label{eq310}
w_k^{(1)}(x)-w_k^{(1)}(0)=\int_{B_{2R_0}} (G_0(x, y)-G_0(0, y))g_k(y) dy
\end{equation}
for $x \in B_{2R_0}$. We have, more precisely,
\begin{equation*}
  G_0(x, y) = \begin{cases}
    \Gamma(|x-y|)-\Gamma(\frac{|y|}{2R_0}|x-\overline y|), &{\rm if} \hspace{2mm} y\neq 0, \hspace{1mm}y\neq x,\\
    \Gamma(|x|)-\Gamma(2R_0) &{\rm if} \hspace{2mm} y=0, \hspace{1mm}y\neq x,
  \end{cases}
\end{equation*}
using the fundamental solution and the Kelvin transformation:
\begin{equation*}
\Gamma(|x|)=\frac{1}{2\pi}\log \frac{1}{|x|}, \quad \overline y=\Big(\frac{2R_0}{|y|} \Big)^2 y,
\end{equation*}
which implies
\begin{equation*}
G_0(x, y)-G_0(0, y)=\frac{1}{2\pi}\log \frac{|y|}{|x-y|}-\frac{1}{2\pi}\log \frac{|\overline y|}{|x-\overline y|}
\end{equation*}
for $y \in B_{2R_0}$ satisfying $y \neq x$ and $y \neq 0$.\

By 
\begin{equation*}
\frac{2}{3} \leq \frac{|\overline y|}{|x-\overline y|} \leq 2, \quad x \in B_{R_0}, \quad y\in B_{2R_0}\setminus\{0\},
\end{equation*}
and 
\begin{equation*}
0 \leq \int_{B_{2R_0}} g_k \leq \lambda_k \int_{I_+} \alpha \mathcal P(d\alpha)=O(1),
\end{equation*}
we end up with
\begin{equation} \label{eq312}
\int_{B_{2R_0}} (G_0(x, y)-G_0(0, y))g_k(y) dy =\frac{1}{2\pi} \int_{B_{2R_0}} g_k(y) \log \frac{|y|}{|x-y|} dy+O(1)
\end{equation}
as $k \to \infty$ uniformly in $x \in B_{R_0}$.\

Consequently, (\ref{eq308})-(\ref{eq312}) yield
\begin{equation*} 
w_k(x)-w_k(0)=\frac{1}{2\pi} \int_{B_{2R_0}} g_k(y) \log \frac{|y|}{|x-y|} dy+O(1)
\end{equation*}
as $k \to \infty$ uniformly in $x \in B_{R_0}$. This means \\
\begin{align} \label{eq313}
\tilde{w}_k(x)&=\frac{1}{2\pi} \int_{B_{2R_0}}  g_k(y) \log \frac{|y|}{|\sigma_k x-y|} dy+O(1) \nonumber \\ 
&=\frac{1}{2\pi} \int_{B_{2R_0 \sigma_k^{-1}}} \tilde{f}_k(z) \log \frac{|z|}{|x-z|} dz+O(1)
\end{align}
as $k \to \infty$ uniformly in $x \in B_{R_0 \sigma_k^{-1}}$, where $\tilde{f}_k=\lambda_k \int_{I_+} \alpha e^{\tilde{w}_{k, \alpha}} \mathcal P(d\alpha)$.\

Let $\beta_0$ be as in (\ref{eq240'}), and put
\begin{equation} \label{eq314}
\beta_k:=\int_{B_{2R_0 \sigma_k^{-1}}} \tilde{f}_k.
\end{equation}
To employ the argument of \cite{key030}, we prepare the following lemma.

\begin{Lem} \label{lem302}
For any $\epsilon>0$, there exists $k_\epsilon \in \mathbb N$ and $L_\epsilon >0$such that 
\begin{equation} \label{eq316}
\int_{B_{2R_0 \sigma_k^{-1}} \setminus B_{L_{\epsilon}}} \tilde{f}_k dx \leq \epsilon
\end{equation}
for $k \geq k_\epsilon$.
\end{Lem}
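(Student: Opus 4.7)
The plan is to split
\[
\int_{B_{2R_0 \sigma_k^{-1}} \setminus B_L} \tilde f_k\, dx \;=\; \beta_k - \int_{B_L} \tilde f_k\, dx,
\]
and show that each piece converges, as $k\to\infty$, to a quantity that can be made $\epsilon$-close by choosing $L$ large: namely $\beta_k\to\beta_0$ and $\int_{B_L}\tilde f_k\,dx\to\int_{B_L}\tilde f\,dx$. Since $\tilde f\in L^1(\mathbb R^2)$ with $\int_{\mathbb R^2}\tilde f\,dx=\beta_0$, we can make $\beta_0-\int_{B_L}\tilde f\,dx$ arbitrarily small by taking $L$ large, and then take $k$ large enough so the two $k$-dependent limits are close to their limits. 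This yields (\ref{eq316}).

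The convergence $\int_{B_L}\tilde f_k\to\int_{B_L}\tilde f$ for each fixed $L>0$ is immediate from Proposition \ref{pro201}, which gives $\tilde f_k\to\tilde f$ in $C^2_{loc}(\mathbb R^2)$.

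The main step is to verify $\beta_k\to\beta_0$. I would perform the change of variables $y=\sigma_k x+x_k$: recalling $\tilde f_k(x)=\lambda_k\int_{I_+}\alpha e^{\tilde w_{k,\alpha}(x)}\mathcal P(d\alpha)$ and $e^{\tilde w_{k,\alpha}(x)}=\sigma_k^2 e^{w_{k,\alpha}(\sigma_k x)}$, one obtains
\[
\beta_k \;=\; \int_{B_{2R_0\sigma_k^{-1}}}\tilde f_k(x)\,dx \;=\; \int_{B_{2R_0}(x_k)} \mu_k(dy),
\]
with $\mu_k$ as in (\ref{eq114}). By (\ref{eq116}), the assumption $s(x)\equiv 0$, and Remark \ref{rem105} (so $\#S=1$ with $S=\{x_0\}$), the weak-$*$ limit of $\mu_k$ is $m(x_0)\delta_{x_0}$, and $\int_\Omega\mu_k=\lambda_k\int_{I_+}\alpha\,\mathcal P(d\alpha)\to\overline\lambda\int_{I_+}\alpha\,\mathcal P(d\alpha)=m(x_0)$. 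Since $x_k\to x_0$, for $k$ large we have $\overline{B_{R_0}(x_0)}\subset B_{2R_0}(x_k)\subset\Omega$, so that $\int_{B_{2R_0}(x_k)}\mu_k$ is squeezed between $\int_{B_{R_0}(x_0)}\mu_k$ and $\int_\Omega\mu_k$; both tend to $m(x_0)$ by the weak-$*$ convergence combined with residual vanishing (no mass escapes into the interior). Thus $\beta_k\to m(x_0)=\beta_0$, using (\ref{eq263}).

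Finally, given $\epsilon>0$, pick $L_\epsilon$ with $\int_{B_{L_\epsilon}}\tilde f\,dx\geq\beta_0-\epsilon/3$, then $k_\epsilon\in\mathbb N$ so that both $|\beta_k-\beta_0|\leq\epsilon/3$ and $\bigl|\int_{B_{L_\epsilon}}\tilde f_k\,dx-\int_{B_{L_\epsilon}}\tilde f\,dx\bigr|\leq\epsilon/3$ hold for $k\geq k_\epsilon$. Combining these three estimates yields
\[
\int_{B_{2R_0\sigma_k^{-1}}\setminus B_{L_\epsilon}}\tilde f_k\,dx \;\leq\; \epsilon.
\]
The only real obstacle is the identification $\beta_k\to\beta_0$, and the key ingredient there is the residual vanishing assumption $s\equiv 0$, which is precisely what prevents mass from leaking into the bulk away from the (single) blow-up point $x_0$.
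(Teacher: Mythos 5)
Your proof is correct, and the top-level decomposition is the same as the paper's: write the tail integral as $\beta_k-\int_{B_L}\tilde f_k$, handle the second term by the $C^2_{loc}$ convergence $\tilde f_k\to\tilde f$ from Proposition \ref{pro201}, and make the remainder small by the $L^1$ tail of $\tilde f$. The one place you diverge from the paper is in how $\beta_k\to\beta_0$ is established. The paper does not go back to physical variables at all: it simply uses the pointwise constraint (\ref{eq207}) to get the trivial bound $\beta_k\le\lambda_k\int_{I_+}\alpha\,\mathcal P(d\alpha)$, and then invokes (\ref{eq263}) (residual vanishing together with the mass identity (\ref{eq120})) to note $\overline\lambda\int_{I_+}\alpha\,\mathcal P(d\alpha)=\beta_0$, so that $\limsup_k\beta_k\le\beta_0$; combined with $\liminf_r\liminf_k I_1^{k,r}=\beta_0$, this squeezes $I_2^{k,r}$ to zero. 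You instead undo the rescaling $y=\sigma_kx+x_k$ to identify $\beta_k=\mu_k(B_{2R_0}(x_k))$ and then run a weak-$*$ argument for $\mu_k$, using $\#S=1$, $s\equiv0$, and $x_k\to x_0$ to squeeze $\mu_k(B_{2R_0}(x_k))$ between $\mu_k(B_{R_0}(x_0))$ and $\mu_k(\Omega)$, both $\to m(x_0)$. Your route is more geometric and makes the role of residual vanishing explicit, but it requires the change of variables and some care with what weak-$*$ convergence gives on balls (lower semicontinuity on open sets plus the matching upper bound on total mass); the paper's version is shorter because it extracts the needed $\limsup$ bound directly from the normalization $\int_\Omega e^{\alpha v_k}/\int_\Omega e^{\alpha v_k}=1$ without ever leaving the rescaled picture. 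Both ultimately rest on the same two hypotheses, the mass identity (\ref{eq120}) and residual vanishing.
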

\begin{proof}
For any $r>0$, setting
\begin{equation*}
\beta_k =\int_{B_{r}} \tilde{f}_k dx+ \int_{B_{2R_0 \sigma_k^{-1}} \setminus B_r} \tilde{f}_k dx =: I_1^{k, r} +I_2^{k, r}.
\end{equation*}
By (\ref{eq210}), we have
\begin{equation} \label{eq318}
\lim_{r \to \infty} \lim_{k \to \infty} I_1^{k, r}=\int_{\mathbb R^2} \tilde{f} dx.
\end{equation}
Moreover for any $k$, it holds that
\begin{equation} \label{eq320}
\beta_k \leq \lambda_k \int_{I_+} \alpha \mathcal P(d\alpha).
\end{equation}
Therefore from (\ref{eq318}), (\ref{eq320}) and (\ref{eq120}), we obtain
\begin{equation*}
I_2^{k, r}=o(1) \quad {\rm as} \hspace{2mm} k \to \infty, \quad r \to \infty
\end{equation*}
and get the desired result. 
\end{proof}

By the result of Lemma \ref{lem302}, we have
\begin{equation} \label{eq322}
\lim_{k \to \infty} \beta_k =\beta_0.
\end{equation}

\begin{Lem} \label{lem304}
For every $0<\epsilon \ll 1$, there exists ${R}_{\epsilon}>0$ and $C_{4, \epsilon}>0$ such that
\begin{equation} \label{eq324}
\tilde{w}_k(x) \leq -(\frac{\beta_k}{2\pi}-\epsilon)\log|x|+C_{4, \epsilon}
\end{equation}
for $k \gg 1$ and $x \in B_{R_0 {\sigma_k^{-1}}} \setminus B_{R_\epsilon}$.\
\end{Lem}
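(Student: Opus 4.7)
The plan is to start from the integral representation (\ref{eq313}), namely
\[
\tilde{w}_k(x) = \frac{1}{2\pi}\int_{B_{2R_0 \sigma_k^{-1}}} \tilde{f}_k(z) \log\frac{|z|}{|x-z|}\,dz + O(1),
\]
and split the domain into an inner piece where almost all of the mass sits and an outer piece where the total mass is at most $\epsilon$ by Lemma \ref{lem302}. The target estimate (\ref{eq324}) asserts that the leading logarithmic behavior of $\tilde{w}_k$ is governed by $\beta_k$, so I must extract a clean $-(\beta_k/2\pi)\log|x|$ contribution from the inner piece and show that the outer piece contributes at most $\epsilon\log|x|$ plus a harmless constant.

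Fix $\epsilon > 0$, let $L_\epsilon$ and $k_\epsilon$ be as in Lemma \ref{lem302}, and set $R_\epsilon := \max\{2L_\epsilon, e\}$ so that $\log|x| > 0$ on $|x|\geq R_\epsilon$. On the inner region $|z|\leq L_\epsilon$, the condition $|x|\geq 2L_\epsilon$ gives $|x-z|\geq |x|/2$, whence $\log(|z|/|x-z|) \leq \log(2L_\epsilon) - \log|x|$. Since the inner mass satisfies $\int_{|z|\leq L_\epsilon}\tilde{f}_k\,dz \geq \beta_k - \epsilon$, multiplying by the negative quantity $\log(2L_\epsilon)-\log|x|$ and using that $\beta_k$ is bounded uniformly in $k$ (by (\ref{eq320})) yields, for $k\geq k_\epsilon$,
\[
\frac{1}{2\pi}\int_{|z|\leq L_\epsilon}\tilde{f}_k(z)\log\frac{|z|}{|x-z|}\,dz \leq -\frac{\beta_k - \epsilon}{2\pi}\log|x| + C_\epsilon.
\]

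For the outer region $L_\epsilon \leq |z| \leq 2R_0\sigma_k^{-1}$, I will split further according to whether $|x-z|\geq 1$ or $|x-z|<1$. In the first subregion, the elementary inequality $\log(|z|/|x-z|) \leq \log(1+|x|/|x-z|) \leq \log(1+|x|)$ (valid regardless of sign, since the right-hand side is $\geq 0$) combined with the outer mass bound contributes at most $\frac{\epsilon}{2\pi}\log(1+|x|)$. In the subregion $|x-z|<1$ I decompose the integrand as $\log|z| - \log|x-z|$: the first term is controlled by $|z|\leq |x|+1$ together with the outer mass bound, giving $\frac{\epsilon}{2\pi}\log(|x|+1)$; the second term is dominated by $\|\tilde{f}_k\|_\infty \int_{|y|<1}|\log|y||\,dy = O(1)$ thanks to the uniform $L^\infty$ bound (\ref{eq220}) and the local integrability of the logarithm in the plane. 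Collecting everything,
\[
\tilde{w}_k(x) \leq -\Bigl(\frac{\beta_k}{2\pi} - C\epsilon\Bigr)\log|x| + C_\epsilon
\]
for some numerical constant $C$, and relabelling $\epsilon$ by $\epsilon/C$ recovers (\ref{eq324}).

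The delicate point is the outer-integral estimate in the regime $|x-z|<1$, where the kernel has a logarithmic singularity and the integration variable $z$ may approach the evaluation point $x$. The rescue is that $\tilde{f}_k$ is uniformly bounded on $\mathbb R^2$ and $\log|\cdot|$ is locally integrable in two dimensions, so the singular contribution is absorbed into the constant $C_{4,\epsilon}$ while the non-singular part inherits the smallness $\epsilon$ from Lemma \ref{lem302}. No growth of $R_0\sigma_k^{-1}$ enters, since both subregional bounds only involve $|x|$ and $\epsilon$.
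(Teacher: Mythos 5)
Your proof is correct and follows the same overall strategy as the paper's: start from the representation (\ref{eq313}), use Lemma \ref{lem302} to isolate an inner region carrying all but $\epsilon$ of the mass, and on that inner region bound the kernel by $\log(2L_\epsilon)-\log|x|$ to extract the leading term $-(\frac{\beta_k}{2\pi}-\epsilon)\log|x|+O_\epsilon(1)$. Where you differ is the organization of the complement. The paper's $K_k^2,K_k^3$ cut around the evaluation point into a shrinking singular disc $D_1(x)=B_{|x|^{-1}}(x)$, an intermediate annulus $D_2(x)=B_{|x|/2}(x)\setminus D_1(x)$, and the far set $|x-y|>|x|/2$, bounding the kernel by $\log 3$ on the far set and by $2\log|x|+O(1)$ on the annulus; you instead use a single fixed-radius dichotomy $|x-z|\gtrless 1$. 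Both treatments absorb the logarithmic singularity the same way, via the uniform bound (\ref{eq220}) on $\tilde{f}_k$ together with $\int_{|y|<1}|\log|y||\,dy<\infty$, and both use the small outer mass from Lemma \ref{lem302} to keep the remaining error at $O(\epsilon)\log|x|$. Your cut is somewhat leaner --- no $|x|^{-1}$ scale, no intermediate ring --- at the price of a cruder pointwise kernel bound $\log(1+|x|)$ on the whole non-singular outer part where the paper gets $\log 3$ on its far set; since that region carries only $\epsilon$ mass this costs nothing. Both arrive at (\ref{eq324}) after a harmless relabelling of $\epsilon$.
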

\begin{proof}
By (\ref{eq316}), given $0<\epsilon \ll 1$, there exists ${R}_\epsilon>0$, $k_{\epsilon} \in \mathbb N$ such that
\begin{equation} \label{eq326}
\frac{1}{2\pi} \int_{B_{R_\epsilon /2}} \tilde{f}_k dx \geq \frac{\beta_k}{2\pi}-\frac{\epsilon}{4}
\end{equation}
for $k \geq k_\epsilon$. It follows from (\ref{eq313}) that
\begin{equation} \label{eq328}
\tilde{w}_k(x)=K_k^1(x)+K_k^2(x)+K_k^3(x)+O(1), \quad k \to \infty,
\end{equation}
uniformly in $x \in B_{R_0 \sigma_k^{-1}} \setminus B_{R_\epsilon}$, where
\begin{align*}
K_k^1(x) &=\frac{1}{2\pi}\int_{B_{R_\epsilon /2}} \tilde{f}_k(y) \log\frac{|y|}{|x-y|}dy,\\
K_k^2(x) &=\frac{1}{2\pi}\int_{B_{|x|/2}(0)} \tilde{f}_k(y) \log\frac{|y|}{|x-y|}dy, \\
K_k^3(x) &=\frac{1}{2\pi}\int_{B'(x)} \tilde{f}_k(y) \log\frac{|y|}{|x-y|}dy,
\end{align*}
for $B'(x)=B_{R_0 \sigma_k^{-1}} \setminus (B_{R_\epsilon/2} \cap B_{|x|/2}(x))$.\

Since
\begin{equation*}
\frac{|y|}{|x-y|} \leq 2\frac{|y|}{|x|} \leq \frac{R_\epsilon}{|x|}, \quad y \in B_{R_\epsilon/2}, \hspace{2mm} x \in B_{R_0 \sigma_k^{-1}}\setminus B_{R_\epsilon},
\end{equation*}
there exists $C_{5, \epsilon}>0$ such that
\begin{align} \label{eq330}
K_k^1(x)  &\leq \frac{1}{2\pi} (\log R_\epsilon -\log |x|) \int_{B_{R_\epsilon /2}} \tilde{f}_k(y) \nonumber \\
&\leq C_{5, \epsilon}-(\frac{\beta_k}{2\pi }- \frac{\epsilon}{4}) \log|x|
\end{align}
for $k \geq k_\epsilon$ and $x \in B_{R_0 \sigma_k^{-1}}\setminus B_{R_\epsilon}$ by (\ref{eq326}). We also have 
\begin{equation*}
\frac{|y|}{|x-y|}\leq 3, \quad y \in B_{R_0 \sigma_k^{-1}} \setminus {B_{|x|/2}(x)},
\end{equation*}
and hence
\begin{align} \label{eq332}
K_k^3(x) &\leq \frac{\log3}{2\pi} \int_{B'(x)} \tilde{f}_k \leq \frac{\log3}{2\pi}\| \tilde{f}_k \|_{L^1(B_{2R_0 \sigma_k^{-1}})} \nonumber \\
& \leq \frac{\lambda_k \log3}{2\pi} \int_{I_+} \alpha \mathcal P(d\alpha),
\end{align}
for large $k$ and $x \in B_{R_0 \sigma_k^{-1}} \setminus B_{R_\epsilon}$.\

Now we take
\begin{equation*}
D_1(x)=B_{|x|^{-1}}(x), \quad D_2(x)=B_{|x|/2}(x) \setminus B_{|x|^{-1}}(x)
\end{equation*}
for $|x| > R_{\epsilon} \geq \sqrt{2}$. Since
\begin{equation*}
|y|<|x|+1/|x|, \quad y \in D_1(x)
\end{equation*}
and
\begin{equation*}
\frac{|y|}{|x-y|} \leq \frac{3}{2}|x|^2, \quad y \in D_2(x), \hspace{2mm} x \in B_{R_0 \sigma_k^{-1}} \setminus B_{R_\epsilon},
\end{equation*}
we have
\begin{align} \label{eq334}
&\frac{1}{2\pi}\int_{D_2(x)} \tilde{f}_k(y) \log\frac{|y|}{|x-y|}dy \nonumber \\
&\leq \frac{1}{2\pi}\int_{\frac{1}{2}|x|\leq |y| \leq \frac{3}{2} |x|} (2\log|x|+\log\frac{3}{2})\tilde{f}_k(y) dy\nonumber \\
&\leq \frac{1}{2\pi}\int_{\frac{R_\epsilon}{2}\leq |y| \leq \frac{3}{2} R_0 \sigma_k^{-1} } (2\log|x|+\log\frac{3}{2})\tilde{f}_k(y) dy \nonumber \\
&\leq \frac{1}{2} \epsilon \log|x|+O(1)
\end{align}
on the other hand,
\begin{align} \label{eq336}
&\frac{1}{2\pi}\int_{D_1(x)} \tilde{f}_k(y) \log\frac{|y|}{|x-y|}dy \nonumber \\
&=\frac{\|\tilde{f}_k \|_{L^\infty (D_1)}}{2\pi}\int_{D_1(x)}  \Big| \log\frac{1}{|x-y|} \Big| dy + \frac{1}{2\pi}\int_{D_1(x)} \tilde{f}_k(y)(\log |x| +C) dy \nonumber \\
& \leq C \int_0^{|x|^{-1}} r |\log r| dr+ \frac{1}{4}\epsilon \log |x| +O(1) \nonumber \\
&=\frac{1}{4}\epsilon \log |x| +O(1)
\end{align}
for $k \gg 1$, $x \in B_{R_0 \sigma_k^{-1}} \setminus B_{R_\epsilon}$. From (\ref{eq334}) and (\ref{eq336}), we get
\begin{equation} \label{eq338}
K_k^2(x) \leq \frac{3}{4}\epsilon \log|x|+C
\end{equation}
for $k \gg 1$, $x \in B_{R_0 \sigma_k^{-1}} \setminus B_{R_\epsilon}$.
From (\ref{eq330}), (\ref{eq332}), and (\ref{eq338}), we get the desired result. 
\end{proof}

\begin{Lem} \label{lem306}
It holds that
\begin{equation} \label{eq340}
\int_{B_{R_0 {\sigma_k}^{-1}}} \tilde{f}_k(y)|\log|y| |dy=O(1) \quad {\rm as} \hspace{2mm} k\to\infty.
\end{equation}
\end{Lem}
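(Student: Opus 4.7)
\textbf{Proof proposal for Lemma \ref{lem306}.}

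The plan is to split the domain of integration at some fixed radius and handle the inner and outer parts separately. Write
\[
\int_{B_{R_0 \sigma_k^{-1}}} \tilde{f}_k(y)|\log|y||\,dy
= \int_{|y|\leq R_\epsilon} + \int_{R_\epsilon < |y| \leq R_0\sigma_k^{-1}},
\]
where $R_\epsilon$ is the radius supplied by Lemma \ref{lem304}, with $\epsilon>0$ to be chosen small. On the inner region, the uniform bound $0 \leq \tilde{f}_k \leq \lambda_k \int_{I_+}\beta\,\mathcal{P}(d\beta) = O(1)$ from (\ref{eq220}) together with the local integrability of $|\log|y||$ already gives an $O(1)$ contribution, so this part is routine.

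The substantive work is the outer region, where I need pointwise decay of $\tilde{f}_k$. From (\ref{eq216}) and (\ref{eq218}) one has $\tilde{w}_{k,\beta}(x) \leq \beta\,\tilde{w}_k(x)$ for every $\beta \in I_+\setminus\{0\}$, and Lemma \ref{lem304} yields
\[
\tilde{w}_k(x) \leq -\Bigl(\frac{\beta_k}{2\pi}-\epsilon\Bigr)\log|x| + C_{4,\epsilon},
\qquad |x| \geq R_\epsilon,\; k\gg 1.
\]
Combining these, I would derive
\[
e^{\tilde{w}_{k,\beta}(x)} \leq C\,|x|^{-\beta(\beta_k/2\pi - \epsilon)}
\]
uniformly in $\beta \in \mathrm{supp}\,\mathcal{P}$ for such $x$ and $k$. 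Since $\mathrm{supp}\,\mathcal{P}\subset[\alpha_{\min},1]$ and by Proposition \ref{pro205} together with Lemma \ref{lem302} we have $\alpha_{\min} > 4\pi/\beta_0$ and $\beta_k \to \beta_0$, I can fix $\epsilon>0$ small enough and $k$ large enough to guarantee
\[
\alpha_{\min}\Bigl(\frac{\beta_k}{2\pi}-\epsilon\Bigr) \geq 2+\delta
\]
for some $\delta>0$. Hence $\tilde{f}_k(x) = \lambda_k \int_{I_+} \beta e^{\tilde{w}_{k,\beta}(x)}\mathcal{P}(d\beta) \leq C' |x|^{-(2+\delta)}$ on $\{|x| \geq R_\epsilon\}$.

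With this pointwise bound in hand, the outer integral reduces to a convergent one-dimensional integral:
\[
\int_{R_\epsilon < |y| \leq R_0 \sigma_k^{-1}} \tilde{f}_k(y)\log|y|\,dy
\leq 2\pi C'\int_{R_\epsilon}^{\infty} r^{-(1+\delta)}\log r\,dr < \infty,
\]
bounded independently of $k$, completing the argument. The main obstacle is securing the strict inequality $\beta(\beta_k/2\pi - \epsilon) > 2$ uniformly for $\beta\in\mathrm{supp}\,\mathcal{P}$ and all large $k$; this is precisely where the mass identity (\ref{eq263}), via Proposition \ref{pro205}, is essential, since it provides the strict inequality $\alpha_{\min} > 4\pi/\beta_0$, and where the convergence $\beta_k \to \beta_0$ from Lemma \ref{lem302} enters to make the bound uniform in $k$.
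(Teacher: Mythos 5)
Your proposal is correct and matches the paper's own argument in all essentials: split the integral at the radius $R_\epsilon$ from Lemma \ref{lem304}, use the uniform $L^\infty$ bound on $\tilde f_k$ together with local integrability of $|\log|y||$ on the inner ball, and on the outer annulus derive a pointwise decay $\tilde f_k(y)\leq C|y|^{-(2+\delta)}$ by combining $e^{\tilde w_{k,\beta}}\leq e^{\beta\tilde w_k}\leq e^{\alpha_{\min}\tilde w_k}$ with Lemma \ref{lem304}, the strict inequality $\alpha_{\min}>4\pi/\beta_0$ from Proposition \ref{pro205}, and the convergence $\beta_k\to\beta_0$ from Lemma \ref{lem302}. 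This is precisely the paper's proof (there the exponent is $2+3\delta_0$ and the split radius is denoted $R_0'$), so no further comment is needed.
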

\begin{proof}
By $\lim_{k \to \infty} \beta_k=\beta_0$ and (\ref{eq243}), there exists $\epsilon_0>0$ and $\delta_0>0$ such that
\begin{equation} \label{eq342}
-\alpha_{min} (\frac{\beta_k}{2\pi}-\epsilon_0/2) \leq -(2+3\delta_0)
\end{equation}
for $k \gg 1$. Let
\begin{equation*}
R'_0=R_{\epsilon_0 /2}
\end{equation*}
for $R_\epsilon$ as in Lemma \ref{lem304} with $\epsilon=\epsilon_0/2$. Then, by (\ref{eq210})-(\ref{eq212}), (\ref{eq324}) and (\ref{eq342}) we obtain $C_{7, \epsilon_0}$ such that
\begin{align} \label{eq344}
\tilde{f}_k(y)&=\lambda_k \int_{I_{+}} e^{\tilde{w}_{k, \alpha}} \mathcal P(d\alpha) \nonumber \\
&\leq \lambda_k \int_{I_+} e^{\alpha\tilde{w}_{k}} \mathcal P(d\alpha) \nonumber \\
&\leq \lambda_k  {\rm exp}\Big[-\alpha_{min} \{(\beta_k/2\pi -\epsilon_0/2)\log|y|+C_{4, \epsilon_0} \}\Big] \nonumber \\
&\leq C_{7, \epsilon_0} |y|^{-(2+3\delta_0)}
\end{align}
for $k \gg 1$ and  $y \in B_{R_0 \sigma_k^{-1}} \setminus B_{R'_0}$.\

Therefore, we obtain $C_{8, \epsilon_0, \delta_0}>0$ such that
\begin{align*}
\int_{B_{R_0 {\sigma_k}^{-1}}} \tilde{f}_k(y)\big|\log|y| \big|dy &\leq \|\tilde{f} \|_{L^{\infty}(B_{R_0'})} \int_{B_{R'_0}} \big|\log|y| \big| dy \nonumber \\
&+ C_{7, \epsilon_0} \int_{B_{R_0 {\sigma_k}^{-1}} \setminus {B_{R'_0}}} |y|^{-(2+3\delta_0)}\log|y| dy \nonumber \\
&\leq C_{8, \epsilon_0, \delta_0}
\end{align*}
for $k \gg 1$, which means (\ref{eq340}). 
\end{proof}

\begin{Lem} \label{lem308}
There exists $\delta_0>0$ such that
\begin{equation*}
\tilde{w}_k(x)=-\frac{\beta_k}{2\pi} \log|x|+O(1) \quad {\rm as} \hspace{2mm} k\to\infty
\end{equation*}
uniformly in $x \in B_{R_0 \sigma_k^{-1}} \setminus B_{(\log\sigma_k^{-1})^{1/\delta_0}}$.
\end{Lem}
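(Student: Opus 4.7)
The plan is to start from the Green-type identity \eqref{eq313} and extract the leading logarithmic term. Decomposing
\begin{equation*}
\log\frac{|z|}{|x-z|}=\log|z|-\log|x|-\log\frac{|x-z|}{|x|},
\end{equation*}
Lemma \ref{lem306} handles the $\log|z|$ piece, the middle term contributes exactly $-\frac{\beta_k}{2\pi}\log|x|$ via the definition \eqref{eq314} of $\beta_k$, and the lemma reduces to establishing
\begin{equation*}
I_k(x):=\int_{B_{2R_0\sigma_k^{-1}}}\tilde{f}_k(z)\log\frac{|x-z|}{|x|}\,dz=O(1)
\end{equation*}
uniformly for $|x|\in[(\log\sigma_k^{-1})^{1/\delta_0},\,R_0\sigma_k^{-1}]$.

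To estimate $I_k$, I would partition $B_{2R_0\sigma_k^{-1}}$ into $R_1=\{|z|\le|x|/2\}$, $R_2=\{|x|/2<|z|<2|x|\}$, and $R_3=\{|z|\ge 2|x|\}\cap B_{2R_0\sigma_k^{-1}}$. On $R_1$, $|\log(|x-z|/|x|)|\le\log 2$, so \eqref{eq320} yields $O(1)$. On $R_2$, split further according to whether $|x-z|\ge 1$ or $|x-z|<1$; in both subcases the decay $\tilde{f}_k(z)\le C|z|^{-(2+3\delta_0)}\le C|x|^{-(2+3\delta_0)}$ from \eqref{eq344}, together with the area bound $|R_2|=O(|x|^2)$ and the local integrability of $|\log|x-z||$, produces contributions of order $|x|^{-3\delta_0}\log|x|=O(1)$. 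On the inner part of $R_3$ with $|z|\le R_0\sigma_k^{-1}$, the same pointwise decay and the substitution $r=|x|t$ give
\begin{equation*}
\int_{2|x|}^{R_0\sigma_k^{-1}}r^{-(1+3\delta_0)}\log\frac{3r}{|x|}\,dr=O(|x|^{-3\delta_0})=O(1).
\end{equation*}

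The delicate region is the outer annulus $A_k=\{R_0\sigma_k^{-1}\le|z|\le 2R_0\sigma_k^{-1}\}$, which \eqref{eq344} does not directly cover; its volume is $O(\sigma_k^{-2})$ and the logarithmic weight there can be as large as $\log(\sigma_k^{-1}/|x|)$. The strategy is to show that $\tilde{f}_k$ is extremely small on $A_k$: unscaled, $A_k$ corresponds to points $y$ in $B_{2R_0}(x_0)\setminus\overline{B_{R_0}(x_0)}$, on which $v_k$ is uniformly bounded by Lemma \ref{lem300}, while Proposition \ref{prop300} forces $\int_\Omega e^{\alpha v_k}\to\infty$. This makes each factor $e^{w_{k,\alpha}}$ in the definition of $\tilde{f}_k$ vanish quantitatively, yielding $\int_{A_k}\tilde{f}_k\,|\log(|x-z|/|x|)|\,dz=o(1)$. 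I expect this to be the main obstacle: converting the qualitative smallness of $\tilde{f}_k$ on $A_k$ into a quantitative estimate that dominates the unbounded logarithmic weight. The precise threshold $|x|\ge(\log\sigma_k^{-1})^{1/\delta_0}$ is calibrated so that the $\sigma_k$-polynomial decay inherited from \eqref{eq344} at the inner boundary of $A_k$ absorbs the factor $\log(\sigma_k^{-1}/|x|)$; a weaker threshold would leave an unbounded term, so the choice of this specific threshold is essential. Once $I_k(x)=O(1)$ is established, the lemma follows immediately.
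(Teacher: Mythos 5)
Your reduction to showing $I_k(x)=O(1)$ is sensible, but the decomposition $\log\frac{|z|}{|x-z|}=\log|z|-\log|x|-\log\frac{|x-z|}{|x|}$ is where the approach breaks down, and you have correctly identified the symptom (the outer annulus $A_k$) without being able to cure it. The problem is structural: on $A_k=\{R_0\sigma_k^{-1}\le|z|\le 2R_0\sigma_k^{-1}\}$ the original kernel $\log\frac{|z|}{|x-z|}$ is \emph{bounded} when $|z|\gg|x|$ (since $|x-z|\sim|z|$), but each of your two pieces $\log|z|$ and $\log\frac{|x-z|}{|x|}$ is of size $\log\sigma_k^{-1}$ there; your splitting destroys a cancellation that is already present in \eqref{eq313}. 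This infects even the first piece: Lemma \ref{lem306} only gives $\int_{B_{R_0\sigma_k^{-1}}}\tilde f_k|\log|z||\,dz=O(1)$, whereas \eqref{eq313} integrates over $B_{2R_0\sigma_k^{-1}}$, so the $\log|z|$ contribution from $A_k$ is $\approx\log\sigma_k^{-1}\cdot\int_{A_k}\tilde f_k$, which is only $o(1)\cdot\log\sigma_k^{-1}$ --- not $O(1)$ without a \emph{rate} for the $o(1)$. The decay estimate \eqref{eq344} does not extend to $A_k$, and the qualitative boundedness of $v_k$ on the corresponding unscaled annulus does not by itself yield a quantitative decay of $\tilde f_k$ that beats the $\log\sigma_k^{-1}$ weight, so the argument is genuinely open at this point.

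The paper avoids this by not subtracting $\beta_k\log|x|$ all at once. It introduces the partial mass $\beta_k'(x)=\int_{B_{|x|/2}}\tilde f_k$, writes $|\tilde w_k+\frac{\beta_k}{2\pi}\log|x||\le\frac{\beta_k-\beta_k'(x)}{2\pi}\log|x|+|\tilde w_k+\frac{\beta_k'(x)}{2\pi}\log|x||$, and in the second term the factor $\log|x|$ is only ``inserted'' over $B_{|x|/2}$, producing the modified kernel $\log\frac{|x||z|}{|x-z|}$ there (bounded by $|\log|z||+\log 2$, so Lemma \ref{lem306} applies) while the untouched kernel $\log\frac{|z|}{|x-z|}$ is kept over $\{|z|>|x|/2\}$, where it has no artificial growth at large $|z|$ and only a local log-singularity at $z=x$. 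The residual $(\beta_k-\beta_k'(x))\log|x|$ is then controlled (Step 1) using \eqref{eq344} and the threshold $|x|\ge(\log\sigma_k^{-1})^{1/\delta_0}$, which is precisely what that threshold is calibrated for. So the threshold is not, as you suggest, what absorbs the $\log$ weight on $A_k$; it governs the tail mass $\beta_k-\beta_k'(x)$. To repair your proof you would essentially be forced to rediscover the $\beta_k'(x)$ device (or an equivalent way of keeping $\log\frac{|z|}{|x-z|}$ intact for $|z|$ comparable to or larger than $|x|$); trying instead to prove strong pointwise decay of $\tilde f_k$ on $A_k$ is a harder and, as far as the tools in this paper go, unavailable route.
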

\begin{proof}
Let $\epsilon_0>0$ and $\delta_0>0$ as in (\ref{eq342}) and consider
\begin{equation} \label{eq346}
\beta_k^{\prime}(x)=\int_{B_{|x|/2}} \tilde{f}_k
\end{equation}
for $x \in B_{R_0 \sigma_k^{-1}} \setminus B_{(\log\sigma_k^{-1})^{1/\delta_0}}$ and $k \gg 1$.\\
First of all, 
\begin{equation} \label{eq348}
\big|\tilde{w}_k(x)+\frac{\beta_k}{2\pi} \log|x| \big|\leq (\frac{\beta_k}{2\pi}-\frac{\beta_k^{\prime}}{2\pi} )\log |x|+\big|\tilde{w}_k(x)+\frac{\beta_k^{\prime}(x)}{2\pi}\log|x| \big|
\end{equation}
for $x \in B_{R_0 \sigma_k^{-1}} \setminus B_{(\log\sigma_k^{-1})^{1/\delta_0}}$, $k \gg 1$. To get a estimate of right hand side of (\ref{eq348}), we divide this proof two steps.\

Step1. Since (\ref{eq344}) and (\ref{eq316}) hold, there exists $C_{9, \epsilon_0, \delta_0}>0$ such that
\begin{align} \label{eq350}
0 &\leq \beta_k-\beta_k^{\prime}(x) \leq \int_{B_{2R_0 \sigma_k^{-1}} \setminus B_{(\log\sigma_k^{-1})^{1/\delta_0}}} \tilde{f}_k \nonumber \\
&\leq C_{7, \epsilon_0} \int_{B_{R_0 \sigma_k^{-1}} \setminus B_{(\log\sigma_k^{-1})^{1/\delta_0}}} |y|^{-(2+3\delta_0)} dy + \int_{B_{2R_0 \sigma_k^{-1}} \setminus B_{R_0 \sigma_k^{-1}}} \tilde{f}_k\nonumber \\
&\leq C_{9, \epsilon_0, \delta_0} \sigma_k^{\frac{2}{\delta_0}+1}+o(1)
\end{align}
for $x \in B_{R_0 \sigma_k^{-1}} \setminus B_{(\log\sigma_k^{-1})^{1/\delta_0}}$, $k \gg 1$.\

Step2. By (\ref{eq313}),
\begin{align} \label{eq352}
\Big|\tilde{w}_k(x)+\frac{\beta_k^{\prime}(x)}{2\pi}\log|x| \Big | &\leq \Big| \frac{1}{2\pi}\int_{\frac{|x|}{2}<|y|<2R_0 \sigma_k^{-1}} \tilde{f}_k(y) \log\frac{|y|}{|x-y|} \Big|  \\
&+ \Big| \frac{1}{2\pi} \int_{B_{\frac{|x|}{2}}}  \tilde{f}_k(y) \log\frac{|x||y|}{|x-y|} +O(1)\Big| \label{eq354}
\end{align}
as $k \to \infty$.\

Note that if $z:= x/|y|$, $z_0:=y/|y|$ and $|z|<1/2$ then $1/2<|z-z_0|<3/2$, we have
\begin{align} \label{eq354'}
&\Big| \int_{\frac{|x|}{2}<|y|<2R_0 \sigma_k^{-1}} \tilde{f}_k(y) \log\frac{|y|}{|x-y|} \Big| \nonumber \\
&\leq \int_{\frac{|x|}{2}<|y|<2R_0 \sigma_k^{-1}} \tilde{f}_k(y) \Big| \log\frac{1}{|\frac{x}{|y|}-\frac{y}{|y|}|} \Big|\nonumber \\
&=\Bigg(\int_{\frac{1}{2}<\frac{|x|}{|y|}<2, |y|<2R_0 \sigma_k^{-1}}+\int_{0<\frac{|x|}{|y|}<\frac{1}{2}, |y|<2R_0 \sigma_k^{-1}}  \tilde{f}_k(y) \Big| \log\frac{1}{|\frac{x}{|y|}-\frac{y}{|y|}|} \Big|\Bigg) \nonumber \\
&\leq \int_{\frac{1}{2}<\frac{|x|}{|y|}<2, |y|<2R_0 \sigma_k^{-1}} \tilde{f}_k(y) \Big| \log\frac{1}{|\frac{x}{|y|}-\frac{y}{|y|}|}\Big|+\log2 \int_{2|x|<|y|<2R_0 \sigma_k^{-1}}  \tilde{f}_k(y).
\end{align}
Moreover, by (\ref{eq344}) and (\ref{eq316}), 
\begin{align} \label{eq355}
\int_{2|x|<|y|<2R_0 \sigma_k^{-1}} \tilde{f}_k(y)&\leq \int_{2(\log\sigma_k^{-1})^{\frac{1}{\delta}}<|y|<2R_0 \sigma_k^{-1}} \tilde{f}_k(y) \nonumber \\
&\leq \int_{2(\log\sigma_k^{-1})^{\frac{1}{\delta}}<|y|<R_0 \sigma_k^{-1}} \tilde{f}_k(y) + \int_{R_0 \sigma_k^{-1}<|y|<2R_0 \sigma_k^{-1}} \tilde{f}_k(y) \nonumber \\
&=o(1).
\end{align}
On the other hand, if $z:=y/|x|$ and $z_0:=x/|x|$ then we have
\begin{align} \label{eq355'}
&\int_{\frac{1}{2}<\frac{|x|}{|y|}<2, |y|<2R_0 \sigma_k^{-1}} \tilde{f}_k(y) \Big| \log\frac{1}{|\frac{x}{|y|}-\frac{y}{|y|}|}\Big|dy \nonumber \\
&=\int_{\frac{1}{2}<|z|<2, |y|<2R_0 \sigma_k^{-1}} \tilde{f}_k(|x|z) \Big| \log\frac{|z|}{|z-z_0|}\Big||x|^2 dz \nonumber \\
&\leq \int_{\frac{1}{2}<|z|<2} C_{7, \epsilon_0}(|x||z|)^{-(2+3\delta_0)} \Big| \log\frac{|z|}{|z-z_0|}\Big||x|^2 dz  \nonumber \\
&+ \int_{\frac{1}{2}<|z|<2, R_0 \sigma_k^{-1}<|y|<2R_0 \sigma_k^{-1}} \tilde{f}_k(|x|z) \Big| \log\frac{|z|}{|z-z_0|}\Big||x|^2 dz \nonumber \\
&\leq C'_{7, \epsilon_0} (\log\sigma_k^{-1})^{-3} + 5 \int_{R_0 \sigma_k^{-1}<|y|<2R_0 \sigma_k^{-1}} \tilde{f}_k(y)  dy \nonumber \\
&= o(1)
\end{align}
for $x \in B_{R_0 \sigma_k^{-1}} \setminus B_{(\log\sigma_k^{-1})^{1/\delta_0}}$, $k \gg 1$ by (\ref{eq344}) and (\ref{eq316}). \\
Therefore, from (\ref{eq354'})-(\ref{eq355'}), we obtain (\ref{eq352})$=o(1)$ as $k \to \infty$.\

Lastly, since $|y|<|x|/2$, $1/(2|y|) \leq|x-y| / (|x||y|) \leq 3/(2|y|)$ and (\ref{eq340}), it follows that
\begin{align*}
&\frac{1}{2\pi} \int_{|y|\leq \frac{|x|}{2}} \tilde{f}_k(y) \Big| \log\frac{|x||y|}{|x-y|}\Big| dy \\
&\leq \frac{1}{2\pi} \int_{|y|\leq R_0 \sigma_k^{-1}} \tilde{f}_k(y) (\log2+\log|y|) dy \\
&=O(1)
\end{align*}
as $k \to \infty$. Hence, $(\ref{eq354})=O(1)$ as $k \to \infty$.\

From Step1, Step2 and (\ref{eq348}), we complete the proof of Lemma \ref{lem308}.
\end{proof}

\noindent
{\bf Proof of Theorem \ref{th102}.} We take $\delta_0$ and $R'_0$ as in Lemma \ref{lem306}. First, (\ref{eq210}), (\ref{eq312}), and (\ref{eq314}) imply 
\begin{align} \label{eq356}
|\tilde{w}_k(x)+\frac{\beta_k}{2\pi} \log(1+|x|)| &\leq |\tilde{w}_k(x) |+\frac{\beta_k}{2\pi} \log(1+|x|) \nonumber \\
&\leq C_{12}
\end{align}
for $x \in B_{{R'_0}}$, while Lemma {\ref{lem308}} means 
\begin{align} \label{eq358}
\big|\tilde{w}_k(x)+\frac{\beta_k}{2\pi} \log(1+|x|) \big| \leq C_{13}, \quad x \in B_{R_0 \sigma_k^{-1}} \setminus B_{(\log \sigma_k^{-1})^{1/\delta_0}},
\end{align}
where $k \gg 1$.\

Now we put
\begin{align*}
\tilde{w}_k^+(x) &=-\frac{\beta_k}{2\pi} \log|x|+C_{14}+\frac{C_{7, \epsilon_0}}{9\delta_0^2}|x|^{-3\delta_0}\\
\tilde{w}_k^-(x) &=-\frac{\beta_k}{2\pi} \log|x|-C_{14}-\frac{1}{2}|\sigma_k x|^2
\end{align*}
for $C_{14}=1+\max \{C_{12}, C_{13}\}$ and $k \gg 1$, recalling (\ref{eq200}), and let
\begin{equation*}
A_k=B_{(\log \sigma_k^{-1})^{1/\delta_0}} \setminus B_{R'_0}.
\end{equation*}
Then (\ref{eq344}) implies
\begin{align*}
-&\Delta  \tilde{w}_k^+(x) =C_{7, \epsilon_0} |x|^{-(2+3\delta_0)} \geq \tilde{f}_k \quad in \hspace{2mm} A_k, \\
& \tilde{w}_k^+ \geq \tilde{w}_k \quad on \hspace{2mm} \partial A_k.
\end{align*}
Next, we have
\begin{align*}
-&\Delta  \tilde{w}_k^-(x) =-\sigma_k  \leq \tilde{f}_k \quad {in} \hspace{2mm} A_k, \\
& \tilde{w}_k^- \leq \tilde{w}_k \quad {on} \hspace{2mm} \partial A_k.
\end{align*}
Since $-\Delta \tilde{w}_k=\tilde{f}_k$ in $A_k$, it follows from the maximum principle that
\begin{equation} \label{eq360}
\tilde{w}_k^- \leq \tilde{w}_k \leq \tilde{w}_k^+ \quad {in} \hspace{2mm} A_k.
\end{equation}
Using
\begin{equation*}
\Big|\frac{1}{2} |\sigma_k x|^2 \Big| \leq C_{15}, \quad x \in B_{R_0 \sigma_k^{-1}}
\end{equation*}
and
\begin{equation*}
\Big| \frac{C_{7, \epsilon_0}}{9\delta_0^2}|x|^{-3\delta_0} \Big| \leq C_{16}, \quad x \in A_k,
\end{equation*}
we obtain
\begin{equation*}
\big|\tilde{w}_k(x)+\frac{\beta_k}{2\pi} \log|x| \big| \leq C_{14}+\max\{C_{15}, C_{16}\}, \quad x \in A_k
\end{equation*}
for $k \gg 1$.\

Properties (\ref{eq356})-(\ref{eq360}) and (\ref{eq204}) imply that
\begin{equation} \label{eq362}
w_k(x) - w_k(0)=-\Big(\frac{\beta_0}{2\pi}+o(1)\Big) \log (1+e^{w_k (0)/2} |x|)+O(1)
\end{equation}
as $k \to \infty$ uniformly in $x \in B_{R_0}$. We complete the proof of Theorem \ref{th102}. \qed

\section{Proof of Theorem \ref{th106}}
First, we prove Proposition \ref{prop102}.\\

\noindent
{\bf Proof of Proposition \ref{prop102}. } If (\ref{eq124}) holds then we get $S=\{ x_0 \}$ and 
\begin{equation*}
\int_{\mathbb R^2} \tilde{f} dx=m(x_0)=\overline\lambda \int_{I_+} \alpha \mathcal P(d\alpha),
\end{equation*}
which is the mass identity. Furthermore, the above mass identity and (\ref{eq240}) imply that
\begin{equation}
\tilde{\psi}(\beta)=1 \quad \mathcal P{\mathchar `-}a.e \hspace{1mm} on \hspace{1mm} I_+, \quad \alpha_{min}=\beta_{inf} \geq \frac{4\pi}{\beta_0}
\end{equation}
by Proposition \ref{pro205} and Lemma \ref{lem208}. By the argument of Proposition \ref{prop300}, it follows that
\begin{equation*}
\lim_{k \to \infty} \int_{\Omega} e^{\alpha v_k} dx=+\infty
\end{equation*}
for any $\alpha \in$ supp$\mathcal P$. This relation implies $s \equiv 0$ in (\ref{eq114}), that is, the residual vanishing occurs, see \cite{key056}, Lemma 4. The inverse is clearly true. \qed

Before proving Theorem \ref{th106}, we need to prepare some facts with the case general $\mathcal P(d\alpha)$.
It holds that
\begin{equation} \label{eq512}
\int_{I_+} \tilde{\psi}(\beta) \mathcal P(d\beta)=\Big(\int_{I_+} \phi_0(\beta)\tilde{\psi}(\beta)\mathcal P(d\beta)\Big)^2,
\end{equation}
where
\begin{equation} \label{eq514}
\phi_0(\beta)=\sqrt{\frac{\overline \lambda}{8\pi}\beta}.
\end{equation}

Let
\begin{align*}
\mathcal L^0(\psi) &=\int_{I_+} \phi_0(\beta)\psi(\beta) \mathcal P(d\beta)\\
\mathcal C_d &=\{\psi  \mid 0\leq \psi \leq 1 \quad \mathcal P {\mathchar`-} a.e. \hspace{1mm} on\hspace{1mm} I_+ \hspace{1mm}and \hspace{1mm}\int_{I_+} \psi(\beta) \mathcal P(d\beta)=d \}
\end{align*}
and $\chi_A$ be the characteristic function of the set A. The following lemma is a variant of the result of \cite{key026}.
\begin{Lem} \label{lem504}
For each $0<d\leq 1$, the value ${\rm sup}_{\psi \in \mathcal C_d} \mathcal L^0(\psi)$ is attained by
\begin{equation}\label{eq516}
\psi_d(\beta)=\chi_{\phi_0>s_d}(\beta)+c_d \chi_{\phi_0=s_d}(\beta)
\end{equation}
with $s_d$ and $c_d$ defined by
\begin{align} \label{eq518}
s_d&=\inf\{t \mid \mathcal P(\{\phi_0>t\}) \leq d\} \nonumber,\\
c_d\mathcal P(\{\phi_0=s_d\})&=d-\mathcal P(\{\phi_0>s_d\}), \quad 0\leq c_d \leq 1.
\end{align}
Furthermore, the maximizer is unique in the sense that $\psi_m=\psi_d$ $\mathcal P {\mathchar`-}$a.e. on $I_+$ for any maximizer $\psi_m \in \mathcal C_d$. 
\end{Lem}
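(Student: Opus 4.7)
The plan is to recognize Lemma \ref{lem504} as a bathtub-principle-type rearrangement, since we are maximizing the linear functional $\mathcal{L}^0(\psi)=\int_{I_+}\phi_0\psi\,d\mathcal{P}$ over the sliced constraint $0\le\psi\le 1$, $\int_{I_+}\psi\,d\mathcal{P}=d$; intuitively, $\psi$ should be pushed to $1$ where $\phi_0$ is largest, to $0$ where it is smallest, and filled to an intermediate level $c_d\in[0,1]$ on the critical level set $\{\phi_0=s_d\}$. The admissibility $\psi_d\in\mathcal{C}_d$ is immediate from \eqref{eq518}, since $\int_{I_+}\psi_d\,d\mathcal{P} = \mathcal{P}(\{\phi_0>s_d\}) + c_d\mathcal{P}(\{\phi_0=s_d\}) = d$, and the bound $0\le c_d\le 1$ reduces to the two-sided estimate $\mathcal{P}(\{\phi_0>s_d\})\le d\le \mathcal{P}(\{\phi_0\ge s_d\})$, which follows from the monotone continuity of $\mathcal{P}$ applied to sequences $t_n\downarrow s_d$ and $t_n\uparrow s_d$ combined with the definition of the infimum $s_d$.

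The main step is the identity
\begin{equation*}
\mathcal{L}^0(\psi_d) - \mathcal{L}^0(\psi) = \int_{I_+} (\phi_0(\beta)-s_d)(\psi_d(\beta)-\psi(\beta))\,\mathcal{P}(d\beta),
\end{equation*}
valid for any $\psi\in\mathcal{C}_d$ because $\int_{I_+}(\psi_d-\psi)\,d\mathcal{P} = 0$. A pointwise sign analysis on the three sets $\{\phi_0>s_d\}$, $\{\phi_0<s_d\}$, and $\{\phi_0=s_d\}$ shows the integrand is everywhere nonnegative: on $\{\phi_0>s_d\}$ both $\phi_0-s_d>0$ and $\psi_d-\psi=1-\psi\ge 0$, on $\{\phi_0<s_d\}$ both factors are nonpositive, and on $\{\phi_0=s_d\}$ the first factor vanishes. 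This gives $\mathcal{L}^0(\psi_d)\ge \mathcal{L}^0(\psi)$ and hence attainment of the supremum by $\psi_d$.

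For uniqueness, assume $\psi_m\in\mathcal{C}_d$ also attains the supremum; then the nonnegative integrand above vanishes $\mathcal{P}$-a.e., which forces $\psi_m=\psi_d$ $\mathcal{P}$-a.e.\ on $\{\phi_0\ne s_d\}$. The delicate point is handling the level set $\{\phi_0=s_d\}$, since on such a set $\mathcal{P}$-a.e.\ equality is not automatic from the sign argument alone. Here I would exploit the strict monotonicity of $\phi_0$ visible in \eqref{eq514}: the level set reduces to at most a single point $\beta_d$, and if $\mathcal{P}(\{\beta_d\})>0$ then the mass constraint $\int_{I_+}\psi_m\,d\mathcal{P}=d$ combined with the agreement off $\{\beta_d\}$ forces the single value $\psi_m(\beta_d)=c_d=\psi_d(\beta_d)$, while if $\mathcal{P}(\{\beta_d\})=0$ the level set has measure zero and the a.e.\ uniqueness claim is automatic.
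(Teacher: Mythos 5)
Your proposal is correct and takes essentially the same route as the paper: a bathtub-type rearrangement, with optimality established by the sign of $(\phi_0-s_d)(\psi_d-\psi)$ on the three level sets $\{\phi_0>s_d\}$, $\{\phi_0=s_d\}$, $\{\phi_0<s_d\}$, and uniqueness from the equality case plus the mass constraint. Subtracting $s_d\int(\psi_d-\psi)\,d\mathcal P=0$ up front to obtain a single nonnegative integrand is merely an algebraic tidying of the paper's chain of inequalities, and your explicit remark that the strict monotonicity of $\phi_0$ collapses the level set $\{\phi_0=s_d\}$ to at most one point makes the uniqueness step (which the paper leaves terse) more transparent.
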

\begin{proof}
Fix $0<d\leq 1$. Given $\psi \in \mathcal C_d$, we compute
\begin{align}
\int_{I_+} \phi_0(\psi_d-\psi)\mathcal P(d\mathbb \beta)&=\int_{\{\phi_0>s_d\}} \phi_0(\psi_d-\psi)\mathcal P(d\mathbb \beta)+s_d \int_{\{\phi_0=s_d\}} (\psi_d-\psi)\mathcal P(d\mathbb \beta)\nonumber \\
&-\int_{\{\phi_0<s_d\}} \phi_0 \psi \mathcal P(d\mathbb \beta)\nonumber \\
&\geq s_d \int_{\{\phi_0>s_d\}} (\psi_d-\psi)\mathcal P(d\mathbb \beta)+s_d \int_{\{\phi_0=s_d\}} (\psi_d-\psi)\mathcal P(d\mathbb \beta) \label{eq520} \\
&-\int_{\{\phi_0<s_d\}} \phi_0 \psi \mathcal P(d\mathbb \beta) \nonumber   \\
&\geq s_d \Big(\int_{\{\phi_0>s_d\}} (\psi_d-\psi)\mathcal P(d\mathbb \beta)+\int_{\{\phi_0=s_d\}} (\psi_d-\psi)\mathcal P(d\mathbb \beta) \label{eq521} \\
&-\int_{\{\phi_0<s_d\}} \psi \mathcal P(d\mathbb \beta)\Big)  \nonumber \\ 
&=s_d \int_{I_+} (\psi_d-\psi)\mathcal P(d\mathbb \beta)=0, \nonumber
\end{align}
which means that $\psi_d$ is the maximizer.\

The equalities hold in (\ref{eq520}) and (\ref{eq521}) if and only if $\psi$ is the maximizer, and so we shall derive the two conditions. The first condition is that $(\phi_0-s_d)(\psi_d-\psi)=0$ $\mathcal P{\mathchar`-}$a.e. on $\{\phi_0>s_d\}$, so that
\begin{equation}\label{eq522}
\psi=\psi_d \quad \mathcal P {\mathchar `-}a.e. \hspace{1mm}on \hspace{1mm}\{\phi_0>s_d\}
\end{equation}
by the monotonicity of $\phi_0$ and $\psi_d \geq \psi$ on $\{\phi_0>s_d\}$. The second one is that $(s_d-\phi_0)\psi=0$ $\mathcal P$-a.e. on $\{\phi_0<s_d\}$, or
\begin{equation}\label{eq524}
\psi=0 \quad \mathcal P{\mathchar `-}a.e. \hspace{1mm}on \hspace{1mm}\{\phi_0<s_d\}
\end{equation}
by the monotonicity of $\phi_0$ and $\psi \geq 0$. The uniqueness follows from (\ref{eq522}) and (\ref{eq524}) and $\psi_d$, $\psi \in \mathcal C_d$. 
\end{proof}
Let $d \in (0, 1]$ such that $\tilde{\psi} \in \mathcal C_d$ and from (\ref{eq512}), it holds that
\begin{equation*}
d=\int_{I_+} \tilde{\psi}(\beta) \mathcal P(d\beta)=\Big(\int_{I_+} \phi_0(\beta)\tilde{\psi}(\beta)\mathcal P(d\beta)\Big)^2.
\end{equation*}
Lemma \ref{lem212} and (\ref{eq514}) yield
\begin{equation} \label{eq526}
d=\mathcal P(\{\phi_0>s_d\})+c_d \mathcal P(\{\phi_0=s_d\})\leq \Bigg(\sqrt{\frac{\overline\lambda}{8\pi}}\int_{I_+} \psi_d(\beta)\beta\mathcal P(d\beta) \Bigg)^2
\end{equation}
for $\psi_d=\psi_d(\beta)$ defined by (\ref{eq516}) and (\ref{eq518}). By the monotonicity of $\phi_0=\phi_0(\beta)$, there exists the unique element $\beta_d \in I_+$ such that
\begin{equation*}
\phi_0(\beta_d)=s_d,
\end{equation*}
and then (\ref{eq526}) leads
\begin{equation} \label{eq528}
d=\mathcal P(\beta_d, 1]+c_d \mathcal P(\{\beta_d\}) \leq \frac{\overline\lambda}{8\pi} \Bigg(\int_{(\beta_d, 1]} \beta \mathcal P(d\beta)+c_d \beta_d \mathcal P(\{\beta_d\})\Bigg)^2.
\end{equation}
Here we introduce
\begin{equation} \label{eq530}
H(\tau)=\mathcal P(\beta_d, 1]+\tau \mathcal P(\{\beta_d\})-\frac{\overline\lambda}{8\pi}\Bigg(\int_{(\beta_d, 1]} \beta \mathcal P(d\beta)+\tau \beta_d \mathcal P(\{\beta_d\})\Bigg)^2.
\end{equation}
It follows from (\ref{eq126}) that
\begin{equation} \label{eq532}
H(0) \geq 0, \quad H(1) \geq0.
\end{equation}
{
\begin{Rem}\label{rem400}
Here we use the property of $\overline \lambda$ for (\ref{eq532}).
\end{Rem}
}
Moreover, we have either $c_d=0$ or $c_d=1$ if $\mathcal P(\{\beta_d\})>0$. In fact, since
\begin{equation*}
H''(\tau)=-\frac{\overline\lambda}{4\pi}\Bigg(\beta_d \mathcal P(\{\beta_d\})\Bigg)^2<0
\end{equation*}
by $\mathcal P(\{\beta_d\})>0$, it holds that $H(\tau)>0$ for $0<\tau<1$ by (\ref{eq532}). On the other hand, $H(c_d) \leq 0$ by (\ref{eq530}).\

We now claim
\begin{equation}\label{eq534}
\tilde{\psi}=\psi_d=\chi_{I_d} \quad \mathcal P{\mathchar `-} a.e. \hspace{1mm}on \hspace{1mm} I_+
\end{equation}
where
\begin{equation*}
  I_{d} = \begin{cases}
    [\beta_{d}, 1] &{\rm if} \hspace{1mm}\mathcal P(\{\beta_{d}\})>0 \hspace{1mm}{\rm and} \hspace{1mm}c_d=1,\\
    (\beta_{d}, 1] &{\rm if} \hspace{1mm}\mathcal P(\{\beta_{d}\})=0 \hspace{1mm}{\rm or} \hspace{2mm} \mathcal P(\{\beta_{d}\})>0 \hspace{1mm}{\rm and} \hspace{1mm}c_d=0.
  \end{cases}
\end{equation*}
First, we assume that $\mathcal P(\{\beta_{d}\})=0$. Then, $H(\tau)=H(0)$ for $\tau \in [0, 1]$. In this case, the equality holds in (\ref{eq530}) by (\ref{eq532}), and thus
\begin{equation*}
d=\Bigg(\int_{I_+} \phi_0(\beta)\psi_d(\beta) \mathcal P(d\beta)\Bigg)^2=\Bigg(\int_{I_+} \phi_0(\beta)\tilde{\psi}(\beta) \mathcal P(d\beta)\Bigg)^2,
\end{equation*}
which means $\tilde{\psi}=\psi_d$ $\mathcal P$-a.e.on $I_+$ by the uniqueness of Lemma \ref{lem504}. Note that the integrations are non-negative. It is clear that $\psi_d=\chi_{I_d}$ $\mathcal P$-a.e. on $I_+$. Next we assume that $\mathcal P(\{\beta_d \})>0$. Then we use (\ref{eq530}) and (\ref{eq532}) to obtain $H(c_d)=0$, which again implies that the equality holds in (\ref{eq530}), and hence
\begin{equation*}
  \tilde{\psi}=\psi_d = \begin{cases}
    \chi_{[\beta_d, 1]} &{\rm if} \hspace{2mm} c_d=1,\\
    \chi_{(\beta_d, 1]} &{\rm if} \hspace{2mm} c_d=0.
  \end{cases}
\end{equation*}
The claim (\ref{eq534}) is established.\

Here we divide two cases as $\beta_d > \beta_{inf}$ and $\beta_d \leq \beta_{inf}$.\\
First, we consider the case $\beta_d > \beta_{inf}$. Then, we have,
\begin{equation*}
\mathcal P(I_{inf}\setminus I_{d})=0.
\end{equation*}
Indeed, assume $\mathcal P(I_{inf} \setminus I_d)>0$. Then,
\begin{equation}\label{eq535}
\tilde{\psi}(\beta)=0 \quad {\rm for} \hspace{1mm} \mathcal P{\mathchar `-} a.e. \hspace{1mm} \beta \in I_{inf}\setminus I_d
\end{equation}
by (\ref{eq534}). On the other hand, $\tilde{\psi}(\beta)>0$ for any $\beta \in I_{inf}\setminus I_d$ by the definition of $I_{inf}$ and $\tilde{\psi}$, and by the convergence (\ref{eq210}), which contradicts (\ref{eq535}).\

In the case of $\beta_d \leq \beta_{inf}$, we obtain the following result:
\begin{Prop}\label{prop502}
Suppose $\beta_d \leq \beta_{inf}$ then it holds that
\begin{equation*}
\tilde{\psi}(\beta)=\chi_{I_{inf}}(\beta) \quad \mathcal P{\mathchar `-}a.e \hspace{1mm} \beta,
\end{equation*}
where 
\begin{equation*}
  I_{inf} = \begin{cases}
    [\beta_{inf}, 1] &{\rm if} \hspace{2mm} \beta_{inf}\in  \mathcal B,\\
    (\beta_{inf}, 1] &{\rm if} \hspace{2mm} \beta_{inf}\not\in  \mathcal B.
  \end{cases}
\end{equation*}
\end{Prop}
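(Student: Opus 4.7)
The plan is to combine two already-established characterizations of $\tilde{\psi}$: the extremal identification $\tilde{\psi}=\chi_{I_d}$ $\mathcal P$-a.e.\ from (\ref{eq534}), and a complementary support restriction coming from the definition of $\mathcal B$ in Section~2. Under the hypothesis $\beta_d \leq \beta_{inf}$, the set-theoretic interval $I_d$ contains $I_{inf}$ (up to endpoints), so once we know that $\tilde{\psi}$ vanishes $\mathcal P$-a.e.\ off $I_{inf}$, the two indicators must agree $\mathcal P$-a.e.\ and the conclusion is immediate.

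Concretely, I would first recall (\ref{eq534}), which has just been derived before the statement, to obtain $\tilde{\psi}=\chi_{I_d}$ $\mathcal P$-a.e.\ on $I_+$. Next, I would establish the key complementary fact that $\tilde{\psi}=0$ $\mathcal P$-a.e.\ on $I_+\setminus I_{inf}$. For any $\beta\in\mathrm{supp}\,\mathcal P$ with $\beta\notin\mathcal B$, the definition of $\mathcal B$ in (\ref{eq230}) together with Lemma \ref{lem201} yields $w_{k,\beta}(0)-w_k(0)\to -\infty$. Substituting into the identity (\ref{eq216}) and using $\tilde{w}_k\leq 0$ from (\ref{eq218}) gives $\tilde{w}_{k,\beta}(x)\to -\infty$ locally uniformly in $\mathbb R^2$, so $\int_{\omega} e^{\tilde{w}_{k,\beta}}\,dx\to 0$ for every bounded open $\omega\subset\mathbb R^2$. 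By the construction of $\tilde{\psi}^{\omega}$ in Lemma \ref{lem206}, this forces $\tilde{\psi}^{\omega}(\beta)=0$ for $\mathcal P$-a.e.\ $\beta$ outside $\mathcal B$, and passing to the monotone limit along $\omega_j=B_{R_j}$ as in Proposition \ref{pro203} yields $\tilde{\psi}(\beta)=0$ $\mathcal P$-a.e.\ on $I_+\setminus\mathcal B$. Since $\mathcal B=I_{inf}\cap\mathrm{supp}\,\mathcal P$ by (\ref{eq234}), this gives the desired complementary vanishing.

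Third, I would combine the two facts. Under $\beta_d\leq\beta_{inf}$, the definitions of $I_d$ and $I_{inf}$ in terms of left endpoints imply $I_d\supseteq I_{inf}$ except possibly at a single endpoint. On $I_d\setminus I_{inf}\subseteq I_+\setminus I_{inf}$ we then have simultaneously $\chi_{I_d}\equiv 1$ and $\tilde{\psi}=0$ $\mathcal P$-a.e.; matching through $\tilde{\psi}=\chi_{I_d}$ $\mathcal P$-a.e.\ forces $\mathcal P(I_d\setminus I_{inf})=0$. Consequently $\chi_{I_d}=\chi_{I_{inf}}$ $\mathcal P$-a.e.\ on $I_+$, which combined with (\ref{eq534}) yields $\tilde{\psi}=\chi_{I_{inf}}$ $\mathcal P$-a.e., as claimed.

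The main obstacle I anticipate is the careful handling of the common boundary point $\beta=\beta_d=\beta_{inf}$ when $\beta_d=\beta_{inf}$ and $\mathcal P(\{\beta_{inf}\})>0$. In that borderline situation both intervals $I_d$ and $I_{inf}$ carry their own open/closed convention (the former controlled by $c_d\in\{0,1\}$ via (\ref{eq518}), the latter by whether $\beta_{inf}\in\mathcal B$), and one must verify that these conventions are compatible: for instance, $c_d=1$ combined with $\beta_{inf}\notin\mathcal B$ would be inconsistent because $\tilde{\psi}(\beta_{inf})$ cannot equal both $1$ and $0$. The resolution should come by revisiting the convergence $\int_{\omega} e^{\tilde{w}_{k,\beta_{inf}}}\,dx$ for $\beta=\beta_{inf}$ — essentially reading off from $\tilde{w}_{k,\beta_{inf}}(0)=w_{k,\beta_{inf}}(0)-w_k(0)$ whether $\beta_{inf}\in\mathcal B$ — and checking that the resulting value of $\tilde{\psi}(\beta_{inf})$ at this atom forces the corresponding choice of $c_d$, so that the two parametrisations coincide at the endpoint. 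Apart from this endpoint bookkeeping, the proof is a direct merging of two assertions each of which has already been essentially proved earlier in the paper.
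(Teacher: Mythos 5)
Your approach matches the paper's: start from (\ref{eq534}) giving $\tilde{\psi}=\chi_{I_d}$ $\mathcal P$-a.e., then use the definition of $\mathcal B$ and (\ref{eq234}) to pin down the behavior of $\tilde{\psi}$ near $\beta_{inf}$, and conclude that $\chi_{I_d}=\chi_{I_{inf}}$ $\mathcal P$-a.e. The paper organizes this as an explicit enumeration of five sub-cases according to $\beta_d<\beta_{inf}$ or $\beta_d=\beta_{inf}$, the open/closed convention of $I_d$ (i.e., $c_d=0$ or $c_d=1$), and whether $\beta_{inf}\in\mathcal B$; your proposal is the same argument presented more compactly.

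One small but genuine imprecision: your step 3 as written derives only $\mathcal P(I_d\setminus I_{inf})=0$ and then concludes $\chi_{I_d}=\chi_{I_{inf}}$ $\mathcal P$-a.e., but that inference requires $\mathcal P(I_{inf}\setminus I_d)=0$ too. Under $\beta_d\le\beta_{inf}$ the set $I_{inf}\setminus I_d$ can be nonempty, namely when $\beta_d=\beta_{inf}$, $c_d=0$ (so $I_d=(\beta_d,1]$) and $\beta_{inf}\in\mathcal B$ (so $I_{inf}=[\beta_{inf},1]$); this is the paper's case (ii). There the relevant step is the \emph{positive} direction: $\beta_{inf}\in\mathcal B$ forces $\tilde{\psi}(\beta_{inf})>0$ via the subsequential limit $\tilde{w}_{k,\beta_{inf}}\to\tilde{w}_{\beta_{inf}}$, which is incompatible with $\tilde{\psi}(\beta_{inf})=\chi_{I_d}(\beta_{inf})=0$ unless $\mathcal P(\{\beta_{inf}\})=0$. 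Your obstacle paragraph gestures at this (``forces the corresponding choice of $c_d$'') but states explicitly only the complementary direction ($c_d=1$ with $\beta_{inf}\notin\mathcal B$); you should state and use both. Finally, note that both your proof and the paper's rest on the fact that at a $\mathcal P$-atom $\beta$ the density $\tilde{\psi}(\beta)$ equals the limit of $\int_\omega e^{\tilde{w}_{k,\beta}}dx$; this follows from the uniform bound $0\le\int_\omega e^{\tilde{w}_{k,\beta}}dx\le1$ by upgrading the weak-$*$ convergence of Lemma \ref{lem206} from $\mathcal M(I_+)$ to $L^\infty(I_+,\mathcal P)$, a point the paper also leaves implicit.
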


\begin{proof}
There are the following five possibilities:
\begin{align*}
&{\rm(i)} \hspace{4mm} \beta_{d}<\beta_{inf},\\
&{\rm(ii)} \hspace{3mm} \beta_{d}=\beta_{inf}, \hspace{2mm} I_d=(\beta_d, 1] \hspace{2mm} and \hspace{2mm} \beta_{inf} \in I_{inf},\\
&{\rm(iii)} \hspace{2mm} \beta_{d}=\beta_{inf}, \hspace{2.2mm} I_d=[\beta_d, 1] \hspace{2mm} and \hspace{2mm} \beta_{inf} \not\in I_{inf},\\
&{\rm(iv)} \hspace{2mm} \beta_{d}=\beta_{inf}, \hspace{2.2mm} I_d=[\beta_d, 1] \hspace{2mm} and \hspace{2mm} \beta_{inf} \in I_{inf},\\
&{\rm(v)} \hspace{3mm} \beta_{d}=\beta_{inf}, \hspace{2mm} I_d=(\beta_d, 1] \hspace{2mm} and \hspace{2mm} \beta_{inf} \not\in I_{inf}.
\end{align*}

The result is clearly true for the cases (iv)-(v), and thus it suffices to prove $\mathcal P(I_d\setminus I_{inf})=0$, and $\mathcal P(\{\beta_d \})=\mathcal P(\{\beta_{inf}\})=0$ for the cases (i) and (ii), (iii), respectively.\

(i) Assume $\mathcal P(I_d \setminus I_{inf})>0$. Then
\begin{equation}\label{eq536}
\tilde{\psi}(\beta)=0 \quad {\rm for} \hspace{1mm} \beta \in I_d \setminus I_{inf}
\end{equation}
by the definitions of $I_{inf}$ and $\tilde{\psi}$. Note that $\tilde{w}_{k, \beta} \to -\infty$ locally uniformly in $\mathbb R^2$ for $\beta \in I_d \setminus I_{inf}$. On the other hand, $\tilde{\psi}(\beta)=1$ for some $\beta \in I_d \setminus I_{inf}$ by (\ref{eq534}), which contradicts (\ref{eq536}).\

(ii) If $\mathcal P(\{\beta_d\})=\mathcal P(\{\beta_{inf}\})>0$ then $\tilde{\psi}(\beta_d)=\tilde{\psi}(\beta_{inf})=0$ by (\ref{eq534}) and $I_d=(\beta_d, 1]$. On the other hand, $\tilde{\psi}(\beta_d)=\tilde{\psi}(\beta_{inf})>0$ by $\beta_{inf} \in I_{inf}$ as shown for the case $\beta_d >\beta_{inf}$ above, a contradiction.\

(iii) If $\mathcal P(\{\beta_d\})=\mathcal P(\{\beta_{inf}\})>0$ then $\tilde{\psi}(\beta_d)=\tilde{\psi}(\beta_{inf})=1$ by (\ref{eq534}) and $I_d=[\beta_d, 1]$. On the other hand, $\tilde{\psi}(\beta_d)=\tilde{\psi}(\beta_{inf})=0$ by $\beta_{inf} \not\in I_{inf}$ as shown for the case (i) above, a contradiction.
\end{proof}

{\bf Proof of Theorem \ref{th106}.} Let $\mathcal P(d\alpha)$ be as in (\ref{eq136}). First, we consider the case $\sqrt{\tau}/(1+\sqrt{\tau}) <\gamma <1$. Now, we divide this proof as two cases:
\begin{equation*}
\beta_d >\beta_{inf} \quad and \quad \beta_d \leq \beta_{inf}.
\end{equation*}
First, we consider the case $\beta_d >\beta_{inf}$. In this case, by Lemma \ref{lem212} and (\ref{eq534}), we have,
\begin{equation*}
\tau=\frac{\tau^2}{(\tau+(1-\tau)\gamma)^2},
\end{equation*}
that is, $\gamma=\sqrt{\tau}/(1+\sqrt{\tau})$ which is a contradiction to $\gamma > \sqrt{\tau}/(1+\sqrt{\tau})$. Therefore we just consider the case $\beta_d \leq \beta_{inf}$.
Note that we have $\gamma=\beta_{inf}$ and $\gamma \in \mathcal B$. Indeed, if $\gamma<\beta_{inf}$ or $\gamma \not\in \mathcal B$ holds then we can lead a contradiction by the same argument of the case $\beta_d >\beta_{inf}$ thanks to Proposition \ref{prop502}.\\
Since  $\gamma=\beta_{inf}$ and $\gamma \in \mathcal B$ holds, by Proposition \ref{prop502}  we have
\begin{equation} \label{eq538}
\tilde{\psi}(\beta)=\chi_{\{\gamma, 1\}}(\beta)\quad \mathcal P{\mathchar `-}a.e \hspace{1mm} \beta.
\end{equation}
By (\ref{eq538}) and Proposition \ref{pro203}, we obtain the following identity:
\begin{equation*}
\int_{\mathbb R^2}\tilde{f}dy=\overline \lambda \int_{I_+} \beta \chi_{\{\gamma, 1\}}(\beta) \mathcal P(d\beta)=\overline \lambda(\tau+(1-\tau)\gamma).
\end{equation*} 
By Proposition \ref{prop102}, the above identity implies the estimate (\ref{eq122}).\

Next, in the case $0< \gamma < \sqrt{\tau}/(1+\sqrt{\tau})$ we suppose the identity (\ref{eq124}) holds. From this assumption and Proposition \ref{prop102}, the residual vanishing occurs and we have
\begin{equation*}
\overline \lambda=\frac{8\pi}{\big( \tau+(1-\tau)\gamma \big)^2},
\end{equation*}
which is a contradiction to $\overline \lambda=8\pi / \tau< 8\pi /( \tau+(1-\tau)\gamma)^2$. \qed

{
\begin{Rem}\label{rem402}
In the case of  $0< \gamma < \sqrt{\tau}/(1+\sqrt{\tau})$, we use the property of $\overline \lambda$ again for the contradiction.
\end{Rem}
}

\section*{Acknowledgements}
The authors would like to thank Associated Professor Ryo Takahashi from Nara University of Education for valuable comments. This work is supported by JSPS Grant-in-Aid for Scientific Research (A) 26247013.

\appendix
\def\thesection{Appendix}
\section{Proof of Lemma \ref{lem203}}
Given $K>0$, we put
\begin{align*}
I_1(x)&=\int_{D_1} \frac{\log|x-y| - \log(1+|y|) - \log|x|}{\log|x|}f(y)dy,  \\
I_{2, K}(x)&=\int_{D_{2, K}} \frac{\log|x-y| - \log(1+|y|) - \log|x|}{\log|x|}f(y)dy, \\
I_{3, K}(x)&=\int_{D_{3, K}} \frac{\log|x-y| - \log(1+|y|) - \log|x|}{\log|x|}f(y)dy, \\
\end{align*}
where,
\begin{align*}
D_1&=D_1(x) \equiv \{y \in \mathbb R^2 \mid |y-x|<1 \},\\
D_{2, K}&=D_{2, K}(x) \equiv \{y \in \mathbb R^2 \mid |y-x|>1, |y| \leq K \},\\
D_{3, K}&=D_{3, K}(x) \equiv \{y \in \mathbb R^2 \mid |y-x|>1, |y| > K \}.
\end{align*}
Then it holds that
\begin{equation*}
\frac{z(x)}{\log |x|}-\frac{\beta_0}{2\pi}=\frac{1}{2\pi}(I_1(x)+I_{2, K}(x)+I_{3, K}(x)).
\end{equation*}
We have only to show that each $\epsilon>0$ admits $K_{\epsilon}$ and $L_{\epsilon}$ such that
\begin{equation} \label{A001}
|I_1(x)|+|I_{2, K}(x)|+|I_{3, K}| \le \epsilon
\end{equation}
for all $x \in \mathbb R^2 \setminus B_{L_{\epsilon}}$.\

Since
\begin{equation*}
\frac{\log(1+|y|)+\log|x|}{\log|x|} \leq \frac{\log(2+|x|)+\log|x|}{\log|x|}\leq 3, \quad x \in \mathbb R^2 \setminus B_2, \quad y \in D_1(x),
\end{equation*}
we have,
\begin{align} \label{A003}
|I_1(x)| & \leq 3 \int_{D_1} f(y) dy+ \frac{1}{\log |x|}  \int_{D_1} f(y) \log|x-y| dy \nonumber \\
&\leq 3 \int_{D_1} f(y) dy+ \frac{\| f \|_{\infty}}{\log |x|}  \int_{D_1} f(y) \log|y| dy \to 0
\end{align}
uniformly as $|x| \to +\infty$, recalling $f \in L^1 \cap L^{\infty} (\mathbb R^2)$.\

Next, we have
\begin{equation*}
\Bigg| \frac{\log|x-y| - \log(1+|y|) - \log|x|}{\log|x|} \Bigg| \leq \frac{1}{\log|x|} \Bigg\{ \log(1+K)+\log\frac{|x-y|}{|x|} \Bigg\}
\end{equation*}
for $x \in \mathbb R^2 \setminus B_2$ and $y \in D_{2, K}(x)$, and thus
\begin{equation} \label{A005}
| I_{2, K}(x) | \leq \int_{D_{2, K}(x)} \Bigg\{ \log(1+K)+\log\frac{|x-y|}{|x|} \Bigg\}f(y)dy
\end{equation}
for $x \in \mathbb R^2 \setminus B_2$. From
\begin{equation*}
\frac{1}{2+|x|} \leq \frac{|x-y|}{1+|y|} \leq 1+|x|, \quad x \in \mathbb R^2, \hspace{2mm}|y-x| \geq 1,
\end{equation*}
we derive
\begin{equation*}
\Bigg| \frac{\log|x-y| - \log(1+|y|) - \log|x|}{\log|x|} \Bigg| \leq 3, \quad x \in \mathbb R^2 \setminus B_2 , \hspace{2mm}|y-x| \geq 1,
\end{equation*}
to obtain
\begin{equation} \label{A007}
| I_{3, K}(x) | \leq 3\int_{D_{3, K}(x)} f(y) dy \leq 3\int_{\mathbb R^1 \setminus B_K} f(y) dy
\end{equation}
for $x \in \mathbb R^2 \setminus B_2$.

Recalling $0\leq f \in L^1(\mathbb R^2)$, let $\epsilon_0 >0$ be given. From (\ref{A007}), there exists $K_0>0$ such that
\begin{equation*}
|I_{3, K}(x) | \leq \epsilon_0
\end{equation*}
for all $K \geq K_0$ and $x \in \mathbb R^2 \setminus B_2$. Next, by (\ref{A003}) any $K>0$ admits $L_K>0$ such that
\begin{equation*}
|I_{2, K}(x) | \leq \epsilon_0
\end{equation*}
for all $x \in \mathbb R^2 \setminus B_{L_{K_0}}$, and therefore
\begin{equation}  \label{A009}
|I_{2, K_0}(x) | +|I_{3, K_0}(x) |\leq 2\epsilon_0
\end{equation}
for all $x \in \mathbb R^2 \setminus B_{L_{K_0}}$.\

Thus we obtain (\ref{A001}) by (\ref{A003}) and (\ref{A009}). \qed

\end{document}